\documentclass[12pt]{amsart}

\usepackage{amssymb, a4wide, mathdots,url}
\usepackage{verbatim}
\usepackage[all]{xy}

\usepackage{braket}

\usepackage[usenames]{xcolor}

\usepackage[colorlinks,pagebackref=true]{hyperref}
\hypersetup{colorlinks,linkcolor={blue},citecolor={blue},urlcolor={red}}

\newcommand{\supp}{{\rm Supp}}
\newcommand{\Alg}{\Pi}

\newcommand{\ip}{{\bf i}}
\newcommand{\triv}{\mathbf{1}}
\newcommand{\ms}{{\rm MS}}
\newcommand{\fin}{{\rm fin}}
\newcommand{\zz}{\mathbb Z}
\newcommand{\pp}{\mathcal P}

\newcommand{\rec}{{\rm rec}}
\newcommand{\aaa}{\mathcal A}
\newcommand{\weil}{\mathcal G}

\newcommand{\cO}{\mathcal O}

\newcommand{\Z}{\mathbb{Z}}

\newcommand{\wnu}{\nu}

\newcommand{\mult}{\mathfrak{m}}

\newcommand{\Hom}{\operatorname{Hom}}
\renewcommand{\subset}{\subseteq}

\newcommand{\GL}{\operatorname{GL}}
\newcommand{\Sp}{\operatorname{Sp}}

\newtheorem{theorem}{Theorem}[section]
\newtheorem{lemma}[theorem]{Lemma}

\newtheorem{prop}[theorem]{Proposition}
\newtheorem{remark}[theorem]{Remark}

\newtheorem{definition}[theorem]{Definition}
\newtheorem{defn}[theorem]{Definition}

\newtheorem{corr}[theorem]{Corollary}

\newtheorem{example}[theorem]{Example}
\newtheorem{hypothesis}[theorem]{Hypothesis}

\title {Models of Representations and Langlands Functoriality}

\author{Arnab Mitra}
\address{Tata Institute of Fundamental Research, Mumbai, India}
\email{00.arnab.mitra@gmail.com}
\author{Eitan Sayag}
\address{Department of Mathematics, Ben-Gurion University of the Negev ,  P.O.B. 653,
B'eer Sheva 84105, Israel}
\email{eitan.sayag@gmail.com}

\thanks{Arnab Mitra, partially supported by postdoctoral fellowships funded by the Department of Mathematics, Technion.}

\begin{document}

\setcounter{tocdepth}{1}
\keywords{Local Langlands correspondence, Klyachko models, degenerate Whittaker models}
\subjclass[2010]{22E50 (Primary); 11F70 (Secondary)}

\begin{abstract}
In this article we explore the interplay between two generalizations of the Whittaker model, namely the Klyachko models and the degenerate Whittaker models, and two functorial constructions, namely base change and automorphic induction, for the class of unitarizable and ladder representations of the general linear groups. 
\end{abstract}

\maketitle
\tableofcontents

\section{Introduction}
Let $F$ be a non-archimedean local field. Let $G$ be a quasi-split reductive group with a Borel subgroup $B$ defined over $F$. Let $U$ denote the unipotent radical of $B$ and $\psi$ a fixed non-degenerate character of it. A smooth irreducible representation $(\pi,V)$ of $G$ is said to have a Whittaker model, or to be generic, if there exists a non-trivial linear functional $\ell$ on $V$ such that $\ell(\pi(u)v)=\psi(u)\ell(v)$ for all $u\in U$ and $v\in V$. The importance of Whittaker model in the theory of automorphic forms cannot be overstated. However, not every irreducible unitarizable representation of $G$ admits a Whittaker model. To overcome this one needs to consider other models which contain non-generic irreducible representations. 

In the current article we focus on representations of the general linear groups and two families of models containing the Whittaker model-the degenerate Whittaker models and the Klyachko models. A degenerate Whittaker model is defined by allowing the character $\psi$ of $U$ in the definition of the Whittaker model to be arbitrary. They were introduced and studied in \cite[\S 8.3]{Z} by A. V. Zelevinsky. In particular, he showed that given any irreducible representation of $\GL_{n}(F)$, the representation admits a degenerate Whittaker model and does so with multiplicity one. 

The second family of models were introduced by M. J. Heumos and S. Rallis  in \cite{HR} (see \S \ref{defkt} for the definitions), inspired by the work of A. A. Klyachko for the groups $\GL_{n}(\mathbb F_{q})$ (where $\mathbb F_{q}$ is the finite field with $q$ elements). Although they provided examples of irreducible representations that do not admit any Klyachko model, there are many non-generic irreducible representations that do. For instance every unitarizable representation of $\GL_{n}(F)$ admits a Klyachko model (see \cite[Theorem 3.7]{OS1}). It was shown in \cite[Theorem 1]{OS2} that any irreducible representation which admits a Klyachko model, admits a unique Klyachko model and with multiplicity one. Thus to any irreducible representation $\pi$ of $\GL_{n}(F)$ which admits a Klyachko model, we can assign a unique integer between $0$ and $n$ indicating the precise Klyachko model it admits. We denote this integer by $r(\pi)$ and call it the Klyachko type of $\pi$. 

The local Langlands correspondence gives a bijection between the set of equivalence classes of  irreducible admissible representations of ${\rm GL}_{n}(F)$ and the set of equivalence classes of  $n$-dimensional Weil-Deligne representations of the Weil group of $F$. Let $E$ be a finite Galois extension of $F$ of degree $d$. Denote by $\mathcal A_{F}(n)$ and $\mathcal A_{E}(n)$ the set of all equivalence classes of irreducible representations of ${\rm GL}_{n}(F)$ and ${\rm GL}_{n}(E)$ respectively. The Weil-Deligne group of $E$, denoted by $W_{E}'$,  naturally sits as a subgroup of the Weil-Deligne group of $F$, $W_{F}'$. Via the correspondence, one can assign an irreducible representation of the general linear group over one field to a given irreducible representation of the general linear group over the other, by employing functorial constructions on the corresponding Weil-Deligne representations. In this paper, we deal with two such constructions. The base change map ${\rm bc}_{E/F}: \mathcal A_{F}(n)\to \mathcal A_{E}(n)$ is obtained by restricting the corresponding Weil-Deligne representation of the Weil group of $F$. On the other hand, the automorphic induction map ${\rm ai}_{E/F}: \mathcal A_{E}(n)\to \mathcal A_{F}(dn)$ is obtained by inducing up the corresponding $n$-dimensional Weil-Deligne representation of the Weil group of $E$. In \cite{AC} J. Arthur and L. Clozel investigated the first map while G. Henniart and R. Herb investigated the second in \cite{HH}.

In this paper, we investigate the effect of base change and automorphic induction on the two generalizations of the Whittaker model mentioned above-the degenerate Whittaker models and the Klyachko models, for certain classes of irreducible admissible representations. 

\subsection{Main results}
We now describe our main findings in more detail. Henceforth in this section we fix a Galois extension $E/F$ of non-archimedean local fields of characteristic different than two such that $d=[E:F]$ is prime.

Before we state our main results we need to introduce some more terminology. Call an irreducible representation {\it rigid} if it is supported on a single cuspidal line (see Definition \ref{def: cusp supp_1}). For $\pi\in \mathcal A_{F}(n)$, two partitions of the integer $n$ were defined in \cite{OS} and \cite{Z} respectively which we denote by $\mathcal V(\pi)$ and ${\bf d}(\pi)$ (see Definition \ref{defn: sl2} and \S \ref{sss:def_depth} for the respective definitions). The partition $\mathcal V(\pi)$ is called the $SL(2)$-type of the representation $\pi$. We begin with the following result which investigates its effect on the ${\rm bc}_{E/F}$ and ${\rm ai}_{E/F}$ maps.
\begin{theorem}[See Theorem \ref{mainth}]\label{mainth_3}
\begin{enumerate}
\item Let $\pi\in \mathcal A_{F}(n)$ be a rigid representation. Then $\mathcal V(\pi)=\mathcal V({\rm bc}_{E/F}(\pi))$.
\item Let $\Pi\in \mathcal A_{E}(n)$ be a rigid representation. Then $d\mathcal V(\Pi)=\mathcal V({\rm ai}_{E/F}(\Pi))$.
\end{enumerate}
\end{theorem}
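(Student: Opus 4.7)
The plan is to translate both parts of the theorem, via the local Langlands correspondence, into statements about the $SL_2(\C)$-part of the Weil--Deligne parameter.

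First I would unpack Definition~\ref{defn: sl2}: if $\pi\in\mathcal A_F(n)$ has parameter $\phi_\pi \cong \bigoplus_i \tau_i \boxtimes \Sym^{n_i-1}$ with $\tau_i$ irreducible representations of $W_F$, then $\mathcal V(\pi)$ should be the partition of $n$ in which the part $n_i$ appears with multiplicity $\dim\tau_i$; equivalently, it records the Jordan blocks of $\phi_\pi$ restricted to $SL_2(\C)$. Rigidity of $\pi$ is precisely the condition that on the Galois side all the $\tau_i$ are unramified twists of a single irreducible $\tau_0$ of $W_F$, and in particular $\dim\tau_i=\dim\tau_0$ for all $i$.

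For (1), restriction from $W_F$ to $W_E$ commutes with the $SL_2(\C)$-factor, so $\phi_{\mathrm{bc}_{E/F}(\pi)}\cong \bigoplus_i (\tau_i|_{W_E})\boxtimes \Sym^{n_i-1}$; further decomposing each $\tau_i|_{W_E}$ into $W_E$-irreducibles does not change the total $W_E$-dimension $\dim\tau_i$ attached to the block $\Sym^{n_i-1}$, so the resulting partition of $n$ coincides with $\mathcal V(\pi)$. For (2), induction also commutes with the $SL_2(\C)$-factor, giving $\phi_{\mathrm{ai}_{E/F}(\Pi)}\cong \bigoplus_i (\Ind_{W_E}^{W_F}\tau_i)\boxtimes \Sym^{n_i-1}$. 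Since $\dim\Ind_{W_E}^{W_F}\tau_i=d\cdot\dim\tau_i$, the block $\Sym^{n_i-1}$ now appears with multiplicity $d\cdot\dim\tau_i$, which is exactly $d$ times its multiplicity for $\Pi$. This is precisely the assertion $d\mathcal V(\Pi) = \mathcal V(\mathrm{ai}_{E/F}(\Pi))$.

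The main obstacle is not the Galois-side combinatorics---which, once the dictionary above is in place, is essentially formal---but rather establishing that dictionary. In \cite{OS} the invariant $\mathcal V$ is defined on the representation-theoretic side via Zelevinsky's multisegment classification, so one must match a segment $[a,b]_\rho$ with the summand $(\mathrm{rec}(\rho)\otimes\nu^{(a+b)/2})\boxtimes \Sym^{b-a}$ of the parameter and sum over segments in the multisegment of $\pi$. Rigidity ensures this dictionary is uniform across the multisegment (a single $\rho$, and hence a single $\mathrm{rec}(\rho)$ up to unramified twist), which is what makes the translation between the two sides mechanical and handles both parts of the theorem in a single stroke.
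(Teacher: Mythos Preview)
Your argument has a genuine gap, and it lies exactly where you say ``the main obstacle is not the Galois-side combinatorics.'' In fact the Galois-side combinatorics you carry out is correct, but it proves the wrong statement, because you have misread Definition~\ref{defn: sl2}. That definition sets
\[
\mathcal V(\pi)=P_F\bigl(\rec_F(\pi^{t})\bigr),
\]
i.e.\ the Jordan partition of the parameter of the \emph{Zelevinsky involute} $\pi^{t}$, not of $\pi$ itself. Your unpacking identifies $\mathcal V(\pi)$ with the $SL_2(\C)$-content of $\phi_\pi$; with the paper's conventions this is $P_F(\rec_F(\pi))$, which is a different partition in general. A quick sanity check: your argument for~(1) nowhere uses rigidity in an essential way, so if it were valid it would prove $\mathcal V(\pi)=\mathcal V(\mathrm{bc}_{E/F}(\pi))$ for \emph{all} $\pi$. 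The paper's example immediately after Theorem~\ref{mainth}, with $\pi=L([1,\nu_F],[\kappa\nu_F^2,\kappa\nu_F^3])$, shows this fails. For that $\pi$ one does have $P_F(\rec_F(\pi))=P_E(\rec_E(\mathrm{bc}_{E/F}(\pi)))$, exactly as your computation predicts; what breaks is that $\mathrm{bc}_{E/F}(\pi)^{t}\ne\mathrm{bc}_{E/F}(\pi^{t})$ here, so the partitions attached to the involutes differ.

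The missing ingredient is precisely Proposition~\ref{mainpr}: for rigid $\pi$ one has $\mathrm{bc}_{E/F}(\pi^{t})=\mathrm{bc}_{E/F}(\pi)^{t}$ (and similarly for $\mathrm{ai}_{E/F}$). Granting that, your observation that restriction/induction commute with the $SL_2(\C)$-factor gives $P_F(\rec_F(\pi^{t}))=P_E(\rec_E(\mathrm{bc}_{E/F}(\pi^{t})))=P_E(\rec_E(\mathrm{bc}_{E/F}(\pi)^{t}))$, which is the desired equality $\mathcal V(\pi)=\mathcal V(\mathrm{bc}_{E/F}(\pi))$; this is exactly the paper's two-line proof of Theorem~\ref{mainth}. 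Proposition~\ref{mainpr} in turn rests on Lemma~\ref{strucpres} together with the fact (from \cite{MW}) that the M\oe{}glin--Waldspurger algorithm for $\mathfrak m\mapsto\mathfrak m^{t}$ is insensitive to the choice of cuspidal line---and this is where rigidity is genuinely used. So your Galois-side computation is the easy half; the Zelevinsky involution, which you have suppressed, is where the content lives.
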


The degenerate Whittaker model that an irreducible representation $\pi$ admits as per the prescription in \cite{Z} is with respect to the composition ${\bf d}(\pi)$. Using Theorem \ref{mainth_3} we get the following relationship between degenerate Whittaker models and the two maps.
\begin{theorem}[See Theorem \ref{deg_whit_func}]\label{main_deg_whit_func_1}
\begin{enumerate}
\item Let $\pi\in \mathcal A_{F}(n)$ be a rigid representation. Then ${\rm bc}_{E/F}(\pi)$ has a degenerate Whittaker model given by the depth sequence ${\bf d}(\pi)$.
\item Let $\Pi\in \mathcal A_{E}(n)$ be a rigid representations. Then ${\rm ai}_{E/F}(\pi)$ has a degenerate Whittaker model given by the depth sequence $\underbrace{\operatorname{{\bf d}(\pi)+_{c}\cdots+_{c}{\bf d}(\pi)}}\limits_{d-\text{times}}$.
\end{enumerate}
\end{theorem}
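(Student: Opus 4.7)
The plan is to deduce the statement immediately from Theorem~\ref{mainth_3} together with Zelevinsky's existence-and-uniqueness theorem for degenerate Whittaker models (recalled in the introduction), which asserts that every irreducible representation $\sigma$ of $\GL_m(F)$ admits a unique degenerate Whittaker model, and that the composition associated with this model is ${\bf d}(\sigma)$. Since Theorem~\ref{mainth_3} already identifies the $SL(2)$-type $\mathcal V$ of the image under ${\rm bc}_{E/F}$ and ${\rm ai}_{E/F}$, the remaining work is a purely combinatorial translation from $\mathcal V$ to ${\bf d}$.

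For this translation I would first record the elementary dictionary between $\mathcal V(\sigma)$ and ${\bf d}(\sigma)$ that follows from Definition~\ref{defn: sl2} and \S\ref{sss:def_depth}: the two invariants are related by conjugation of partitions, so that (a) the equality $\mathcal V(\sigma_1)=\mathcal V(\sigma_2)$ is equivalent to ${\bf d}(\sigma_1)={\bf d}(\sigma_2)$, and (b) the operation on partitions that sends $\mathcal V$ to $d\mathcal V$ (repeating each part $d$ times) corresponds, on the depth-sequence side, to iterating $+_c$ exactly $d$ times. Both assertions are routine from the definitions of $\mathcal V$, ${\bf d}$, and $+_c$.

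The two parts of the theorem are then immediate. For (1), the rigidity of $\pi$ and Theorem~\ref{mainth_3}(1) give $\mathcal V({\rm bc}_{E/F}(\pi))=\mathcal V(\pi)$, which by (a) upgrades to ${\bf d}({\rm bc}_{E/F}(\pi))={\bf d}(\pi)$; applying Zelevinsky's theorem to the irreducible representation ${\rm bc}_{E/F}(\pi)$ then yields the desired model. For (2), Theorem~\ref{mainth_3}(2) gives $\mathcal V({\rm ai}_{E/F}(\Pi))=d\mathcal V(\Pi)$, which by (b) becomes
\[
{\bf d}({\rm ai}_{E/F}(\Pi))= \underbrace{{\bf d}(\Pi)+_c\cdots+_c{\bf d}(\Pi)}_{d\text{ times}},
\]
and a second invocation of Zelevinsky's theorem completes the proof.

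The main obstacle is not in the reductions, which are formal once Theorem~\ref{mainth_3} is in hand, but in the preliminary combinatorial check: one must verify that the convention for $\mathcal V \mapsto d\mathcal V$ in Theorem~\ref{mainth_3}(2) is precisely the partition operation dual, under conjugation, to iterating $+_c$, so that the two sides of the claimed equality actually line up. A secondary technical point is that Zelevinsky's uniqueness is being applied to ${\rm bc}_{E/F}(\pi)$ and ${\rm ai}_{E/F}(\Pi)$, which requires these to be irreducible under the rigidity hypothesis; this is standard for a prime-degree cyclic extension, but deserves an explicit remark or citation at the start of the proof.
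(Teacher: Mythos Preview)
Your proposal is correct and follows essentially the same route as the paper: invoke Theorem~\ref{mainth_3}, translate from $\mathcal V$ to ${\bf d}$ via the conjugation relation $\mathcal V(\sigma)={\bf d}(\sigma)^t$ (this is precisely Lemma~\ref{sl_depth} in the paper, not merely a consequence of the definitions as you suggest---it uses \cite[Theorem~8.1]{Z} and Lemma~\ref{lem_def_part}), and then appeal to Zelevinsky's existence theorem. Your concern about irreducibility of ${\rm bc}_{E/F}(\pi)$ and ${\rm ai}_{E/F}(\Pi)$ is unnecessary: by construction (\S\ref{defnbc}--\S\ref{defnai}) these maps are defined via $\rec^{-1}$ and hence always land in $\mathcal A_E(n)$ and $\mathcal A_F(dn)$ respectively.
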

(Here the composition $\underbrace{\operatorname{{\bf d}(\pi)+_{c}\cdots+_{c}{\bf d}(\pi)}}\limits_{d-\text{times}}$ denotes the composition of $nd$ obtained by coordinate wise adding $d$ copies of the composition ${\bf d}(\pi)$.)

{\it Ladder} representations (see \S \ref{sss: ladder} for the definition) are a class of irreducible representations of general linear groups over non-archimedean local fields. The building blocks of the unitarizable dual of the general linear groups, the so called Speh representations (see \S \ref{sss:tadic}), constitute a subset of the ladders. Recall from above that to any representation $\pi\in \mathcal A_{F}(n)$ that admits a Klyachko model, we assign a unique integer $r(\pi)$ ($0\leq r(\pi)\leq n$) indicating the precise Klyachko model $\pi$ admits. Next we have the following relationship between the Klyachko models and the two maps.
\begin{theorem}[See Theorem \ref{Klyachkoai}]\label{Klyachkoai_3}
\begin{enumerate}
\item Let $\pi\in\mathcal A_{F}(n)$ be a ladder representation. Then $\pi$ admits a Klyachko model if and only if ${\rm bc}_{E/F}(\pi)$ admits one. Moreover 
\[
r({\rm bc}_{E/F}(\pi))=r(\pi).
\]  
\item Let $\Pi\in\mathcal A_{E}(n)$ be a ladder representation. Then $\Pi$ admits a Klyachko model if and only if ${\rm ai}_{E/F}(\Pi)$ admits one. Moreover $ r({\rm ai}_{E/F}(\Pi))=dr(\Pi)$.  
\end{enumerate}
\end{theorem}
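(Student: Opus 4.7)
The plan is to combine a combinatorial description of the Klyachko type $r(\pi)$ for a ladder representation $\pi$ with the known transformation of segment-level data under ${\rm bc}_{E/F}$ and ${\rm ai}_{E/F}$. Since ladder representations are rigid, Theorem \ref{mainth_3} applies, so we already have control on the $SL(2)$-type $\mathcal V$. The idea is to leverage this control, together with a segment-theoretic formula for $r$, to derive the two equalities along with the claimed \emph{iff} assertions.

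The first step is to establish -- using the machinery developed in the preceding sections of the paper together with the results of \cite{OS1, OS2} -- an explicit formula for $r(\pi)$ when $\pi = L(\Delta_1,\ldots,\Delta_k)$ is a ladder, expressed purely in terms of the ordered multisegment $(\Delta_1,\ldots,\Delta_k)$ (presumably in terms of lengths and parities of consecutive segments, or equivalently in terms of $\mathcal V(\pi)$ together with the cuspidal support line). In the same step I would record the combinatorial criterion characterising which ladders admit a Klyachko model at all, again in segment-theoretic terms.

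Second, I would describe the effect of ${\rm bc}_{E/F}$ and ${\rm ai}_{E/F}$ on ladders at the level of segment data. For base change, the cuspidal line supporting $\pi$ is either preserved (when the underlying Langlands parameter remains $W_E'$-irreducible) or is replaced by a Galois-symmetric collection (when the parameter becomes reducible on restriction), but in either case the induced ordered multisegment retains the same segment lengths and the same interlacing/dominance pattern. For automorphic induction, each cuspidal $\sigma$ of ${\rm GL}_m(E)$ contributes either to a cuspidal of ${\rm GL}_{dm}(F)$ or, in the Gal-fixed case, to a twist-equivalent orbit; each segment of length $\ell$ on the $E$-side produces a segment of length $\ell$ on the enlarged cuspidal line on the $F$-side, with the overall dimensional weight scaled by $d$. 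Feeding these descriptions into the formula from the first step yields, for (1), the equality $r({\rm bc}_{E/F}(\pi))=r(\pi)$ together with the equivalence of Klyachko-admissibility, and, for (2), the equality $r({\rm ai}_{E/F}(\Pi))=d\,r(\Pi)$ together with the analogous equivalence.

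The main obstacle is the first step: obtaining the explicit segment-theoretic formula for $r(\pi)$ on ladders and the segment-theoretic admissibility criterion. This is likely where most of the technical work lies, building on the uniqueness/existence results of \cite{OS2} and a careful analysis via the Jacquet module and cuspidal-support machinery specific to ladders. A secondary difficulty is to verify that the ladder structure itself is preserved under ${\rm bc}_{E/F}$ and ${\rm ai}_{E/F}$, which requires a case analysis distinguishing the Gal-regular from the Gal-fixed behaviour of the underlying cuspidals on $W_F'$ versus $W_E'$. Theorem \ref{mainth_3} controls the $SL(2)$-type invariant consistently on both sides and will serve both as a template for the analysis and as a consistency check throughout.
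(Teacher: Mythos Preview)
Your overall strategy---obtain a segment-level criterion for Klyachko admissibility and for the value of $r(\pi)$ on ladders, then track segment data through ${\rm bc}_{E/F}$ and ${\rm ai}_{E/F}$---is exactly the skeleton of the paper's argument. However, you substantially overestimate the work required and take an unnecessary detour through $SL(2)$-types.

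The ``main obstacle'' you identify (your first step) is already available off the shelf: Theorem~\ref{thm: kly lad}, quoted from \cite{MOS}, gives precisely the segment-theoretic criterion for which ladders admit a Klyachko model and computes $r(\pi)$ via the right-alignment relations $\Delta_{t-2i}\vdash_{r_i}\Delta_{t-2i-1}$. You propose to redevelop this from \cite{OS1,OS2} and Jacquet-module analysis; there is no need. Your second step is likewise simpler than you suggest: Lemma~\ref{strucpres} handles all cases uniformly (no Gal-regular/Gal-fixed case split is required), giving ${\rm bc}_{E/F}(\pi_{(\sigma)}) = \pi_{(\sigma_1)}\times\cdots\times\pi_{(\sigma_t)}$ with the $\sigma_i$ on pairwise disjoint cuspidal lines. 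Since the criterion of Theorem~\ref{thm: kly lad} depends only on the shape of the multisegment and not on which cuspidal line it sits on, each $\pi_{(\sigma_i)}$ inherits the Klyachko behaviour of $\pi$ with $r(\pi_{(\sigma_i)}) = r(\pi)/t$; the hereditary property \cite[Proposition~13.3]{MOS} gives the forward direction, and \cite[Proposition~13.4]{MOS} (for factors on disjoint lines) gives the converse. That is the paper's entire proof.

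Finally, the paper explicitly avoids routing this result through Theorem~\ref{mainth_3}: the invariant $\mathcal V(\pi)$ by itself does not detect whether a ladder admits a Klyachko model, so it cannot carry the \emph{iff} assertion. Your plan already concedes this by asking for a separate admissibility criterion, at which point the $SL(2)$-type becomes a consistency check rather than an ingredient.
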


While Theorem \ref{Klyachkoai_3} shows that the two maps ``preserve" the Klyachko type of a representation in the ladder class if it exists, we study yet another indicator of compatibility. Let us consider the case of the base change map and let $\Pi\in \mathcal A_{E}(n)$ be a ladder representation in the image of the map, which admits a Klyachko model. Any rigid representation in the fiber of $\Pi$ also admits the Klyachko model of the same type as $\Pi$ (by Lemma \ref{strucpres}(1) and Theorem \ref{thm: kly lad}) although there are many non rigid representations in the fiber which admit a different Klyachko model or none at all. Thus, given a representation satisfying the conditions that we imposed on $\Pi$ above, one might ask what proportion of the representations in its fiber admit the corresponding Klyachko model. Our next result analyses this question. For the sake of simplicity, we only state a special case of our result here, and just for the base change map. We refer the reader to Theorem \ref{prop_fiber:maps_all} for the result in its full generality and for its automorphic induction analogue.
    
\begin{theorem}[See Lemma \ref{fiber:maps}, Remark \ref{rem_fiber:maps}, Lemma \ref{lem_fiber:maps_gen} and Lemma \ref{lem_fiber:maps_symp}]\label{prop_fiber:maps_all_1} 
Suppose that $\Pi=L(\mathfrak m)$ (see \S \ref{sss: Langlands_class} for the notation) is a rigid  representation of $\GL_{n}(E)$ such that it is in the image of the base change map. Denote by $s$ the size of the multi-set $\mathfrak m$. 
Then we have the following:
\begin{enumerate}
\item The set ${\rm bc}_{E/F}^{-1}(\Pi)$ has cardinality $d^{s}$.
\item The representation $\Pi$ is generic if and only if every element in its fiber is so. 
\item Further suppose that $\Pi$ is a ladder representation. If it has a symplectic model, then the number of representations in its fiber that admit a symplectic model is $d^{\frac{s}{2}}$.
\end{enumerate}
\end{theorem}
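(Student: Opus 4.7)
The plan is to translate the problem to the Langlands parameter side, where ${\rm bc}_{E/F}$ corresponds to restriction of Weil--Deligne representations from $W_F'$ to $W_E'$, and $\Pi = L(\mathfrak m)$ with $\mathfrak m = \{\Delta_1,\ldots,\Delta_s\}$ corresponds to the direct sum $\bigoplus_{i=1}^{s}\phi_{\Delta_i}$ of the irreducible Weil--Deligne parameters attached to the segments. Rigidity of $\Pi$ means all $\phi_{\Delta_i}$'s are built from twists of a single irreducible representation $\sigma_E$ of $W_E$ (corresponding to a cuspidal $\rho_E$); the fact that $\Pi$ lies in the image of ${\rm bc}_{E/F}$ forces $\sigma_E$ to be $\operatorname{Gal}(E/F)$-invariant, so $\sigma_E$ extends to $W_F$ in exactly $d$ inequivalent ways, any two differing by a twist by a character of $W_F/W_E\simeq\Z/d\Z$.

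For part (1), an element of ${\rm bc}_{E/F}^{-1}(\Pi)$ is an extension of $\bigoplus_i \phi_{\Delta_i}$ to $W_F'$. Since each summand is irreducible and Galois-invariant, its extensions to $W_F'$ are obtained by independently choosing one of $d$ extensions of the underlying cuspidal parameter (the special factor of the segment is intrinsic and preserved). Distinct choices give inequivalent extensions, since they can already be distinguished summand by summand after restriction, yielding $|{\rm bc}_{E/F}^{-1}(\Pi)| = d^s$.

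For part (2), genericity of an irreducible $L(\mathfrak m')$ is equivalent to pairwise non-linkage of the segments in $\mathfrak m'$. Writing a lift $\pi$ as $L(\{\tilde\Delta_1,\ldots,\tilde\Delta_s\})$, two lifts on distinct cuspidal lines of $\GL_n(F)$ are automatically unlinked, while two lifts $\tilde\Delta_i,\tilde\Delta_j$ on the same lifted line are linked iff $\Delta_i,\Delta_j$ are linked in $\mathfrak m$ (the twist exponents are unchanged by the lifting). Thus a lift fails to be generic iff some pair of segments of $\mathfrak m$ is linked, iff $\Pi$ itself is not generic; equivalently all lifts are generic precisely when $\Pi$ is generic.

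For part (3), we invoke the classification of ladder representations admitting a symplectic model established earlier in the paper, which attaches to such a ladder a canonical pairing of its $s$ (even) segments into $s/2$ pairs of consecutive segments, each pair forming a Speh representation of length two. A lift $\pi$ is an induced product, across the cuspidal lines chosen for each segment, of ladder representations on each line; by the multiplicativity of Klyachko models for such products together with Theorem \ref{Klyachkoai_3} applied to each ladder factor, $\pi$ admits a symplectic model iff for every paired couple the two lifts land on the same cuspidal line (in which case that couple contributes a length-two Speh and hence a symplectic piece). The $s/2$ pairs give $d$ independent choices each, producing $d^{s/2}$ symplectic lifts. The main obstacle is precisely part (3): correctly identifying the canonical pairing associated to a symplectic-model-admitting ladder and verifying that on the non-rigid lifted product the existence of a symplectic model is governed exactly by respect for this pairing. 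Parts (1) and (2) reduce to clean statements about extensions of Weil--Deligne parameters across a cyclic prime-degree extension, whereas (3) must mix rigid and non-rigid lifts and lean on the ladder-specific theory developed elsewhere in the paper.
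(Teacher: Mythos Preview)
Your plan is correct and follows essentially the same route as the paper: part~(1) is the content of Lemma~\ref{fiber:maps} together with Remark~\ref{rem_fiber:maps}, part~(2) is Lemma~\ref{lem_fiber:maps_gen} via Zelevinsky's non-linkage criterion, and part~(3) is Lemma~\ref{lem_fiber:maps_symp} via the ladder classification Theorem~\ref{thm: kly lad}. Two small points: in (1) the phrase ``distinguished summand by summand after restriction'' is backwards---the $d^s$ lifts are distinguished \emph{before} restriction, using that the $d$ extensions of $\sigma_E$ to $W_F$ are pairwise inequivalent (this is exactly Remark~\ref{rem_fiber:maps}); and in (3) the appeal to Theorem~\ref{Klyachkoai_3} is an unnecessary detour, since the paper applies Theorem~\ref{thm: kly lad} directly to each ladder factor $L(\mathfrak m_i)$ over $F$ (together with \cite[Lemma~5.9]{MOS} for the converse multiplicativity), and the crucial step---that if $(\Delta_1)_{(\sigma')}\in\mathfrak m_i$ then necessarily $(\Delta_2)_{(\sigma')}\in\mathfrak m_i$---is an induction using that $\mathfrak m$ is a ladder so $\nu^{-1}\Delta_1$ can equal no segment of $\mathfrak m$ other than $\Delta_2$.
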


\subsection{Context and related works}\label{ss:rel_works}
The class of ladder representations play an important role in this article. This class of irreducible representations was introduced and studied by E. Lapid and A. M\'inguez in \cite{LM}. They proved several results on the structural properties of the standard modules of ladder representations, which makes them easier to deal with than general irreducible representations. For instance a very useful tool at ones disposal when working with ladders representations (but one that is not available for general irreducible representations) is an explicit description of their Jacquet modules. This was obtained in \cite{KL12} using the results of \cite{LM}. At the same time the ladder class consists of many interesting examples of representations, for instance the Speh representations, as mentioned earlier. This makes them an ideal class of representations to test the plausibility of conjectures for the entire admissible dual. The classification of ladder representations with respect to the Klyachko models was recently obtained in \cite{MOS}.

The result on compatibility of base change and Klyachko models for the class of unitarizable representations was obtained in \cite{OS}. There it was shown that the $SL(2)$-type of a unitarizable representation is preserved under the operation of base change. This statement was then used to show that Klyachko types of unitarizable representations are invariant under base change. 

We obtain here independent proofs of the main results of \cite{OS}. Moreover, we also obtain the corresponding results for the automorphic induction map. We prove the statement about $SL(2)$-type for all rigid representations. However in this paper $SL(2)$-type does not play a role in the proof of the results underlying the connection between Klyachko models and base change. Instead we directly prove that base change preserves Klyachko type for ladder representations. The fact that any unitarizable representation can be obtained by inducing up Speh representations is then used to prove the statement for the unitarizable class. 

We remark that in a similar vein to this article the interplay of models of representations and base change was also studied in \cite{MV} where the model in question was a special case of the so called {\it linear models} for general linear groups. 

\subsection{Techniques of the proofs}
Recall that in \cite{Z} Zelevinsky classified the irreducible representations of the general linear groups in terms of the cuspidal representations. We begin by showing that  both the base change and automorphic induction maps are compatible with this classification and commute with the Zelevinsky involution, for the class of rigid representations (see Lemma \ref{strucpres} and Proposition \ref{mainpr} respectively). We show Theorem \ref{mainth_3} using Proposition \ref{mainpr}. In  Lemma \ref{sl_depth} we observe then that, for $\pi\in \mathcal A_{F}$, we have $\mathcal V(\pi)={\bf d}(\pi)^{t}$. This lemma is the non-archimedean analogue of \cite[Theorem 2.4.2]{GOSS}. Theorem \ref{main_deg_whit_func_1} is an easy consequence of Theorem \ref{mainth_3} and  Lemma \ref{sl_depth}. 

The classification results obtained in \cite{MOS} for ladders (Theorem \ref{thm: kly lad} in this article) play a critical role in the proof of Theorem \ref{Klyachkoai_3} which is the central result of this article. It follows directly from Theorem \ref{thm: kly lad} that whether or not a ladder representation admits a Klyachko model is independent of the cuspidal line it is supported on and depends only on the `shape' of the representation. Lemma \ref{strucpres} says that both these maps take a ladder representation to a product of ladders each having the same `shape' and supported on pairwise disjoint cuspidal lines. The proof of Theorem \ref{Klyachkoai_3} is based on this fact. Our proofs for the analogous results for the unitarizable class is based on the results for ladders, as described in \S \ref{ss:rel_works}.  

Theorem \ref{thm: kly lad} is also the key ingredient of the proof of Theorem \ref{prop_fiber:maps_all_1} (and that of the more general Theorem \ref{prop_fiber:maps_all}). We obtain a general description of the fiber of a rigid representation under the two maps in Lemma \ref{fiber:maps} which is then used along with Theorem \ref{thm: kly lad} to demonstrate Theorem \ref{prop_fiber:maps_all_1}.

\subsection{Organization of the paper}
The rest of the paper is organized as follows. In \S \ref{not_pre} we set up some general notation while in \S \ref{s:pre_rep_gl} we review some preliminaries on the irreducible representations of $\GL_{n}(F)$ and the Weil-Deligne representations. In \S \ref{defbcai} we formally define the base change and automorphic induction maps using the reciprocity map and prove some basic results used in the sequel, including their compatibility with the Zelevinsky classification. In \S \ref{s_lad} we recall the preliminaries of ladder representations. In \S \ref{relsl} we demonstrate our results on $SL(2)$-type, namely Theorem \ref{mainth_3} (see Theorem \ref{mainth}). We then use it to study the relationship of the degenerate Whittaker models with the two maps, and in particular, prove Theorem \ref{main_deg_whit_func_1} (see Theorem \ref{deg_whit_func}). In \S \ref{relkts} we prove Theorem \ref{Klyachkoai_3} (see Theorem \ref{Klyachkoai}). In \S \ref{s:fiber} we analyze the fiber of the two maps with respect to the Klyachko models proving a general version of Theorem \ref{prop_fiber:maps_all_1} (see Theorem \ref{prop_fiber:maps_all}) and its automorphic induction analogue.  

\section{Notation}\label{not_pre}
We set some primary notation in this section. More particular notation is defined in the section where it first occurs. 

\subsubsection{}\label{ss:not_pre} 
Let $F$ be a non-archimedean local field of characteristic different than two, $\mathcal O_{F}$ be the ring of integers of $F$, $\mathfrak p_{F}$ be the unique prime ideal of $\mathcal O_{F}$, and $\varpi_{F}$ be a fixed choice of a uniformizer of the prime ideal. Let $q_{F}$ denote the cardinality of its residue field. 

Let $|.|_{F}:F^{\times}\to \mathbb C^{\times}$ denote the standard absolute value normalized so that $|\varpi_{F}|_{F}=q_{F}^{-1}$. The character of $\GL_{n}(F)$ given by $g \mapsto |\det(g)|_{F}$ is denoted by $\nu_{F}$. Let $W_{F}$ and $W_{F}'$ denote the Weil group and the Weil-Deligne group of $F$ respectively. The reciprocity map $T_{F}: W_{F}\to F^{\times}$, which is given by the local class field theory, is normalized such that geometric Frobenius elements are mapped to uniformizers. The map $T_{F}$ defines an isomorphism from the abelianization $W_{F}^{ab}$ of $W_{F}$ to $F^{\times}$. The composition $|.|_{F}\circ T_{F}$ gives the associated absolute value on $W_{F}$ which we denote by $||\cdot||$. We will also denote it sometimes by $\wnu_{F}'(\cdot)$ when we wish to highlight its analogy with the character $\nu_{F}$.

\subsubsection{Classes of representations}
The category of smooth complex valued representations of a topological group $G$ of finite length will be denoted by $\Pi(G)$. Denote by $\mathcal A_{F}(n)$ the class of all irreducible representations in $\Pi({\rm GL}_{n}(F))$ and by $\mathcal A_{F}$, the union $\sqcup _{n\geq 1}\mathcal A_{F}(n)$. Let $\mathcal A_{F}^{\circ}(n)$ and $\mathcal A_{F}^{u}(n)$ be the subset of  $\mathcal A_{F}(n)$ consisting of the cuspidal representations and the unitarizable representations respectively. Further let $\mathcal A_{F}^{\circ}$ and $\mathcal A_{F}^{u}$ denote the corresponding unions. 

\subsubsection{The Bernstein-Zelevinsky product}
Set $G={\rm GL}_{n}(F)$. Let $M$ be the $F$-points of a standard Levi subgroup of $\GL_{n}$. We will denote by $\ip_{G,M}$ the normalized parabolic induction functor from $\Alg(M)$ to $\Alg(G)$. Let $(n_{1},\dots,n_{k})$ be a composition of $n$ and let $\pi_i\in\Alg({\rm GL}_{n_{i}}(F))$, $i=1,\dots,k$.  Assume that $M\cong \Pi_{i=1}^{k}{\rm GL}_{n_{i}}(F)$. Let $\pi:=\pi_1\otimes \cdots \otimes \pi_k$. Then $\pi\in \Alg(M)$. Set
\begin{equation*}
\pi_1\times\cdots \times \pi_k :=\ip_{G,M}(\pi).
\end{equation*}

\subsubsection{Distinguished representations}
This paper is concerned with distinguished representations in the following sense. 
\begin{definition}
Let $\pi$ be a smooth, complex-valued representation of $G$ and $H$ a closed subgroup of $G$. 
\begin{itemize}
\item We say that $\pi$ is $H$-distinguished if the space $\Hom_H(\pi,1)$ of $H$-invariant linear forms on $\pi$ is non-zero.
\item More generally, for a character $\chi$ of $H$ we say that $\pi$ is $(H,\chi)$-distinguished if the space $\Hom_H(\pi,\chi)$ is non-zero.
\end{itemize}
\end{definition}

\subsubsection{Generic representations}\label{sss:gen}
Denote by $U_{n}$ the $F$ points of the unipotent radical of the standard Borel subgroup of ${\rm GL}_{n}$. Let $\psi$ be a fixed non-trivial additive character of $F$. We further denote by $\psi_n$ the character of $U_n$ defined by
\begin{equation*}
\psi_{n}(u)=\psi(\sum_{i=1}^{n-1} u_{i,i+1}), \ u=(u_{i,j})\in U_n. 
\end{equation*}

\begin{defn}
Let $\pi$ be an irreducible representation of ${\rm GL}_{n}(F)$. We say that $\pi$ admits a Whittaker model, or is generic, if it is $(U_{n},\psi_{n})$-distinguished.
\end{defn}

\subsubsection{} We henceforth fix a cyclic extension $E/F$ of non-archimedean local fields of characteristic different than two such that $d=[E:F]$ is prime. Fix $\kappa=\kappa_{E/F}$ to be a character of $F^{\times}$ coming from the local class field theory with kernel equal to ${\rm N}_{E/F}(E^{\times})$ where ${\rm N}_{E/F}$ is the norm map from $E^{\times}$ to $F^{\times}$. Observe that $\nu_{E}(\cdot)=|{\rm N}_{E/F}({\rm det}(\cdot))|_{F}$ due to the normalization of the absolute values mentioned above.  

For $\pi\in \mathcal A_{E}(n)$ and an element $\gamma\in {\rm Gal}(E/F)$, denote the representation $\pi^{\gamma}\in \mathcal A_{E}(n)$ given by $\pi^{\gamma}(g)=\pi(\gamma(g)) \ \forall g\in {\rm GL}_{n}(E)$. 

\subsubsection{Multi-sets and partitions}
Denote by $\triv_\Omega$ the characteristic function of a set $\Omega$. Let $\ms_\fin(\Omega)$ be the set of finite multi-sets of elements in $\Omega$ i.e. the set of functions $f:\Omega \to\zz_{\geq 0}$ of finite support. When convenient we will also denote
$f$ by $\{\omega_1,\dots,\omega_1,\omega_2,\dots,\omega_2,\dots\}$ where $\omega\in \Omega$ is repeated $f(\omega)$ times. Let $\mathcal P=\ms_\fin(\mathbb Z_{>0})$ be the set of partitions of positive integers and let
$$ \mathcal P(n)=\{f \in \pp: \sum_{k=1}^\infty k\,f(k)=n\}$$
denote the subset of partitions of $n$. For $n,\,m \in \zz_{>0}$ let $(n)_m=m\,\triv_n=\{n,\dots,n\}$ be the partition of $nm$ with `$m$ parts of size $n$'. As indicated by the definition above, unless otherwise mentioned, we will not suppose a partition to be ordered. We will sometimes use the word `composition' in this article for an ordered partition.   

\section{Preliminaries on irreducible representations of $\GL_{n}$}\label{s:pre_rep_gl}
We now recall some basics of the representation theory of general linear groups over non-archimedean local fields. In this section $F$ will denote an arbitrary non-archimedean local field.

\subsection{Irreducible representations of $\GL_{n}(F)$}
\subsubsection{} For an irreducible cuspidal $\sigma\in \aaa_{F}^{\circ}$ define its {\it cuspidal line}
\begin{equation*}
\sigma^\Z=\{\nu_{F}^{m}\sigma\mid m\in \mathbb Z\}.
\end{equation*}

\subsubsection{}\label{sss:def_seg} We now recall the combinatorial notion of {\it segments} as introduced by Zelevinsky (in \cite{Z}), and briefly review the classification of $\aaa_{F}$.
\begin{defn}
Given an irreducible cuspidal representation $\sigma\in \aaa_{F}^{\circ}$ and $a,b \in \Z$ such that $a\leq b+1$, define the segment $[\nu^{a}\sigma,\nu^{b}\sigma]$ to be the set $\{ \nu^{a}\sigma, \nu^{a+1}\sigma,\dots,\nu^{b}\sigma \}$ if $a\leq b$ and the empty set if $a=b+1$. We say that the segment  $[\nu^{a}\sigma,\nu^{b}\sigma]$ is supported on $\sigma^\Z$.
\end{defn}

For a segment $\Delta=[\nu^{a}\sigma,\nu^{b}\sigma]=[a,b]_{(\sigma)}$, we denote by $b(\Delta)=\nu^{a}\sigma$ its beginning, by $e(\Delta)=\nu^{b}\sigma$ its end, and by $\ell(\Delta)=b-a+1$ its length respectively. 

The representation $\nu^{a}\sigma \times \cdots\times \nu^{b}\sigma$ has a unique irreducible subrepresentation and a unique irreducible quotient which we write as $Z(\Delta)$ and $L(\Delta)$ respectively. By convention, if the set $\Delta$ is empty, then both $Z(\Delta)$ and $L(\Delta)$ are defined to be the trivial representation of the trivial group.
\begin{defn}
Two segments $\Delta_{1}$ and $\Delta_{2}$ are said to be linked if $\Delta_{1}\nsubseteq \Delta_{2}$, $\Delta_{2}\nsubseteq \Delta_{1}$ and $\Delta_{1}\cup \Delta_{2}$ is also a segment. If $\Delta_{1}$ and $\Delta_{2}$ are linked and $b(\Delta_{1}\cup\Delta_{2})=b(\Delta_{1})$, then we say that $\Delta_{1}$ precedes $\Delta_{2}$ and write $\Delta_{1}\prec \Delta_{2}$.
\end{defn}
Let $\cO$ be the set of multi-sets of segments. Let $\mathfrak m=\{\Delta_1,\dots,\Delta_t\}\in\cO$. The integer $t$ will be known as the {\it size} of the multi-set $\mathfrak m$ and will be denoted by $|\mathfrak m|$. Any permutation $\varsigma$ of the set $\{1,\dots,t\}$ induces an arrangement of the segments of the multi-set $\mathfrak m$ which we call an {\it order} on $\mathfrak m$. An order on $\mathfrak m$ is of \emph{standard} form if $\Delta_{\varsigma(i)}\not \prec\Delta_{\varsigma(j)}$ for all $i<j$. Clearly every $\mathfrak m\in \cO$ admits an order that is of standard form. 

\subsubsection{The Zelevinsky classification} Let $\mathfrak m=\{\Delta_1,\dots,\Delta_t\}\in\cO$ be ordered in standard form. The representation
\begin{equation*}
Z(\Delta_1) \times\cdots \times Z(\Delta_t)
\end{equation*}
is independent of the choice of order of standard form. It has a unique irreducible submodule that we denote by $Z(\mathfrak m)$.
 
The Zelevinsky classification says that the map $(\mathfrak m\mapsto Z(\mathfrak m)):\cO\rightarrow \aaa_{F}$ is a bijection (see \cite[Theorem 6.1]{Z}).

\subsubsection{The Langlands classification}\label{sss: Langlands_class}
Let $\mathfrak m=\{\Delta_1,\dots,\Delta_t\}\in\cO$ be ordered in standard form. The representation
\begin{equation*}
L(\Delta_1)\times\cdots\times L(\Delta_t)
\end{equation*}
is independent of the choice of order of standard form. It has a unique irreducible quotient that we denote by $L(\mathfrak m)$.

The Langlands classification says that the map $( \mathfrak m\mapsto L(\mathfrak m)) :\cO\rightarrow \aaa_{F}$ is a bijection (for instance see \cite[Theorem 1.2.5]{Kud}).

\subsubsection{The Zelevinsky involution}\label{mwalgo}
It follows from the two classifications above that for any $\mathfrak m\in \cO$ there exists a unique $\mathfrak m^t\in \cO$ such that $Z(\mathfrak m)=L(\mathfrak m^t)$. The function $\mathfrak m\mapsto \mathfrak m^t$ is an involution on $\cO$ known as the Zelevinsky involution. For $\pi=Z(\mathfrak m)\in \aaa_{F}$, let $\pi^t=L(\mathfrak m)$. Then $\pi\mapsto \pi^t$ is the corresponding involution on $\aaa_{F}$.

Given a multi-set $\mathfrak m$, an algorithm to compute $\mathfrak m^{t}$ is provided in \cite{MW}. 

\subsubsection{The cuspidal support}\label{def: cusp supp}
For every $\pi\in \aaa_{F}$ there exist $\sigma_1,\dots,\sigma_k\in \aaa_{F}^{\circ}$, unique up to rearrangement, so that $\pi$ is isomorphic to a subrepresentation of $\sigma_1\times \cdots \times \sigma_k$ (see \cite[Theorem 1.10]{Z}). Let $\supp(\pi)=\{\sigma_i:i=1,\dots,k\}$ be the support of $\pi$. For $\mathfrak m\in\cO$ let $\supp(\mathfrak m)=\{\sigma\in \mathcal A_{F}^{\circ}: \sigma\in\Delta\text{ for some }\Delta\in \mathfrak m\}$ be the support of $\mathfrak m$. \footnote{The support is often considered as a multi-set. In this article though only the underlying set plays a role and hence we will treat the support, of both a representation and a multi-set of segments, as a set.}

\begin{defn}\label{def: cusp supp_1}
A representation $\pi\in \aaa_{F}$ is said to be {\it rigid} if $\supp(\pi)\subseteq \sigma^\Z$ for some $\sigma\in \aaa_{F}^{\circ}$. A multi-set $\mathfrak m\in \cO$ is called rigid if $\supp(\mathfrak m)\subseteq\sigma^\Z$ for some $\sigma\in \aaa_{F}^{\circ}$. 
\end{defn}
In the sequel a rigid multi-set $\mathfrak m=\mathfrak m_{(\sigma)}=\{[\nu^{a_{1}}\sigma,\nu^{b_{1}}\sigma],\dots,[\nu^{a_{t}}\sigma,\nu^{b_{t}}\sigma]\}$ will sometimes be denoted by $\{[a_{1},b_{1}],\dots,[a_{t},b_{t}]\}_{(\sigma)}$. 

Let $\pi=\pi_{(\sigma)}\in \mathcal A_{F}$ be such that $\supp(\pi)\subset \sigma^{\mathbb Z}$. Write $\pi_{(\sigma)}=L(\mathfrak m_{(\sigma)})$ where $\mathfrak m_{(\sigma)}=\{[a_{1},b_{1}],\dots, [a_{t},b_{t}]\}_{(\sigma)}$. Let $F'$ be a non-archimedean local field (not necessarily different from $F$) and let $\sigma'\in \mathcal A_{F'}^{\circ}$. Then define $\pi_{(\sigma')}=L(\mathfrak m_{(\sigma')})=L(\{[a_{1},b_{1}],\dots, [a_{t},b_{t}]\}_{(\sigma')})$. (Note that $\{[a_{1},b_{1}],\dots, [a_{t},b_{t}]\}_{(\sigma')}$ is the multi-set $\{[\nu_{F'}^{a_{1}}\sigma',\nu_{F'}^{b_{1}}\sigma'],\dots, [\nu_{F'}^{a_{t}}\sigma',\nu_{F'}^{b_{t}}\sigma']\}$.) In particular for a segment $\Delta=[a,b]_{(\sigma)}$, we will denote by $\Delta_{(\sigma')}$ the segment $[\nu_{F'}^{a}\sigma',\nu_{F'}^{b}\sigma']$.

\subsection{The Weil-Deligne representations}
\subsubsection{Definition of Weil-Deligne representations}
\begin{defn}
An $n$-dimensional admissible Weil-Deligne representation of $W_{F}$ is a pair $((\rho,V),N)$ where $(\rho,V)$ is a semi-simple, smooth and complex valued representation of $W_{F}$ of dimension $n$ and the operator $N:V\to V$ is a nilpotent endomorphism such that 
\begin{equation}\label{defN}
\rho(w)\circ N\circ \rho(w)^{-1}=||w||N
\end{equation}
\end{defn}
where $||w||$ is the character of $W_{F}$ as defined in \S \ref{ss:not_pre}. A morphism of Weil-Deligne representations $((\rho_{1},V_{1}),N_{1})\to ((\rho_{2},V_{2}),N_{2})$ is a map of representations $T: (\rho_{1},V_{1})\to (\rho_{2},V_{2})$, such that $T\circ N_{1}=N_{2}\circ T$. For more details on Weil-Deligne representations we refer the reader to \cite{Wed}. 

Let $\mathcal G_{F}(n)$ denote the set of all isomorphism classes of $n$-dimensional admissible Weil-Deligne representations of $W_{F}$ and let $\mathcal G_{F}=\sqcup _{n\geq 0}\mathcal G_{F}(n)$.

\subsubsection{Classification of Weil-Deligne representations}\label{sss:classfn_wd_rep}
For a semi-simple, smooth and complex valued representation $(\rho,V)$ of $W_{F}$ put $\Delta'=[\nu^{'a}\rho,\nu^{'b}\rho]\ (a\leq b)$ to be the set $\{\nu^{'a}\rho,\nu^{'(a+1)}\rho,\dots,\nu^{'b}\rho\}$ as in \S \ref{sss:def_seg}. Let $\rho(\Delta')=\nu^{'a}\rho\oplus\nu^{'(a+1)}\rho \oplus\cdots \oplus\nu^{'b}\rho$. Let $V_{i}$ be the space on which $\nu^{'i}\rho$ acts ($a\leq i \leq b$). Clearly, all these spaces can be identified with the same space $V$. Define a map $N(\Delta'):\oplus_{i=a}^{b}V_{i}\to \oplus_{i=a}^{b}V_{i}$ in the following way. Let $N(\Delta'):V_{i}\to V_{i+1}$ be the obvious (identity) morphism (for $i=a,\dots, b-1$) and let $N(\Delta')|_{V_{b}}=0$. Now assign to each $\Delta'$ the Weil-Deligne representation $\tau(\Delta')=(\rho(\Delta'),N(\Delta'))$.

It follows from generalities that the $\tau(\Delta')$ are indecomposable objects in $\mathcal G_{F}$. They are mutually non-isomorphic and every indecomposable object in $\mathcal G_{F}$ is of this form. Thus every Weil-Deligne representation decomposes into a direct sum $\tau(\Delta_{1}')\oplus\cdots\oplus \tau(\Delta_{r}')$ (for some positive integer $r$), and moreover, this decomposition is unique up to a permutation. 

\subsubsection{The maps $\rec$ and $\rec^{\circ}$}
Let
\begin{equation*} 
\rec=\rec_{F}: \aaa_{F}\to \mathcal G_{F}
\end{equation*}
be the Langlands reciprocity map established in \cite{LRS} for the positive characteristic case and in \cite{HT} (and later also in \cite{H} and \cite{Sc}) for characteristic 0. Denote by $\rec^{\circ}$ the restriction of $\rec$ to $\aaa_{F}^{\circ}$. The map $\rec$ can be described in terms of the map $\rec^{\circ}$ as follows (see \cite[\S10]{Z} for details). If $\pi=L([\nu^{a_{1}}\sigma_{1},\nu^{b_{1}}\sigma_{1}],\dots,[\nu^{a_{t}}\sigma_{t},\nu^{b_{t}}\sigma_{t}])$, then
\begin{equation}\label{eq:rec}
\rec(\pi)=\oplus_{i=1}^{t}(\rho([\nu^{'a_{i}}\rec^{\circ}(\sigma_{i}),\nu^{'b_{i}}\rec^{\circ}(\sigma_{i})]), N([\nu^{'a_{i}}\rec^{\circ}(\sigma_{i}),\nu^{'b_{i}}\rec^{\circ}(\sigma_{i})])).
\end{equation}

\subsubsection{Partition associated to a Weil-Deligne representation}\label{sss:def_part}
Given an $n$-dimensional admissible Weil-Deligne representation $((\rho,V),N)$, one can associate a partition $f\in \pp(n)$ to it in the following manner. Since $N$ is a nilpotent endomorphism it can be written as a matrix with 1's on the sub-diagonal and 0's elsewhere in a unique way (up to the order of the Jordan blocks). Considering the size of the Jordan blocks of $N$ defines a partition of $n$ that we will denote by $f$. In particular, the partition corresponding to $N=0$ is the partition $n\mathbf{1}_{1}=\{1,\dots , 1\}$.

Denote by $P_{F,n}: \mathcal G_{F}(n) \to \pp(n)$ the map which takes $((\rho,V), N)$ to the partition $f$ as described above and let $P_{F}:\mathcal G_{F} \to \pp$ be the map such that ${P_{F}}|_{\mathcal G_{F}(n)}=P_{F,n}$.  The proof of the following result in an easy linear algebra exercise.
\begin{lemma}\label{lem_def_part}
Let $\pi=L([\nu^{a_{1}}\sigma_{1},\nu^{b_{1}}\sigma_{1}],\dots,[\nu^{a_{t}}\sigma_{t},\nu^{b_{t}}\sigma_{t}])$ where $\sigma_{i}\in \aaa_{F}^{\circ}(k_{i})$. Then we have
\begin{equation*}
P_{F}(\rec(\pi))=\sum_{i=1}^{t}(b_{i}-a_{i}+1)_{k_{i}}.
\end{equation*}
\qed
\end{lemma}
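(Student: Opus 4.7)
The plan is to reduce the statement to computing the Jordan type of the nilpotent operator $N(\Delta')$ on a single segment $\Delta'$, and then combine the contributions of different segments by direct sum. Since the Jordan type of a direct sum of nilpotent operators is the union (multi-set sum) of the Jordan types of the summands, the map $P_F$ is additive with respect to direct sums in $\mathcal G_F$.

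First I would apply \eqref{eq:rec} to write
\begin{equation*}
\rec(\pi)=\bigoplus_{i=1}^{t}\tau(\Delta_i'), \qquad \Delta_i'=[\nu'^{a_i}\rec^{\circ}(\sigma_i),\nu'^{b_i}\rec^{\circ}(\sigma_i)],
\end{equation*}
and reduce, by the additivity noted above, to showing that $P_F(\tau(\Delta_i'))=(b_i-a_i+1)_{k_i}$ for each $i$. So fix one index $i$ and write $\rho=\rec^{\circ}(\sigma_i)$, $k=k_i$, $a=a_i$, $b=b_i$, $\ell=b-a+1$.

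Next I would unpack the construction of $\tau(\Delta_i')$ from \S \ref{sss:classfn_wd_rep}. The underlying space is $V_a\oplus V_{a+1}\oplus\cdots\oplus V_b$, each $V_j$ identified with the underlying space $V$ of $\rho$, and $N(\Delta_i')$ restricts to the identification $V_j\xrightarrow{\sim}V_{j+1}$ for $a\leq j\leq b-1$, while it vanishes on $V_b$. Picking any basis $e_1,\dots,e_k$ of $V$, the vectors $e_l,N(\Delta_i')e_l,\dots,N(\Delta_i')^{\ell-1}e_l$ form a Jordan chain of length $\ell$ for each $l$, and these $k$ chains are linearly independent and exhaust the $k\ell$-dimensional space. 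Hence the Jordan type of $N(\Delta_i')$ consists of exactly $k$ blocks of size $\ell$, giving $P_F(\tau(\Delta_i'))=\ell_{k}=(b_i-a_i+1)_{k_i}$, as desired. Summing over $i$ yields the lemma.

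The argument is essentially bookkeeping on the explicit shift operator, so no step presents a real obstacle; the only point requiring a touch of care is making sure the $k$ basis-wise Jordan chains are genuinely independent (which is immediate once one remembers that each $V_j$ is identified with $V$ via the same identification used to define $N(\Delta_i')$).
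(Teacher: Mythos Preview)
Your proof is correct and is precisely the ``easy linear algebra exercise'' that the paper alludes to without supplying any details (the paper simply places a \qed after the statement). Your decomposition via \eqref{eq:rec}, the additivity of Jordan type under direct sums, and the explicit Jordan-chain computation for the shift operator $N(\Delta_i')$ on $V_a\oplus\cdots\oplus V_b$ are exactly the expected ingredients.
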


\subsubsection{} 
\begin{remark}
In \cite{OS} it was mistakenly remarked that the map $((\rho,V),N)\mapsto ([\rho],f)$ (where $[\rho]$ is the isomorphism class of the representation $\rho$) is an injection. Although this fact was used in the proofs of \cite[ Lemma 4.1 and Proposition 7.1]{OS}, it wasn't done so in a crucial manner, and they can be rectified simply by working with the description of the reciprocity map that we provide in eq.(\ref{eq:rec}) instead of the description given in \cite[\S 3]{OS}. In particular, the statements in that paper are correct. In any case, the results on ladder representations that we obtain in this article are used here to provide independent proofs of the main results of \cite{OS}.
\end{remark}

\section{Base change and automorphic induction}\label{defbcai}
The base change and the automorphic induction maps were studied in \cite{AC} and \cite{HH} respectively before the local Langlands correspondence for the general linear groups over non-archimedean local fields was established. Now that we have the correspondence at our disposal, these two maps can be defined in a much more simpler manner. We now recall these definitions. We also obtain some results analyzing the behavior of the class of rigid representations under these two maps. Some of these results (for instance Lemma \ref{strucpres}) can be found in the aforementioned references but we provide a proof here using the definitions of the two maps that we use in this article.     
\subsection{Definition of the two maps}
For now suppose that $E/F$ is an arbitrary finite Galois extension of non-archimedean local fields such that $[E:F]=d$.
\subsubsection{Base change}\label{defnbc}
Let $\pi\in \aaa_{F}(n)$ and ${\rm rec}_{F}(\pi)=((\rho,V),N)$. Denote by 
\[
{\rm res}_{E/F}:\weil_{n}(F)\to \weil_{n}(E)
\]
the map defined by ${\rm res}_{E/F}(\rho,N)=(\rho|_{W_{E}},N)$. This defines an irreducible representation of ${\rm GL}_{n}(E)$ via the local Langlands correspondence. The above process of obtaining an irreducible representation of ${\rm GL}_{n}(E)$ from an irreducible representation of ${\rm GL}_{n}(F)$ is known as {\it base change}. For $\pi\in\mathcal A_{F}(n)$, its base change will be denoted by ${\rm bc}_{E/F}(\pi)$ and is defined by 
\[
{\rm rec}_{E}({\rm bc}_{E/F}(\pi))={\rm res}_{E/F}({\rm rec}_{F}(\pi)).
\]
\subsubsection{Automorphic induction}\label{defnai}
Let $\pi\in \aaa_{E}(m)$ and ${\rm rec}_{E}(\pi)=((\rho,V),N)$. Define now the representation ${\rm ind}^{W_{F}}_{W_{E}}(\rho)$ of $W_{F}$ in the following way:
\[
{\rm ind}^{W_{F}}_{W_{E}}(\rho)=\{f:W_{F}\to V\ |\ f(hg)=\rho(h)f(g)\ \forall h\in W_{E}, \ g\in W_{F}\}.
\]
Since $\rho$ is semi-simple, the induced representation ${\rm ind}^{W_{F}}_{W_{E}}(\rho)$ is semi-simple as well. Further define $\tilde{N}$ such that $(\tilde{N}f)(g)=||g||N(f(g))$. It can be easily checked that $\tilde{N}$ is a nilpotent endomorphism of the induced space satisfying eq.(\ref{defN}). Thus define
\[
{\rm ind}_{E/F}((\rho, N))=({\rm ind}^{W_{F}}_{W_{E}}(\rho),\tilde{N}),
\]
an element in $\weil_{F}(md)$. This Weil-Deligne representation corresponds to an irreducible representation of ${\rm GL}_{md}(F)$, via the reciprocity map. This process of obtaining an irreducible representation of ${\rm GL}_{md}(F)$ from an irreducible representation of ${\rm GL}_{m}(E)$ is known as {\it automorphic induction}. For $\pi\in\mathcal A_{E}(m)$, its automorphic induction will be denoted by ${\rm ai}_{E/F}(\pi)$ and is defined by 
\[
{\rm rec}_{F}({\rm ai}_{E/F}(\pi))={\rm ind}_{E/F}({\rm rec}_{E}(\pi)).
\]
\subsubsection{} Our next lemma provides a simplified expression for the nilpotent operator $\tilde{N}$.
\begin{lemma}\label{patnai}
Let $\{ g_{1},\dots,g_{d}\}$ be a fixed set of representatives for the right coset space $H\setminus G$ where $G=W_{F}$ and $H=W_{E}$. Let $((\rho,V),N)\in \weil_{E}$ with $\tilde{N}:{\rm ind}^{G}_{H}(\rho)\to {\rm ind}^{G}_{H}(\rho)$ as defined in \S\ref{defnai}. Then we can choose bases of $V$ and ${\rm ind}^{G}_{H}(\rho)$ such that in the matrix form with respect to the two bases, $\tilde{N}={\rm diag}(||g_{1}||N,\cdots,||g_{d}||N)$. In particular, if the partition corresponding to the operator $N$ (via its Jordan canonical form) is $f$, then that corresponding to $\tilde{N}$ is $df$. 
\end{lemma}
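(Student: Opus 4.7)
The strategy is to trivialize the induced space using the right coset representatives $g_1,\ldots,g_d$ and then compute $\tilde N$ in explicit coordinates. First I would fix a basis $\{w_1,\ldots,w_n\}$ of $V$ and, for each pair $(i,j)$ with $1\le i\le d$ and $1\le j\le n$, define $e_{i,j}\in\mathrm{ind}_{W_E}^{W_F}(\rho)$ as the unique element satisfying $e_{i,j}(g_k)=\delta_{i,k}w_j$ for $k=1,\ldots,d$. Well-definedness and linear independence both follow from the decomposition $W_F=\bigsqcup_k W_E g_k$ together with the covariance relation $\varphi(hg)=\rho(h)\varphi(g)$ defining the induced space, so $\{e_{i,j}\}$ is a basis of $\mathrm{ind}_{W_E}^{W_F}(\rho)$ of size $nd$.

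Next I would apply the definition $(\tilde N\varphi)(g)=\|g\|N(\varphi(g))$ to these basis vectors, evaluating at each coset representative:
\[
(\tilde N e_{i,j})(g_k)=\|g_k\|N(\delta_{i,k}w_j)=\delta_{i,k}\|g_i\|\sum_l N_{lj}w_l,
\]
where $(N_{lj})$ denotes the matrix of $N$ in the basis $\{w_l\}$. This identifies $\tilde N e_{i,j}=\|g_i\|\sum_l N_{lj}e_{i,l}$, and ordering the basis $\{e_{i,j}\}$ lexicographically (first by $i$, then by $j$) produces exactly the block-diagonal matrix $\mathrm{diag}(\|g_1\|N,\ldots,\|g_d\|N)$ asserted in the lemma.

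For the partition statement, I would invoke the elementary fact that for any nonzero scalar $c$ the operator $cN$ is similar to $N$: a nilpotent Jordan block of size $k$ with entries $c$ on the superdiagonal is conjugated back to the standard block by $\mathrm{diag}(1,c,c^2,\ldots,c^{k-1})$, and the argument extends block by block. Since $\|g_i\|\in\R_{>0}$ for every $i$, each diagonal block $\|g_i\|N$ has Jordan type $f$, and the disjoint assembly of $d$ such blocks gives Jordan type $df$ for $\tilde N$. The main point is the trivialization by coset representatives; there is no substantive obstacle beyond this, as the compatibility relation (\ref{defN}) enters only implicitly, through the prior verification that $\tilde N$ preserves the induced space.
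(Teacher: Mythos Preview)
Your proof is correct and follows essentially the same approach as the paper: trivialize the induced space via the coset decomposition $W_F=\bigsqcup_i W_E g_i$, build the obvious basis $\{e_{i,j}\}$ (the paper's $\{f_{i,j}\}$), and compute $\tilde N$ directly on it to obtain the block-diagonal form. The only cosmetic difference is that the paper chooses the basis of $V$ to put $N$ in Jordan form from the outset, so the partition statement is read off immediately, whereas you work with an arbitrary basis and afterwards argue that each block $\|g_i\|N$ has the same Jordan type as $N$; both routes are equally valid.
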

\begin{proof}
Let ${\rm dim}\ V=l$ and $(v_{1},\dots,v_{l})$ be an ordered basis of $V$ such that the matrix of $N$ with respect to it is expressed in its Jordan form. Since $N$ is nilpotent, there exists $\{v_{l_{1}},\dots,v_{l_{k}}\}\subset \{v_{1},\dots,v_{l}\}$ such that 

\[
N(v_{j})=
\begin{cases} 
v_{j+1}& {\rm if}\ j\in \{l_{1},\dots,l_{k}\} \\
0& {\rm if}\ j\notin \{l_{1},\dots,l_{k}\}.
\end{cases}\\
\]

Now define a standard basis $\{ f_{i,j}\ | \ 1\leq i \leq d, 1\leq j\leq l \}$ of the space ${\rm ind}^{W_{F}}_{W_{E}}(\rho)$ in the following manner.
\[
f_{i,j}= 
\begin{cases}
\rho(h)v_{j} & {\rm if}\ g= hg_{i} \\
0& {\rm otherwise}.
\end{cases}
\]
Fix $i$. An easy calculation shows that
\[
\tilde{N}(f_{i,j})=
\begin{cases}
 ||g_{i}||f_{i,j+1} & {\rm if}\ j\in \{l_{1},\dots,l_{k}\} \\
0& {\rm otherwise}.
\end{cases}
\]
This gives the lemma.
\end{proof}

\subsection{Some preliminary results} 
We return to the case when $E/F$ is a cyclic extension of characteristic different than two such that $d=[E:F]$ is prime. Next we recall some basic properties of base change and automorphic induction.
\subsubsection{}The following result was obtained in \cite[Lemma 6.10]{AC} and \cite[Proposition 5.5]{HH} for the base change and the automorphic induction maps respectively. Assuming the correspondence one can obtain these results in an elementary fashion using arguments similar to the ones employed in the proof of \cite[Lemma 7.1]{OS}.   
\begin{lemma}\label{cuspfac}
\begin{enumerate}
\item Let $\sigma\in \mathcal A_{F}^{\circ}(k)$. Then $${\rm bc}_{E/F}(\sigma)=\sigma_{1}\times\cdots\times \sigma _{t}$$ where $t|k$ and $\sigma_{i}\in \mathcal A_{E}^{\circ}(\frac{k}{t})$ such that $\sigma_{i}\neq \nu_{E}^{\alpha}\sigma_{j}$, for any $\alpha\in \mathbb R$ and $i\neq j$. 
\item Analogously, let $\sigma\in \mathcal A_{E}^{\circ}(k)$. Then $${\rm ai}_{E/F}(\sigma)=\sigma_{1}\times\cdots\times \sigma _{t}$$ where $t|kd$ and $\sigma_{i}\in \mathcal A_{F}^{\circ}(\frac{kd}{t})$ such that $\sigma_{i}\neq \nu_{F}^{\alpha}\sigma_{j}$, for any $\alpha\in \mathbb R$ and $i\neq j$.
\item Moreover if $\sigma\in \mathcal A_{F}^{u}$ (resp., $\sigma\in \mathcal A_{E}^{u}$), then each $\sigma_{i}\ (i=1,\dots,t)$ appearing in ${\rm bc}_{E/F}(\sigma)$ (resp., ${\rm ai}_{E/F}(\sigma))$ is in $\mathcal A_{E}^{u}$ (resp., $\mathcal A_{F}^{u}$).  
\end{enumerate}
\qed
\end{lemma}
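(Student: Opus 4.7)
The plan is to transfer everything to the Weil parameter side via the reciprocity map. For $\sigma\in \aaa_F^\circ(k)$ we have $\rec_F(\sigma)=(\rho_\sigma,0)$ with $\rho_\sigma$ an irreducible $k$-dimensional representation of $W_F$, so the condition $N=0$ is preserved under both restriction to $W_E$ and induction from $W_E$, and the resulting parameters are direct sums of irreducibles with trivial nilpotent. Via eq.~(\ref{eq:rec}) this already implies that ${\rm bc}_{E/F}(\sigma)$ and ${\rm ai}_{E/F}(\sigma)$ are products of cuspidals; what remains is to analyze $\rho_\sigma|_{W_E}$ and $\Ind^{W_F}_{W_E}\rho_\sigma$ using Clifford/Mackey theory for the normal subgroup $W_E\subset W_F$ with cyclic quotient ${\rm Gal}(E/F)$ of prime order $d$.

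For part (1), since $H^2(\mathbb{Z}/d,\mathbb{C}^\times)=0$, Clifford's theorem forces the restriction to be multiplicity-free, giving $\rho_\sigma|_{W_E}=\rho_1\oplus\cdots\oplus\rho_t$ with distinct ${\rm Gal}(E/F)$-conjugate irreducibles and $t\in\{1,d\}$, each of dimension $k/t$. Each $\rho_i$ corresponds via $\rec_E^\circ$ to a cuspidal $\sigma_i\in\aaa_E^\circ(k/t)$. To verify the strong distinctness $\sigma_i\ne\nu_E^\alpha\sigma_j$ for $\alpha\in\mathbb{R}$ and $i\ne j$, suppose otherwise; on the Weil side this reads $\rho_i=\rho_j\otimes{\nu_E'}^\alpha$. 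Writing $\rho_i=\rho_j^\gamma$ for some $\gamma$ of order $d$ in ${\rm Gal}(E/F)$ and using that the absolute-value character $\nu_E'$ is ${\rm Gal}(E/F)$-invariant, iteration yields $\rho_j=\rho_j^{\gamma^d}=\rho_j\otimes{\nu_E'}^{d\alpha}$, so ${\nu_E'}^{d\alpha}=1$; since this character has strictly positive real image, $\alpha=0$, and hence $\rho_i=\rho_j$, contradicting distinctness.

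For part (2), Mackey's criterion yields two cases: either $\rho_\sigma$ has trivial Galois stabilizer, in which case $\Ind^{W_F}_{W_E}\rho_\sigma$ is irreducible of dimension $kd$ and $t=1$; or $\rho_\sigma$ is Galois-invariant and extends to an irreducible $\tilde\rho_\sigma$ on $W_F$, and the projection formula gives $\Ind^{W_F}_{W_E}\rho_\sigma=\bigoplus_{i=0}^{d-1}\tilde\rho_\sigma\otimes\kappa^i$ with $\kappa=\kappa_{E/F}$, so $t=d$. In the latter case, $\sigma_i=\nu_F^\alpha\sigma_j$ would translate to $\kappa^{i-j}={\nu_F'}^\alpha$ as characters of $W_F$; comparing moduli (the left side is unitary of finite order, the right side has positive real image) forces $\alpha=0$, whence $d\mid(i-j)$, i.e.\ $i=j$. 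Part (3) is immediate once we recall that for an irreducible Weil representation the associated cuspidal is unitarizable iff the parameter has bounded image: since $W_E$ has finite index in $W_F$, boundedness is preserved by restriction, induction, and extension to $W_F$, and it is also preserved by tensoring with the unitary character $\kappa$, so every factor $\sigma_i$ inherits unitarizability from $\sigma$.

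The principal obstacle is calibrating the twist-equivalence argument: one must track the correspondence $\nu_E\leftrightarrow \nu_E'$ (and $\nu_F\leftrightarrow \nu_F'$) under reciprocity and exploit both that the absolute-value character has strictly positive real image and that $\kappa$ is unitary of finite order $d$. With these translations in hand, Clifford and Mackey provide the decomposition, eq.~(\ref{eq:rec}) converts it into the claimed product of cuspidals, and the no-twist-equivalence clause then becomes the consistency check worked out above.
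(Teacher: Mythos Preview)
The paper does not supply its own proof of this lemma; it cites \cite[Lemma 6.10]{AC} and \cite[Proposition 5.5]{HH} and remarks that, assuming the local Langlands correspondence, one can argue elementarily as in \cite[Lemma 7.1]{OS}.  Your Clifford/Mackey argument on the parameter side is exactly this elementary route, so there is no alternative approach to compare against.

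One step should be tightened.  From an isomorphism $\rho\cong\rho\otimes\chi$ you cannot conclude $\chi=1$ directly, since irreducible representations can admit non-trivial self-twists.  This occurs in part (1), where you write ``$\rho_j=\rho_j\otimes{\nu_E'}^{d\alpha}$, so ${\nu_E'}^{d\alpha}=1$'', and again in part (2), where ``$\sigma_i=\nu_F^\alpha\sigma_j$ would translate to $\kappa^{i-j}={\nu_F'}^\alpha$''.  The repair is immediate: taking determinants, $\rho\cong\rho\otimes\chi$ forces $\chi^{\dim\rho}=1$; since $\nu'$ is positive-real-valued this yields $\alpha=0$, and in part (2) one then invokes the pairwise distinctness of the summands $\tilde\rho_\sigma\otimes\kappa^i$ (which follows from the Mackey computation of $\mathrm{End}(\mathrm{Ind}\,\rho_\sigma)$) to conclude $i=j$.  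With this adjustment your argument is complete.
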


\subsubsection{Compatibility with parabolic induction}
\begin{lemma}\label{bcprod}
\begin{enumerate}
\item Let $\pi_{1},\dots,\pi_{r}\in \mathcal A_{F}$ such that both $\pi_{1}\times\cdots\times\pi_{r}$ and ${\rm bc}_{E/F}(\pi_{1})\times \cdots\times {\rm bc}_{E/F}(\pi_{r})$ are irreducible. Then 
$${\rm bc}_{E/F}(\pi_{1}\times\cdots\times\pi_{r})={\rm bc}_{E/F}(\pi_{1})\times \cdots\times {\rm bc}_{E/F}(\pi_{r}).$$
\item Let $\pi_{1},\dots,\pi_{r}\in \mathcal A_{E}$ such that both $\pi_{1}\times\cdots\times\pi_{r}$ and ${\rm ai}_{E/F}(\pi_{1})\times \cdots\times {\rm ai}_{E/F}(\pi_{r})$ are irreducible. Then 
\[
{\rm ai}_{E/F}(\pi_{1}\times\cdots\times\pi_{r})={\rm ai}_{E/F}(\pi_{1})\times \cdots\times {\rm ai}_{E/F}(\pi_{r}).
\]
\end{enumerate}
\end{lemma}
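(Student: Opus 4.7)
The plan is to reduce both statements to two facts on the Weil--Deligne side: (a) the Langlands reciprocity map ${\rm rec}$ is compatible with parabolic induction whenever the parabolically induced representation is irreducible, in the sense that ${\rm rec}(\pi_{1}\times\cdots\times\pi_{r})=\bigoplus_{i}{\rm rec}(\pi_{i})$; and (b) the maps ${\rm res}_{E/F}$ and ${\rm ind}_{E/F}$ on $\mathcal{G}_{F}$ and $\mathcal{G}_{E}$ both commute with direct sums. Fact (a) follows from the Langlands classification together with the explicit description of ${\rm rec}$ in equation (\ref{eq:rec}): writing $\pi_{i}=L(\mathfrak{m}_{i})$, the irreducibility of the product forces $\pi_{1}\times\cdots\times\pi_{r}=L(\mathfrak{m}_{1}\sqcup\cdots\sqcup\mathfrak{m}_{r})$, and the right-hand side of (\ref{eq:rec}) is manifestly additive in the multi-set of segments. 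Fact (b) for restriction is immediate. For induction it is the standard compatibility of ${\rm ind}_{W_{E}}^{W_{F}}$ with direct sums, combined with additivity of $\tilde{N}$ in $N$, which is obvious from the defining formula $(\tilde{N}f)(g)=||g||N(f(g))$ (and also visible from the block-diagonal description provided by Lemma \ref{patnai}).

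With these two ingredients in hand, part (1) becomes a short chase using the definition of ${\rm bc}_{E/F}$ from \S\ref{defnbc}:
\begin{align*}
{\rm rec}_{E}({\rm bc}_{E/F}(\pi_{1}\times\cdots\times\pi_{r}))
&={\rm res}_{E/F}\,{\rm rec}_{F}(\pi_{1}\times\cdots\times\pi_{r})\\
&={\rm res}_{E/F}\Bigl(\bigoplus_{i}{\rm rec}_{F}(\pi_{i})\Bigr)\\
&=\bigoplus_{i}{\rm res}_{E/F}\,{\rm rec}_{F}(\pi_{i})\\
&=\bigoplus_{i}{\rm rec}_{E}({\rm bc}_{E/F}(\pi_{i}))\\
&={\rm rec}_{E}\bigl({\rm bc}_{E/F}(\pi_{1})\times\cdots\times{\rm bc}_{E/F}(\pi_{r})\bigr),
\end{align*}
where the final equality uses the hypothesis that ${\rm bc}_{E/F}(\pi_{1})\times\cdots\times{\rm bc}_{E/F}(\pi_{r})$ is irreducible together with fact (a) applied over $E$. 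Injectivity of ${\rm rec}_{E}$ now yields part (1). Part (2) proceeds identically, with ${\rm res}_{E/F}$ replaced by ${\rm ind}_{E/F}$, the roles of $E$ and $F$ swapped, and ${\rm bc}_{E/F}$ replaced by ${\rm ai}_{E/F}$.

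I do not anticipate any serious obstacle: once the framework of Weil--Deligne representations and the reciprocity map is in place, the lemma is a formal consequence of the compatibility of ${\rm rec}$ with parabolic induction in the irreducible case. The only mildly nontrivial bookkeeping point is verifying that the assignment $N\mapsto\tilde{N}$ on the induction side respects direct sums; this is immediate from either the explicit formula or the matrix description in Lemma \ref{patnai}, so it does not really constitute an obstacle.
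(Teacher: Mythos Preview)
Your proposal is correct and follows essentially the same approach as the paper's proof: both arguments transfer the problem to the Weil--Deligne side via ${\rm rec}$, use that ${\rm rec}$ takes an irreducible product to the direct sum of the parameters, and invoke the compatibility of ${\rm res}_{E/F}$ (resp.\ ${\rm ind}_{E/F}$, via Lemma~\ref{patnai}) with direct sums. The only cosmetic difference is that the paper reduces to $r=2$ by induction, whereas you handle all $r$ at once and give a slightly more explicit justification of fact (a) via the Langlands classification and equation~(\ref{eq:rec}).
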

\begin{proof}
We first prove (1). The general case reduces to the case when $r=2$ by induction. Thus let $r=2$. Let ${\rm rec}_{F}(\pi_{1})=(\rho_{1},N_{1})$ and ${\rm rec}_{F}(\pi_{2})=(\rho_{2},N_{2})$. Then we have,
\begin{equation*}
\begin{split}
{\rm rec}_{E}({\rm bc}_{E/F}(\pi_{1})\times {\rm bc}_{E/F}(\pi_{2})) & ={\rm rec}_{E}({\rm bc}_{E/F}(\pi_{1}))\oplus{\rm rec}_{F}({\rm bc}_{E/F}(\pi_{2}))\\
&= (\rho_{1}|_{W_{E}},N_{1})\oplus (\rho_{2}|_{W_{E}},N_{2})\\
&= ((\rho_{1}\oplus\rho_{2})|_{W_{E}},N_{1}\oplus N_{2}).
\end{split}
\end{equation*}
This is equal to ${\rm rec}_{E}({\rm bc}_{E/F}(\pi_{1}\times\pi_{2}))$ which demonstrates the statement in the base change case.

Next we consider the statement for the automorphic induction case. As above it is enough to prove the statement for $r=2$. Let ${\rm rec}_{E}(\pi_{1})=(\rho_{1},N_{1})$ and ${\rm rec}_{E}(\pi_{2})=(\rho_{2},N_{2})$. Then we have,
\begin{equation*}
\begin{split}
{\rm rec}_{F}({\rm ai}_{E/F}(\pi_{1})\times {\rm ai}_{E/F}(\pi_{2})) & ={\rm rec}_{F}({\rm ai}_{E/F}(\pi_{1}))\oplus{\rm rec}_{F}({\rm ai}_{E/F}(\pi_{2}))\\
&= ({\rm ind}_{W_{E}}^{W_{F}}(\rho_{1}),\tilde{N_{1}})\oplus ({\rm ind}_{W_{E}}^{W_{F}}(\rho_{2}),\tilde{N_{2}})\\
&= ({\rm ind}_{W_{E}}^{W_{F}}(\rho_{1}\oplus\rho_{2}),\tilde{N_{1}}\oplus\tilde{N_{2}}).\\
\end{split}
\end{equation*}
By Lemma \ref{patnai} this is equal to ${\rm rec}_{F}({\rm ai}_{E/F}(\pi_{1}\times\pi_{2}))$ which finishes the proof of the lemma. 
 
\end{proof}

\begin{remark}
Henceforth in this article, every statement that we make for the base change setting has an automorphic induction analogue and vice versa. The proof in one setting is a verbatim translation of the proof in the other setting. To avoid repetition of arguments, from this point onwards we will give precise statements for both settings but prove only the one in the base change case. 
\end{remark}

\subsection{Compatibility with the Zelevinsky classification}\label{relzel}

\subsubsection{}\label{sss:3_not_1} 
The next lemma is a straightforward application of the local Langlands correspondence. We provide a proof for the sake of completeness. 

\begin{lemma}\label{strucpres}
\begin{enumerate}
\item Let $\sigma\in \mathcal A_{F}^{\circ}$ and let $\pi=\pi_{(\sigma)}\in\mathcal A_{F}$ be such that ${\rm Supp}\ \pi\subset \sigma^{\mathbb Z}$. Let ${\rm bc}_{E/F}(\sigma)=\sigma_{1}\times\cdots\times \sigma _{t}$ (see Lemma \ref{cuspfac}(1)). Then 
\begin{equation*}
{\rm bc}_{E/F}(\pi)= \pi_{(\sigma_{1})}\times \cdots \times \pi_{(\sigma_{t})}
\end{equation*}
(see \S \ref{def: cusp supp} for the notation).

\item Let $\sigma\in \mathcal A_{E}^{\circ}$ and let $\pi=\pi_{(\sigma)}\in\mathcal A_{E}$ be such that ${\rm Supp}\ \pi\subset \sigma^{\mathbb Z}$. Let ${\rm ai}_{E/F}(\sigma)=\sigma_{1}\times\cdots\times \sigma _{t}$ (see Lemma \ref{cuspfac}(2)). Then 
\begin{equation*}
{\rm ai}_{E/F}(\pi)= \pi_{(\sigma_{1})}\times \cdots \times \pi_{(\sigma_{t})}
\end{equation*}
(see \S \ref{def: cusp supp} for the notation).
\end{enumerate}
\end{lemma}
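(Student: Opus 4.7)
The plan is to verify the equality on the Weil--Deligne side and descend via the reciprocity map. Using the Langlands classification I would first write $\pi = L(\mathfrak{m}_{(\sigma)})$ with $\mathfrak{m}_{(\sigma)} = \{[a_1,b_1],\ldots,[a_k,b_k]\}_{(\sigma)}$, and invoke the explicit formula~\eqref{eq:rec} to expand $\rec_F(\pi) = \bigoplus_{i=1}^k \tau\bigl([\nu^{'a_i}\rec^\circ(\sigma),\nu^{'b_i}\rec^\circ(\sigma)]\bigr)$ as a sum of indecomposable Weil--Deligne representations of $W_F$.

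Next I would restrict this Weil--Deligne representation to $W_E$ and decompose it piece by piece. Two observations drive the computation. First, the character $||\cdot||$ on $W_F$ restricts to the analogous character on $W_E$, because local class field theory identifies the induced map $W_E^{ab}\hookrightarrow W_F^{ab}$ with the norm $\mathrm{N}_{E/F}$, and the absolute values are normalised so that $|\mathrm{N}_{E/F}(x)|_F = |x|_E$. Second, the hypothesis ${\rm bc}_{E/F}(\sigma) = \sigma_1\times\cdots\times\sigma_t$ is by definition the statement $\rec^\circ(\sigma)|_{W_E} = \bigoplus_{j=1}^t\rec^\circ(\sigma_j)$. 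On semi-simple pieces this already splits each segment $[\nu^{'a_i}\rec^\circ(\sigma),\nu^{'b_i}\rec^\circ(\sigma)]|_{W_E}$ as $\bigoplus_j [\nu^{'a_i}\rec^\circ(\sigma_j),\nu^{'b_i}\rec^\circ(\sigma_j)]$.

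The key technical step is to show that the restricted nilpotent operator $N$ also respects this decomposition. The Weil--Deligne relation $\rho(w)N\rho(w)^{-1}=||w||N$ forces $N$ to preserve the isotypic decomposition by cuspidal lines of $W_E$; since Lemma~\ref{cuspfac}(1) guarantees that the $\rec^\circ(\sigma_j)$'s lie on pairwise disjoint $W_E$-cuspidal lines, the restricted $N$ is block-diagonal in the index $j$, and within the $j$-th block acts precisely as the nilpotent operator of $\bigoplus_i\tau([\nu^{'a_i}\rec^\circ(\sigma_j),\nu^{'b_i}\rec^\circ(\sigma_j)])$. Reassembling via~\eqref{eq:rec} then yields $\rec_E({\rm bc}_{E/F}(\pi)) = \bigoplus_{j=1}^t\rec_E(\pi_{(\sigma_j)})$.

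Finally, to convert the direct sum on the Weil--Deligne side into the product on the representation side, I would verify that $\pi_{(\sigma_1)}\times\cdots\times\pi_{(\sigma_t)}$ is already irreducible: its constituent multi-sets of segments are supported on pairwise disjoint cuspidal lines, so no two segments belonging to distinct factors can be linked, and the standard irreducibility criterion for parabolically induced representations of $\GL_n$ applies; Lemma~\ref{bcprod} then identifies $\rec_E$ of this irreducible product with the direct sum $\bigoplus_j \rec_E(\pi_{(\sigma_j)})$, matching the computation above. The automorphic induction case goes through by the same template, invoking Lemma~\ref{patnai} in place of the restriction computation to obtain the block-diagonal form of the induced nilpotent operator. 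I expect the most delicate point to be the block-diagonal decomposition of the nilpotent operator under restriction; this is routine linear algebra once one exploits the disjointness of cuspidal lines supplied by Lemma~\ref{cuspfac}(1), but does require some care in the bookkeeping of isotypic components.
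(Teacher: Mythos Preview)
Your proposal is correct and follows essentially the same route as the paper: both arguments compute $\mathrm{res}_{E/F}(\rec_F(\pi))$ segment by segment via eq.~\eqref{eq:rec}, verify irreducibility of $\pi_{(\sigma_1)}\times\cdots\times\pi_{(\sigma_t)}$ from the disjointness of the cuspidal lines, and match the two Weil--Deligne representations. The only cosmetic difference is that the paper handles the nilpotent operator by directly rearranging the summands $V_{r,j}$ using the explicit description of $N(\Delta_i')$ as the shift map, whereas you first invoke the Weil--Deligne relation to get block-diagonality in $j$ and then (implicitly) use the same explicit form to identify each block; note also that what you cite as Lemma~\ref{bcprod} is really the additivity of $\rec$ under irreducible products (used inside that lemma's proof and a special case of eq.~\eqref{eq:rec}), not the lemma itself.
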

\begin{proof}
Let $\pi=L(\Delta_{1},\dots,\Delta_{s})$ and $\Pi= \pi_{(\sigma_{1})}\times \cdots \times \pi_{(\sigma_{t})}$. Note that, since $\sigma_{j}\neq \nu^{\alpha}\sigma_{j'}$ for any $\alpha\in \mathbb R$ if $j\neq j'$, the representation $\Pi$ is irreducible. We will show that ${\rm bc}_{E/F}(\pi)=\Pi$.

Let $\rec_{F}(\sigma)= \rho$ and $\rec_{E}(\sigma_{j})=\rho_{j}$ for $j=1,\dots,t$. Thus for any integer $r$ we have 
\begin{equation*}
(\nu_{F}^{'r}\rho)|_{W_{E}}=\nu_{E}^{'r}\rho_{1}\oplus\cdots\oplus\nu_{E}^{'r}\rho_{t}.
\end{equation*} 
Denote the representation space of $\nu_{F}^{'r}\rho$ by $V_{r}$ and that by $\nu^{'r}_{E}\rho_{j}$ by $V_{r,j}$. In other words, $V_{r}=\oplus_{j=1}^{t}V_{r,j}$ as a $W_{E}$-module. 

Set $\Delta_{i}=[\nu_{F}^{a_{i}}\sigma,\nu_{F}^{b_{i}}\sigma]$, $\Delta'_{i}=[\nu_{F}^{'a_{i}}\rho,\nu_{F}^{'b_{i}}\rho]$, $\Delta_{i,j}=[\nu_{E}^{a_{i}}\sigma_{j},\nu_{E}^{b_{i}}\sigma_{j}]$ and $\Delta'_{i,j}=[\nu_{E}^{'a_{i}}\rho_{j},\nu_{E}^{'b_{i}}\rho_{j}]$ (where $1\leq i \leq s$ and $1\leq j \leq t$). Then 
\begin{equation*}
{\rm rec}_{E}(\Pi)=\oplus_{j=1}^{t}(\oplus_{i=1}^{s}(\rho(\Delta'_{i,j}),N(\Delta'_{i,j}))).  
\end{equation*} 
Note that
\begin{equation*}
\begin{split}
{\rm res}_{E/F}({\rm rec}_{F}(\pi)) & = \oplus_{i=1}^{s}(\rho(\Delta'_{i})|_{W_{E}},N(\Delta'_{i}))\\
 & = \oplus_{i=1}^{s}((\oplus_{r=a_{i}}^{b_{i}}\nu^{'r}\rho)|_{W_{E}},N(\Delta'_{i}))\\
 & = \oplus_{i=1}^{s}((\oplus_{r=a_{i}}^{b_{i}}(\oplus_{j=1}^{t}\nu^{'r}\rho_{j})),N(\Delta'_{i}))\\
\end{split}
\end{equation*}

By the description of $N(\Delta'_{i})$ (provided in \S\ref{sss:classfn_wd_rep}), by rearranging the spaces $V_{r,j}$, we get that $N(\Delta'_{i})=\oplus_{j=1}^{t}N(\Delta'_{i,j})$ for every $i$. Thus we get that
\begin{equation*}
{\rm res}_{E/F}({\rm rec}_{F}(\pi))=\oplus_{i=1}^{s}(\oplus_{j=1}^{t}(\oplus_{r=a_{i}}^{b_{i}}\nu^{'r}\rho_{j}),\oplus_{j=1}^{t}N(\Delta'_{i,j})) = {\rm rec}_{E}(\Pi)
\end{equation*} 
and we obtain the first statement. 
\end{proof}

\subsubsection{Compatibility with the Zelevinsky involution} 
\begin{prop}\label{mainpr}
\begin{enumerate}
\item Let $\pi\in \mathcal A_{F}$ be a rigid representation. Then ${\rm bc}_{E/F}(\pi^{t})={\rm bc}_{E/F}(\pi)^{t}$. 
\item Let $\pi\in \mathcal A_{E}$ be a rigid representation. Then ${\rm ai}_{E/F}(\pi^{t})={\rm ai}_{E/F}(\pi)^{t}$. 
\end{enumerate}
\end{prop}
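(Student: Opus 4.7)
The strategy is to reduce the claim to the fact that the Zelevinsky involution distributes over products of irreducibles supported on pairwise disjoint cuspidal lines, invoking Lemma~\ref{strucpres} to unpack base change. Write $\pi = L(\mathfrak{m}_{(\sigma)})$ with $\mathfrak{m} = \{[a_1,b_1],\ldots,[a_s,b_s]\}$ and $\sigma\in\mathcal{A}_F^\circ$ satisfying $\supp(\pi)\subset\sigma^{\mathbb{Z}}$. By the MW algorithm recalled in \S \ref{mwalgo}, there is a rigid multi-set of intervals $\mathfrak{n}$, constructed combinatorially from the integer endpoints $(a_i,b_i)$ and hence independent of the choice of cuspidal line, such that for every cuspidal $\sigma'$ one has $L(\mathfrak{m}_{(\sigma')})^t = L(\mathfrak{n}_{(\sigma')})$. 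In particular $\pi^t = L(\mathfrak{n}_{(\sigma)})$.

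Writing $\mathrm{bc}_{E/F}(\sigma) = \sigma_1\times\cdots\times\sigma_t$ as in Lemma~\ref{cuspfac}(1), Lemma~\ref{strucpres}(1) applied to $\pi$ and to $\pi^t$ yields
\[
\mathrm{bc}_{E/F}(\pi) = \prod_{i=1}^t L(\mathfrak{m}_{(\sigma_i)}), \qquad \mathrm{bc}_{E/F}(\pi^t) = \prod_{i=1}^t L(\mathfrak{n}_{(\sigma_i)}).
\]
So the proof reduces to verifying the identity $\bigl(\prod_i L(\mathfrak{m}_{(\sigma_i)})\bigr)^t = \prod_i L(\mathfrak{n}_{(\sigma_i)})$.

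For the last step, because the $\sigma_i^{\mathbb{Z}}$ are pairwise disjoint, no segment drawn from $\mathfrak{m}_{(\sigma_i)}$ is linked to a segment from $\mathfrak{m}_{(\sigma_j)}$ for $i\neq j$. Concatenating standard-form orderings on each cuspidal line produces a standard-form ordering on the disjoint union, giving $\prod_i L(\mathfrak{m}_{(\sigma_i)}) = L(\bigsqcup_i \mathfrak{m}_{(\sigma_i)})$ and the analogous equality for $Z$. Since the MW algorithm operates independently on each cuspidal line, $(\bigsqcup_i \mathfrak{m}_{(\sigma_i)})^t = \bigsqcup_i \mathfrak{n}_{(\sigma_i)}$; feeding this through the relation $Z(\cdot)=L(\cdot^t)$ closes the chain. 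The main technical hurdle is the careful bookkeeping needed to justify the distributivity of the Zelevinsky involution over such products; this is standard but requires some care with standard-form orderings. The automorphic induction case is handled verbatim, substituting parts~(2) of Lemmas~\ref{cuspfac} and~\ref{strucpres} for parts~(1).
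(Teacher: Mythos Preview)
Your proof is correct and follows essentially the same route as the paper: both apply Lemma~\ref{strucpres} to $\pi$ and $\pi^t$, use the fact from \cite{MW} that the Zelevinsky involution on a rigid multi-set depends only on the integer endpoints and not on the cuspidal line (so $(\pi_{(\sigma_i)})^t=(\pi^t)_{(\sigma_i)}$), and then pull the involution through the product. The only cosmetic difference is in the last step: the paper simply invokes the known fact that the Zelevinsky involution is a ring homomorphism of the Grothendieck ring, whereas you argue the needed distributivity by hand via concatenated standard-form orderings and the line-by-line behavior of the MW algorithm; your argument is a direct proof of the special case the paper cites as a black box.
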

\begin{proof}
Suppose that ${\rm Supp}\ \pi\subset \sigma^{\mathbb Z}$ for some $\sigma\in \mathcal A_{F}^{\circ}$. Since the Zelevinsky involution of a representation preserves its cuspidal support, we have $\supp(\pi^{t})\subset \sigma^{\mathbb Z}$. It was shown in \cite{MW} that the action of the Zelevinsky involution on rigid representations is `oblivious' to the cuspidal line on which it is supported. In other words, $(\pi_{(\sigma_{i})})^{t}=(\pi^{t})_{(\sigma_{i})}$ for all $\sigma_{i}$. Using Lemma \ref{strucpres}(1) and the fact that Zelevinsky involution is a homomorphism of the Grothendieck ring of the general linear groups we get that  
\begin{equation*}
{\rm bc}_{E/F}(\pi^{t})=\prod_{i=1}^{t}(\pi^{t})_{(\sigma_{i})}=\prod_{i=1}^{t}(\pi_{(\sigma_{i})})^{t}={\rm bc}_{E/F}(\pi)^{t}.
\end{equation*}
\end{proof}

\section{Ladder representations}\label{s_lad}
The class of ladder representations was introduced in \cite{LM}. This class of irreducible representations has many interesting properties, for instance these are precisely the representations in the class of rigid representations whose Jacquet modules are semi-simple (see \cite[Corollary 4.11]{G}). Furthermore the Jacquet modules of a ladder representation are calculated explicitly in \cite[Corollary 2.2]{KL12}. Moreover, this class is preserved by the Zelevinsky involution and the algorithm provided in \cite{MW} to compute the Zelevinsky involution of an irreducible representation takes a much simpler form when the representation is a ladder (see \cite[\S 3]{LM}). Some of these structural properties make this class more approachable in comparison to the entire admissible dual for the purpose of distinction problems (for instance see \cite{MOS}). However the aforementioned properties will not play a direct role in this article. 
 
We will now recall the definition of ladder representations and collect some basic facts about them that we were going to use in this article. We will show that the rigid representations that are irreducibly induced from ladder representations remain in the class of representations irreducibly induced from ladders, under the two maps.
\subsection{Preliminaries on ladder and unitarizable representations} 
For now suppose $F$ to be an arbitrary non-archimedean local field.
\subsubsection{Definition of ladders and proper ladders}\label{sss: ladder}
\begin{definition} 
Let $\sigma\in \mathcal A_{F}^{\circ}$. Let the set $\mathfrak m=\{\Delta_1,\dots,\Delta_k\}$ be such that $\supp(\mathfrak m)\subset \sigma^{\mathbb Z}$ and write $\Delta_{i}=[\nu_{F}^{a_{i}}\sigma,\nu_{F}^{b_{i}}\sigma]$ ($a_{i},b_{i}\in \mathbb Z$). By renumbering the segments if required, we can assume that $a_{1}\geq\cdots \geq a_{k}$. Then $\mathfrak m$ is called a \emph{ladder} if 
\[
a_{1}>\cdots>a_{k}\ \ \  \text{and} \ \ \ b_{1}>\cdots>b_{k}. 
\]
It is called a \emph{proper ladder} if furthermore, $a_{i}\le b_{i+1}+1$ for all $i=1,\dots,k-1$. 
\end{definition}

\begin{definition}
\begin{enumerate}
\item A representation $\pi\in \aaa_{F}$ is called a ladder representation if $\pi=L(\mathfrak m)$ where $\mathfrak m$ is a ladder.
\item A representation $\pi\in \aaa_{F}$ is called a proper ladder representation if $\pi=L(\mathfrak m)$ where $\mathfrak m$ is a proper ladder.
\end{enumerate}
\end{definition}

\begin{example}
Let $\sigma\in \mathcal A_{F}^{\circ}$, $\mathfrak m_{1}=\{[2,3],[0,1]\}_{(\sigma)}$, and $\mathfrak m_{2}=\{[3,4],[0,1]\}_{(\sigma)}$. The multi-sets $\mathfrak m_{1}$ and $\mathfrak m_{2}$ are example of ladders of which only $\mathfrak m_{1}$ is a proper ladder.
\end{example}

Whenever we say that $\mult=\{\Delta_1,\dots,\Delta_k\}\in \sigma^{\mathbb Z}$ is a ladder or a proper ladder, we implicitly assume that $\mult$ is already ordered as in the definition above, namely so that $a_{1}>\dots > a_{k}$ where $b(\Delta_{i})=\nu^{a_{i}}\sigma$.

We will denote the subset of ladder representations of $\aaa_{F}$ by $\mathcal L=\mathcal L_{F}$ and the subset of proper ladders by $\mathcal L_{p}=\mathcal L_{p,F}$. The class of representations irreducibly induced from ladders will be denoted by $\mathcal L_{{\rm ind}}=\mathcal L_{{\rm ind},F}$.

\subsubsection{}
The next proposition follows directly from \cite[Theorem 16]{LM}.
\begin{prop}\label{prop: prop}
Let $\pi\in \mathcal L_{{\rm ind}}$. Then $\pi$ can be written as $\pi_{1}\times \cdots \times \pi_{k}$ where each $\pi_{i}$ is a proper ladder. The decomposition is unique up to a reordering of the $\pi_{i}$.
\end{prop}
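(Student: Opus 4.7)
The plan is to deduce both existence and uniqueness from Theorem 16 of \cite{LM}, which gives a combinatorial criterion, in terms of linkage of segments, for the irreducibility of a parabolically induced product of ladder representations.

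For existence, I would induct on the total number of segments appearing in the given presentation of $\pi$. Write $\pi = L(\mathfrak{m}_1) \times \cdots \times L(\mathfrak{m}_r)$ irreducibly with each $\mathfrak{m}_i$ a ladder. If some $\mathfrak{m}_i$ is not proper then, in its standard ordering (strictly decreasing beginnings), there is an index $j$ with $a_j > b_{j+1}+1$. Split $\mathfrak{m}_i = \mathfrak{m}_i' \sqcup \mathfrak{m}_i''$ at that gap; both pieces are strictly smaller ladders, and no segment of $\mathfrak{m}_i'$ is linked with a segment of $\mathfrak{m}_i''$ because the gap is too wide. Theorem 16 of \cite{LM} then yields that $L(\mathfrak{m}_i') \times L(\mathfrak{m}_i'')$ is irreducible and, by matching Zelevinsky data, equals $L(\mathfrak{m}_i)$, and that substituting this refinement into the presentation of $\pi$ preserves irreducibility, since every new pair of ladder factors arising from the substitution already satisfies the required unlinkedness. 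Iterating the splitting on each non-proper factor and each non-proper sub-ladder it produces terminates after finitely many steps and delivers a presentation of $\pi$ as an irreducible product of proper ladders.

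For uniqueness, suppose $\pi = L(\mathfrak{m}_1) \times \cdots \times L(\mathfrak{m}_k) = L(\mathfrak{n}_1) \times \cdots \times L(\mathfrak{n}_l)$ are two such presentations. Both products must equal $L(\mathfrak{m})$ for a common $\mathfrak{m} \in \mathcal{O}$, so $\bigsqcup_i \mathfrak{m}_i = \mathfrak{m} = \bigsqcup_j \mathfrak{n}_j$ as multi-sets of segments. Since every proper ladder is supported on a single cuspidal line, one reduces to the case in which $\mathfrak{m}$ is supported on one line $\sigma^{\mathbb{Z}}$. I would then argue that any partition of $\mathfrak{m}$ into proper ladders whose induced product is irreducible must coincide with the canonical partition obtained by grouping segments into maximal chains with strictly decreasing $a$'s and $b$'s and consecutive gaps at most one. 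The key point is that if two proper ladders in a given partition could be concatenated into a single proper ladder, or a segment could be exchanged between them to yield a different proper-ladder partition, then Theorem 16 would produce a linkage between the corresponding factors that obstructs irreducibility of the product.

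The main obstacle is the uniqueness step: making rigorous the assertion that the canonical partition is the \emph{only} partition into proper ladders giving an irreducible product. Existence is a clean inductive splitting guided by gaps with Theorem 16 invoked at each step, whereas uniqueness requires a careful case analysis on each cuspidal line to show that any alternative grouping introduces a linkage incompatible with the irreducibility criterion; once this has been unpacked into the combinatorics of segments, the statement follows.
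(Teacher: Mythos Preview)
Your approach is essentially the paper's: the paper does not prove this proposition at all but simply records that it ``follows directly from \cite[Theorem 16]{LM}.'' You invoke the same result and supply a sketch of how the deduction would go, which is more detail than the paper provides. Your existence argument via gap-splitting is correct; your uniqueness sketch is admittedly incomplete (as you note), but the paper offers nothing further here either, so there is no divergence to report.
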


\subsubsection{Tadi{\'c}'s classification of unitarizable representations}\label{sss:tadic}
An important example of a proper ladder representation is when $a_{i}=a_{i+1}+1$ and $b_{i}=b_{i+1}+1$ for $i=1,\dots,k-1$. Define a Speh representation to be a proper ladder such that the underlying multi-set satisfies this property. Notice that we are not assuming that Speh representations are unitarizable in general. We will use the term {\it unitarizable Speh} in this paper for a Speh representation that lies in $\aaa_{F}^{u}$. 

For a unitarizable Speh representation $\tau$ and a real number $\alpha\in (-\frac{1}{2},\frac{1}{2})$, define $\pi(\tau,\alpha)$ to be the representation $\nu^{\alpha}\tau\times \nu^{-\alpha}\tau$. By \cite[Proposition 8.5]{Z} it is irreducible. We now recall the classification of the unitarizable representations of general linear groups (see \cite[Theorem D]{Ta}).  
\begin{theorem}\label{thm:tadic}
\begin{enumerate}
\item The representations $\pi(\tau,\alpha)$ lie in $\aaa_{F}^{u}$. 
\item Every representation $\pi\in\aaa_{F}^{u}$ can be written as 
\begin{equation*}
\pi=\pi_{1}\times \cdots \times \pi_{t} 
\end{equation*}
where each $\pi_{i}$ is either a unitarizable Speh representations or a representation of the form $\pi(\tau,\alpha)$. 
\end{enumerate}
\end{theorem}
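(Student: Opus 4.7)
The plan is to establish Tadić's classification via the standard two-step approach: first produce the unitarizable representations explicitly, then show exhaustion.

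For part (1), I would first show that unitarizable Speh representations lie in $\aaa_{F}^{u}$. My approach is a deformation argument in the spirit of Bernstein's work on the unitary dual: starting from a unitary cuspidal $\sigma$, consider the family $\nu^{s_{1}}\sigma\times\cdots\times\nu^{s_{k}}\sigma$ of induced representations parameterized by $(s_{1},\dots,s_{k})\in\mathbb{C}^{k}$. Standard intertwining operators together with the invariant pairing on $\sigma$ interpolate a family of Hermitian forms; along the purely imaginary axis the forms are positive definite by unitary induction, and one analytically continues while tracking the signature along a path to the Speh locus $s_{i}-s_{i+1}=1$, ruling out a sign change on the relevant Langlands quotient. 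For $\pi(\tau,\alpha)$ with $\alpha\in(-1/2,1/2)$, irreducibility follows from Zelevinsky's linking criterion applied to the ladder segments of $\tau$; the Hermitian form coming from the intertwining operator is positive at $\alpha=0$ (unitarily induced) and remains positive throughout the open interval because reducibility first appears at $|\alpha|=1/2$. This is the textbook complementary-series argument.

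For part (2) (exhaustion), I would proceed by induction on $n$. First reduce to the rigid case: any $\pi\in\aaa_{F}^{u}$ decomposes via parabolic induction along its cuspidal support into representations supported on distinct cuspidal lines, and such induction preserves both irreducibility and unitarizability when the lines are pairwise non-related. On each rigid component, the task becomes constraining the Langlands data $L(\mathfrak{m})$. The structural results on ladder representations—in particular Proposition \ref{prop: prop} and the fact that ladders form the rigid representations with semisimple Jacquet modules—together with a Jantzen-style signature analysis of Hermitian forms on standard modules, should force $\mathfrak{m}$ into the ``rectangular'' shape characterizing unitarizable Speh representations, possibly twisted by a complementary-series parameter. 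Bernstein's theorem on preservation of the unitary dual under irreducible parabolic induction then assembles the global picture.

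The main obstacle is part (2): ruling out exotic unitarizable representations. Even with part (1) in hand, it is far from obvious that no other multi-sets $\mathfrak{m}$ yield unitarizable $L(\mathfrak{m})$. The technical heart is the signature computation on standard modules whose Langlands data do not have Speh shape, and showing these signatures are forced to be indefinite outside the stated family. This requires explicit information about the poles of normalized intertwining operators and the Plancherel measure, and is where Tadić's original argument invests its deepest work; a cleaner modern proof would presumably leverage Arthur's endoscopic classification, but even then the bookkeeping of which Arthur parameters produce unitary packets for $\GL_{n}$ essentially recovers the Speh-plus-complementary-series list.
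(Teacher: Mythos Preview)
The paper does not prove this theorem at all: it is merely quoted as a known result, with the citation ``see \cite[Theorem D]{Ta}'' to Tadi\'c's original paper. So there is nothing to compare your proposal against in terms of the paper's own argument.

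That said, your sketch is a reasonable high-level outline of how the literature approaches this deep result, but it is far from a proof. Part (1) is essentially correct in spirit, though the unitarizability of Speh representations themselves is a substantial theorem (this is Tadi\'c's conjecture (U0), later proved by Bernstein and, independently, via Kirillov's conjecture). Your description of the complementary series argument for $\pi(\tau,\alpha)$ is accurate. Part (2), as you yourself note, is where the real difficulty lies, and your proposal does not come close to filling the gap: the phrase ``a Jantzen-style signature analysis\ldots should force $\mathfrak{m}$ into the `rectangular' shape'' is precisely the hard content of Tadi\'c's paper, and invoking Proposition~\ref{prop: prop} or the semisimplicity of Jacquet modules of ladders does not help here, since an arbitrary unitarizable representation need not be a ladder or even lie in $\mathcal{L}_{\mathrm{ind}}$ a priori---that is exactly what one is trying to prove. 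For the purposes of this paper, you should simply cite the result rather than attempt to reprove it.
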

In particular, $\aaa_{F}^{u}\subset \mathcal L_{{\rm ind}}$.  

\subsection{Base change, automorphic induction and ladder representations}
We return to the case when $E/F$ is a cyclic extension of characteristic different than two such that $d=[E:F]$ is prime.
\subsubsection{}
\begin{prop}\label{unlad}
\begin{enumerate}
\item Let $\pi\in \mathcal L_{{\rm ind},F}$ be a rigid representation. Then ${\rm bc}_{E/F}(\pi)\in \mathcal L_{{\rm ind},E}$. 
\item Let $\pi\in \mathcal L_{{\rm ind},E}$ be a rigid representation. Then ${\rm ai}_{E/F}(\pi)\in \mathcal L_{{\rm ind},F}$. 
\end{enumerate}
\end{prop}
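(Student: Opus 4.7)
The plan is to combine Proposition~\ref{prop: prop} with Lemma~\ref{strucpres} and reduce the claim to a combinatorial irreducibility question about products of proper ladders on a fixed cuspidal line. First I would write $\pi = \pi_1 \times \cdots \times \pi_k$ with each $\pi_i = L(\mathfrak{m}_i)$ a proper ladder, by Proposition~\ref{prop: prop}, and use rigidity to fix a common $\sigma \in \mathcal{A}_F^\circ$ so that $\supp(\mathfrak{m}_i) \subset \sigma^{\mathbb{Z}}$ for all $i$. By Lemma~\ref{cuspfac}(1) one has ${\rm bc}_{E/F}(\sigma) = \sigma_1 \times \cdots \times \sigma_t$ with the $\sigma_j$ pairwise non-equivalent up to twist, so the cuspidal lines $\sigma_j^{\mathbb{Z}}$ are pairwise disjoint.

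Applying Lemma~\ref{strucpres}(1) directly to $\pi$ yields ${\rm bc}_{E/F}(\pi) = \pi_{(\sigma_1)} \times \cdots \times \pi_{(\sigma_t)}$, where $\pi_{(\sigma_j)} = L\bigl(\bigsqcup_{i} (\mathfrak{m}_i)_{(\sigma_j)}\bigr)$; since the proper-ladder inequalities on the segments involve only their integer parameters and not the supporting cuspidal, each $(\mathfrak{m}_i)_{(\sigma_j)}$ is again a proper ladder multi-set on $\sigma_j^{\mathbb{Z}}$. Consequently, to conclude that each $\pi_{(\sigma_j)}$, and hence ${\rm bc}_{E/F}(\pi)$, lies in $\mathcal{L}_{{\rm ind},E}$, it suffices to show that the product $L((\mathfrak{m}_1)_{(\sigma_j)}) \times \cdots \times L((\mathfrak{m}_k)_{(\sigma_j)})$ is irreducible for each fixed $j$.

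The hard part will be this last irreducibility. The key observation I would appeal to is that the irreducibility criterion for a product of ladder representations supported on a single cuspidal line, as developed in \cite{LM}, is a purely combinatorial condition on the $(a,b)$-coordinates of the underlying segments and does not depend on the choice of supporting cuspidal representation. Since $\pi_1 \times \cdots \times \pi_k$ is irreducible on the line $\sigma^{\mathbb{Z}}$ by hypothesis, the same combinatorial conditions remain in force after replacing $\sigma$ by any $\sigma_j$, giving irreducibility on each transferred line. Combining these facts across $j$ (where the different factors already sit on disjoint cuspidal lines, so no cross-linkage is possible) realizes ${\rm bc}_{E/F}(\pi)$ as an irreducible product of proper ladders. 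The automorphic induction statement (2) follows by the verbatim argument using Lemma~\ref{cuspfac}(2) and Lemma~\ref{strucpres}(2) in place of their base change analogues.
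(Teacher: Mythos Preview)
Your proposal is correct and follows the same approach as the paper, which reads in its entirety: ``The result is obtained by a direct application of Lemma~\ref{cuspfac} and Lemma~\ref{strucpres}.'' You have simply unpacked what the paper regards as immediate: the transfer $\pi\mapsto\pi_{(\sigma_j)}$ preserves all the integer data of the underlying multi-set, hence preserves both the ladder/proper-ladder property and the irreducibility of the product. Your explicit appeal to the combinatorial irreducibility criterion from \cite{LM} is exactly the mechanism the paper itself invokes later, in the proof of Corollary~\ref{Klyachkoai_cor}, where it cites \cite[Lemma~5.17, Proposition~5.20, Lemma~5.21]{LM1} to justify that $(\pi_1)_{(\sigma')}\times\cdots\times(\pi_k)_{(\sigma')}$ is irreducible whenever $\pi_1\times\cdots\times\pi_k$ is.
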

\begin{proof}
The result is obtained by a direct application of  Lemma \ref{cuspfac} and Lemma \ref{strucpres}.
\end{proof}

\subsubsection{} The necessity of the `rigidity' hypothesis in Proposition \ref{unlad} is shown by the following example. Let $\pi=L([\nu_{F}^{2},\nu_{F}^{3}],[1,\nu_{F}^{2}],[\kappa_{E/F}])$. It is easy to see that ${\rm bc}_{E/F}(\pi)=L([\nu_{E}^{2},\nu_{E}^{3}],[1,\nu_{E}^{2}],[\nu_{E}])$ is not in $\mathcal L_{{\rm ind},E}$. One can easily construct similar examples to demonstrate the failure of the statement without rigidity in the case of automorphic induction as well.

\subsubsection{}The hypothesis of rigidity can be removed from the above statements if we further assume that the representations we are dealing with are unitarizable.  

\begin{prop}\label{untri}
\begin{enumerate}
\item Let $\pi\in \mathcal A_{F}^{u}$. Then ${\rm bc}_{E/F}(\pi)\in \mathcal A_{E}^{u}$. 
\item Let $\pi\in \mathcal A_{E}^{u}$. Then ${\rm ai}_{E/F}(\pi)\in \mathcal A_{F}^{u}$.
\end{enumerate}
\end{prop}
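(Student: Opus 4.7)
The plan is to combine Tadi\'c's classification (Theorem \ref{thm:tadic}) of the unitarizable dual with the results on rigid representations from Section \ref{defbcai}, reducing the problem to verifying that the base change (respectively automorphic induction) of each of the two Tadi\'c building blocks -- unitarizable Speh representations and representations of the form $\pi(\tau,\alpha)$ -- is itself a product of unitarizable building blocks on the other side. Once this is shown, the conclusion follows because a product of Tadi\'c building blocks on pairwise disjoint cuspidal lines is irreducible and unitarizable.

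For (1), I would begin by applying Theorem \ref{thm:tadic}(2) to write $\pi=\pi_{1}\times\cdots\times\pi_{t}$ with each $\pi_{i}$ a building block of one of the two types. If $\pi_{i}$ is a unitarizable Speh, it is rigid with ${\rm Supp}(\pi_{i})\subset\sigma^{\mathbb{Z}}$ for a unitary cuspidal $\sigma$, so Lemma \ref{strucpres}(1) gives
\[
{\rm bc}_{E/F}(\pi_{i})=\pi_{i,(\sigma_{1})}\times\cdots\times\pi_{i,(\sigma_{s})},
\]
where $\sigma_{1},\dots,\sigma_{s}$ are the constituents of ${\rm bc}_{E/F}(\sigma)$; by Lemma \ref{cuspfac}(3) each $\sigma_{j}$ is unitary cuspidal, and since $\pi_{i,(\sigma_{j})}$ is defined by the same multiset of segment exponents as $\pi_{i}$, it is a unitarizable Speh as well. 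For a building block $\pi(\tau,\alpha)=\nu_{F}^{\alpha}\tau\times\nu_{F}^{-\alpha}\tau$, the extra ingredient will be the twist compatibility ${\rm bc}_{E/F}(\nu_{F}^{\alpha}\pi')=\nu_{E}^{\alpha}{\rm bc}_{E/F}(\pi')$, which on the Weil--Deligne side reduces to the identity $\|\cdot\|_{F}|_{W_{E}}=\|\cdot\|_{E}$ (verified on geometric Frobenii from the normalization of the reciprocity map in \S \ref{ss:not_pre}). Combining this with the Speh computation yields
\[
{\rm rec}_{E}({\rm bc}_{E/F}(\pi(\tau,\alpha)))=\bigoplus_{j=1}^{s}{\rm rec}_{E}\bigl(\pi(\tau_{(\sigma_{j})},\alpha)\bigr),
\]
so ${\rm bc}_{E/F}(\pi(\tau,\alpha))=\prod_{j=1}^{s}\pi(\tau_{(\sigma_{j})},\alpha)$ with each factor a unitarizable building block by Theorem \ref{thm:tadic}(1).

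Putting the pieces together, ${\rm bc}_{E/F}(\pi)$ is a product of Tadi\'c building blocks on pairwise disjoint cuspidal lines, hence irreducible and in $\mathcal A_{E}^{u}$. The proof of (2) is a verbatim translation using Lemma \ref{strucpres}(2), Lemma \ref{cuspfac}(2)--(3), and the automorphic-induction analogue of the twist compatibility. The main obstacle I would expect is exactly this twist-commutation identity, which is what allows one to treat the non-rigid building blocks $\pi(\tau,\alpha)$ (to which Lemma \ref{strucpres} does not apply directly); once that is in place, irreducibility of all intermediate products is automatic from the disjointness of the cuspidal lines involved.
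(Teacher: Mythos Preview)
Your overall strategy---reduce via Tadi\'c's classification to the two building blocks, handle each using Lemma \ref{cuspfac} and Lemma \ref{strucpres}, then reassemble---is exactly the paper's approach, and your treatment of each individual building block is correct (the twist compatibility you isolate is indeed the point for $\pi(\tau,\alpha)$).

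The gap is in the final assembly. You assert that ${\rm bc}_{E/F}(\pi)$ is a product of Tadi\'c building blocks \emph{on pairwise disjoint cuspidal lines}, and deduce irreducibility from that. This is not true in general: the Tadi\'c decomposition $\pi=\pi_{1}\times\cdots\times\pi_{t}$ places no disjointness constraint on the supports of the $\pi_{i}$, so two different $\pi_{i}$ may well live on the same cuspidal line $\sigma^{\mathbb Z}$, and then their base changes are supported on the very same collection $\sigma_{1}^{\mathbb Z},\dots,\sigma_{s}^{\mathbb Z}$. (Disjointness does hold \emph{within} each ${\rm bc}_{E/F}(\pi_{i})$ by Lemma \ref{cuspfac}, which is why your per-block analysis goes through, but not across different $i$.) Without irreducibility of $\prod_{i}{\rm bc}_{E/F}(\pi_{i})$ you cannot invoke Lemma \ref{bcprod}(1) to identify this product with ${\rm bc}_{E/F}(\pi)$, nor conclude unitarizability.

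The fix is the one the paper uses: once you know each ${\rm bc}_{E/F}(\pi_{i})\in\mathcal A_{E}^{u}$, irreducibility of their product follows from Bernstein's theorem that parabolic induction of unitarizable representations of general linear groups is irreducible (this is implicit in Tadi\'c's classification). With that in hand, Lemma \ref{bcprod}(1) applies and the product is again unitarizable.
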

\begin{proof}
Suppose $\pi\in\aaa_{F}^{u}$. Write $\pi=\pi_{1}\times \cdots\times \pi_{k}$ such that each $\pi_{i}$ is either a unitarizable Speh representation or a representation of the form $\pi(\tau_{i},\alpha_{i})$ for some unitarizable Speh $\tau_{i}$ and some $\alpha_{i}\in (-\frac{1}{2},\frac{1}{2})$. If $\pi_{i}$ is a unitarizable Speh then it is clear by Lemma \ref{cuspfac}, and Lemma \ref {strucpres} that ${\rm bc}_{E/F}(\pi_{i})\in \mathcal A_{E}^{u}$. Suppose that $\pi_{i}=\nu^{\alpha_{i}}\tau_{i}\times \nu^{-\alpha_{i}}\tau_{i}$. By  Lemma \ref{cuspfac}, and Lemma \ref {strucpres}, $\nu^{\alpha_{i}}{\rm bc}_{E/F}(\tau_{i})\times \nu^{-\alpha_{i}}{\rm bc}_{E/F}(\tau_{i})$ is a product of Speh representations supported on different cuspidal lines, and is thus irreducible. Hence by Lemma \ref{bcprod}(1), ${\rm bc}_{E/F}(\pi_{i})=\nu^{\alpha_{i}}{\rm bc}_{E/F}(\tau_{i})\times \nu^{-\alpha_{i}}{\rm bc}_{E/F}(\tau_{i})$ which is again unitarizable by Theorem \ref{thm:tadic}. Thus we have ${\rm bc}_{E/F}(\pi_{i})\in \mathcal A_{E}^{u}$ for each $i$. 

Since a representation induced from unitarizable representations is irreducible, appealing to Lemma \ref{bcprod}(1), we get that ${\rm bc}_{E/F}(\pi)={\rm bc}_{E/F}(\pi_{1})\times \cdots\times {\rm bc}_{E/F}(\pi_{k})$. Since the induced representation is also unitarizable, this proves (1). 
\end{proof}

\section{Degenerate Whittaker models}\label{relsl}
We now study the degenerate Whittaker models and their relationships with the two maps. 
\subsection{Definition of degenerate Whittaker models}
We briefly recall the definition of degenerate Whittaker models as provided in \cite[\S 8.3]{Z}. Given a composition ${\bf d}=(\lambda_{1},\dots,\lambda_{l})$ of $n$ ordered such that $\lambda_{1}\geq \cdots\geq \lambda_{l}$, define the character $\theta=\theta_{{\bf d}}$ of $U_{n}$ by $\theta((u_{i,j}))=\psi(\sum u_{i,i+1})$ where $i$ runs over $1,\dots,n-1$ except
\begin{equation*}
n-\lambda_{1}, n-(\lambda_{1}+\lambda_{2}),\dots, n-(\lambda_{1}+\cdots+\lambda_{l-1}).
\end{equation*}
(See \S\ref{sss:gen} for the definition of $U_{n}$ and $\psi$.) Say that a representation $\pi\in \aaa_{F}(n)$ has a degenerate Whittaker model with respect to the sequence ${\bf d}$ if $\Hom_{U_{n}}(\pi, \theta_{{\bf d}})\neq 0$.

It was shown in \cite[Corollary 8.3]{Z} that every $\pi\in \aaa_{F}$ has a degenerate Whittaker model. 

\subsection{The depth sequence and $SL(2)$-type of an irreducible representation}
For $\pi\in \Alg({\rm GL}_{n}(F))$ and any $r=0,\dots,n$ we denote by $\pi^{(r)}$ the $r$-th derivative of $\pi$ as defined in \cite[\S 3.5 and \S 4.3]{BZ1}. It is a functor from $\Alg({\rm GL}_{n}(F))$ to $\Alg({\rm GL}_{n-r}(F))$. If the integer $r$ is such that $\pi^{(r)}\neq 0$ and $\pi^{(r+k)}=0$ for any $k\in \mathbb Z_{>0}$, then we call the representation $\pi^{(r)}$ the {\it highest derivative} of $\pi$ and the integer $r$ the {\it depth} of $\pi$.

\subsubsection{Definition of a depth sequence}\label{sss:def_depth} Given $\pi\in \aaa_{F}(n)$, we recursively define the irreducible representations $\tau_{0},\tau_{1},\dots,\tau_{l}$ and an integer sequence ${\bf d}(\pi)=(\lambda_{1},\dots, \lambda_{l})$ such that $\tau_{0}=\pi$, $\tau_{l}$ is the trivial representation of the trivial group, and $\tau_{i+1}:=\tau_{i}^{(\lambda_{i+1})}$ is the highest derivative of $\tau_{i}$ ($i=0,\dots, l-1$). We call this sequence the {\it depth sequence} of the irreducible representation $\pi$. Clearly $\lambda_{1}+\cdots+\lambda_{l}=n$ and by \cite[Theorem 8.1]{Z} we get that $\lambda_{1}\geq \cdots\geq \lambda_{l}$. Thus any depth sequence of an element of $\aaa_{F}(n)$ can be identified with an element of $\mathcal P(n)$. 

\subsubsection{}We now recall \cite[Corollary 8.3]{Z}.  
\begin{theorem}
Every $\pi\in \aaa_{F}$ has a degenerate Whittaker model with respect to its depth sequence ${\bf d}(\pi)$ with multiplicity one.
\end{theorem}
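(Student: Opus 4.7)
The plan is to establish both the existence and multiplicity one statements simultaneously by induction on the length $l$ of the depth sequence ${\bf d}(\pi) = (\lambda_{1},\dots,\lambda_{l})$, exploiting the recursive definition $\tau_{i+1} = \tau_{i}^{(\lambda_{i+1})}$ given in \S\ref{sss:def_depth}. The central bridge between degenerate Whittaker models and Bernstein-Zelevinsky derivatives is a Frobenius reciprocity identification: unpacking the definition of the character $\theta_{{\bf d}}$ (which factors through the unipotent subgroup opposite to the block of size $\lambda_{1}$ in a compatible manner) together with the definition of $\pi^{(\lambda_{1})}$ as a twisted Jacquet functor, one obtains a canonical isomorphism
\begin{equation*}
\Hom_{U_{n}}(\pi,\theta_{{\bf d}}) \;\cong\; \Hom_{U_{n-\lambda_{1}}}\bigl(\pi^{(\lambda_{1})}, \theta_{{\bf d}'}\bigr),
\end{equation*}
where ${\bf d}' = (\lambda_{2},\dots,\lambda_{l})$.

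For the base case $l = 1$, the condition $\tau_{l} = \tau_{1}$ being the trivial representation of the trivial group forces $\lambda_{1} = n$, so that $\theta_{(n)} = \psi_{n}$; the non-vanishing of $\pi^{(n)}$ together with Zelevinsky's theorem that the highest derivative of an irreducible representation is irreducible (\cite[Theorem 8.1]{Z}) delivers both existence and multiplicity one on the nose. For the inductive step, observe that $\tau_{1} = \pi^{(\lambda_{1})}$ is again irreducible by the same theorem, and by construction its depth sequence is precisely $(\lambda_{2},\dots,\lambda_{l})$, of length $l-1$. Applying the induction hypothesis to $\tau_{1}$ gives $\dim\Hom_{U_{n-\lambda_{1}}}(\tau_{1},\theta_{{\bf d}'}) = 1$, and the displayed isomorphism then transports this conclusion to $\pi$.

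The principal obstacle will be establishing the Frobenius reciprocity isomorphism above at the critical cutoff $r = \lambda_{1}$: one has to simultaneously establish the complementary vanishing $\pi^{(r)} = 0$ and $\Hom_{U_{n}}(\pi,\theta_{{\bf c}}) = 0$ for every partition ${\bf c}$ of $n$ whose first part exceeds $\lambda_{1}$, so that $\lambda_{1}$ is intrinsically characterized as the largest first entry supporting a non-zero degenerate Whittaker functional. Technically this amounts to analyzing the mirabolic filtration of \cite{BZ1} against the character $\theta_{{\bf d}}$ and verifying that all but the top layer contribute zero to the Hom. Once this bookkeeping is complete the induction proceeds essentially automatically, and one recovers Zelevinsky's original argument in \cite[\S8]{Z}.
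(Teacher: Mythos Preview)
The paper does not supply a proof of this statement: it is merely recalled there as \cite[Corollary 8.3]{Z}. Your proposed argument is correct and is essentially Zelevinsky's original proof, as you yourself acknowledge in the last line. The Frobenius reciprocity isomorphism you display follows formally from the semidirect decomposition $U_n = U_{n-\lambda_1}\ltimes V_{\lambda_1}$ (with $V_{\lambda_1}$ the normal subgroup of matrices whose upper-left $(n-\lambda_1)\times(n-\lambda_1)$ block is the identity), together with the identification of the $(V_{\lambda_1},\theta_{\bf d}|_{V_{\lambda_1}})$-coinvariants of $\pi$ with the unnormalized $\pi^{(\lambda_1)}$; the normalizing twist in the Bernstein--Zelevinsky definition of the derivative is a character of $\GL_{n-\lambda_1}(F)$ trivial on $U_{n-\lambda_1}$, so it does not disturb the Hom space.

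One remark: your final paragraph overcomplicates the situation. The displayed isomorphism holds for \emph{any} first part $r$ and any compatible ${\bf d}$, with no vanishing hypotheses needed; the vanishing $\pi^{(r)}=0$ for $r>\lambda_1$ is built into the definition of the depth $\lambda_1$, and the theorem as stated does not ask you to characterize ${\bf d}(\pi)$ as the maximal composition supporting a non-zero degenerate Whittaker functional. So the ``principal obstacle'' you flag is not actually an obstacle for this particular statement---once you have the isomorphism and the irreducibility of the highest derivative from \cite[Theorem~8.1]{Z}, the induction runs without further input.
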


\subsubsection{Definition of an $SL(2)$-type} The $SL(2)$-type of an irreducible representation was first defined in \cite[Definition 1]{V} for the ones in the unitarizable dual. The definition was then extended to the admissible dual in \cite[Remark 2]{OS}. We recall it below.
\begin{defn}\label{defn: sl2}
Let $\pi\in\mathcal A_{F}$. Then the $SL(2)$-type of $\pi$ is defined to be the partition $P_{F}({\rm rec}_{F}(\pi^{t}))$ where $P_{F}$ is the map defined in \S\ref{sss:def_part}. It is denoted by $\mathcal V(\pi)$.
\end{defn}

\subsubsection{Relation between the two partitions} Given $\pi\in \aaa_{F}$, we will think of $\mathcal V(\pi)$ as a composition by ordering the elements of this partition in a non-increasing manner. For a composition $f$, denote by $f^{t}$ its conjugate composition. We have the following non-archimedean analogue of \cite[Theorem 2.4.2]{GOSS}:
\begin{lemma}\label{sl_depth}
For $\pi\in \mathcal A_{F}$. Then $\mathcal V(\pi)={\bf d}(\pi)^{t}$. 
\end{lemma}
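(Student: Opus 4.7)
The plan is to identify both $\mathcal V(\pi)$ and ${\bf d}(\pi)$ with combinatorial data read off from the Zelevinsky multi-set of $\pi$ and to observe that these two partitions are conjugate.

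Write $\pi = Z(\mathfrak n)$ in Zelevinsky form, with $\mathfrak n = \{\Delta_1,\dots,\Delta_s\}$, where each $\Delta_i = [\nu^{a_i}\sigma_i,\nu^{b_i}\sigma_i]$ is supported on $\sigma_i \in \aaa_F^\circ(k_i)$ and has length $\ell_i = b_i - a_i + 1$. Let $\mu$ denote the partition obtained by listing each $\ell_i$ with multiplicity $k_i$, namely $\mu = \sum_{i=1}^{s} (\ell_i)_{k_i}$.

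First I compute $\mathcal V(\pi)$. By the definition of the Zelevinsky involution recalled in \S\ref{mwalgo}, $\pi = Z(\mathfrak n)$ gives $\pi^t = L(\mathfrak n)$. Applying Lemma \ref{lem_def_part} to this Langlands presentation of $\pi^t$ yields
\begin{equation*}
\mathcal V(\pi) = P_F(\rec_F(\pi^t)) = \sum_{i=1}^{s}(\ell_i)_{k_i} = \mu.
\end{equation*}

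Next I compute ${\bf d}(\pi)$. The key input is \cite[Theorem 8.1]{Z}, which describes the highest derivative of a Zelevinsky-form representation explicitly: for $\pi = Z(\{\Delta_1,\dots,\Delta_s\})$ the highest derivative equals $Z(\{\Delta_i^{-} : \Delta_i^{-} \neq \emptyset\})$ with $\Delta_i^{-} = [\nu^{a_i}\sigma_i,\nu^{b_i-1}\sigma_i]$, and the depth is $\sum_{i : \Delta_i \neq \emptyset} k_i$. Iterating this procedure, after $j-1$ derivations only the segments originally of length $\geq j$ survive, so the $j$-th entry of the depth sequence is
\begin{equation*}
\lambda_j = \sum_{i : \ell_i \geq j} k_i,
\end{equation*}
which is precisely the height of the $j$-th column of the Young diagram of $\mu$. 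Hence ${\bf d}(\pi) = \mu^t$, and combining with the first computation gives $\mathcal V(\pi) = \mu = ({\bf d}(\pi))^t$.

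The only non-routine ingredient is the explicit formula for iterated highest derivatives of a Zelevinsky-form representation from \cite[Theorem 8.1]{Z}; the remainder is a direct combinatorial identification of $(\lambda_j)$ with the conjugate of $\mu$. The main thing to keep careful track of is the way segments of length one drop out at each step, which is exactly what implements the transposition of Young diagrams.
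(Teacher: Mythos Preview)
Your proof is correct and uses exactly the same ingredients as the paper, namely Lemma~\ref{lem_def_part} and \cite[Theorem~8.1]{Z}. The only difference is organizational: the paper first handles the rigid case and then reduces the general case to it via a decomposition into rigid factors on disjoint cuspidal lines (invoking \cite[Proposition~8.5]{Z} and the identity $(f_1+f_2)^t=f_1^t+_c f_2^t$), whereas you apply \cite[Theorem~8.1]{Z} directly to an arbitrary multi-segment and read off the conjugate partition, which is slightly more direct.
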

\begin{proof}
Let $\pi=Z(\mathfrak m)$. The statement for a rigid multi-set $\mathfrak m$ follows directly from Lemma \ref{lem_def_part} and \cite[Theorem 8.1]{Z}. For an arbitrary $\pi\in \mathcal A_{F}$, write $\pi=\pi_{1}\times \cdots \times \pi_{t}$ such that the $\pi_{i}$ are rigid and supported on cuspidal lines that are pairwise disjoint. By \cite[Proposition 8.5]{Z} we get that $\mathcal V(\pi)=\mathcal V(\pi_{1})+\cdots+\mathcal V(\pi_{t})$. By adding 0's at the end if necessary, assume that each composition $\mathcal V(\pi_{i})^{t}$ is of the same length. For any two compositions $f_{1}$ and $f_{2}$ of $n_{1}$ and $n_{2}$ respectively of same length, denote by $f_{1}+_{c}f_{2}$ the composition of $n_{1}+n_{2}$ given by coordinate wise addition. Since $(f_{1}+f_{2})^{t}= f_{1}^{t}+_{c}f_{2}^{t}$, we obtain 
\begin{equation*}
\mathcal V(\pi)^{t}={\bf d}(\pi_{1})+_{c}\cdots+_{c}{\bf d}(\pi_{t})= {\bf d}(\pi).
\end{equation*} 
\end{proof}

\subsection{Degenerate Whittaker models and the two maps}\label{ss:deg_whit}
We begin by studying the $SL(2)$-type of the base change (or the automorphic induction) lift of an irreducible representation. 

\begin{theorem}\label{mainth}
\begin{enumerate}
\item Let $\pi\in \aaa_{F}$ be a rigid representation. Then $\mathcal V(\pi)=\mathcal V({\rm bc}_{E/F}(\pi))$.
\item  Let $\pi\in \aaa_{E}$ be a rigid representation. Then $d\mathcal V(\pi)=\mathcal V({\rm ai}_{E/F}(\pi))$.
\end{enumerate}
\end{theorem}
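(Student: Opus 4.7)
The plan is to unpack the definition $\mathcal V(\tau) = P(\rec(\tau^{t}))$ and combine two ingredients: first, that base change and automorphic induction commute with the Zelevinsky involution on rigid representations (Proposition \ref{mainpr}); second, that the partition $P$ of a Weil--Deligne representation depends only on the Jordan block structure of the nilpotent operator $N$, which behaves transparently under restriction and induction.

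For part (1), let $\pi \in \aaa_{F}$ be rigid. Using Proposition \ref{mainpr}(1) to commute the Zelevinsky involution past ${\rm bc}_{E/F}$, I would compute
\[
\mathcal V({\rm bc}_{E/F}(\pi)) = P_{E}(\rec_{E}({\rm bc}_{E/F}(\pi)^{t})) = P_{E}(\rec_{E}({\rm bc}_{E/F}(\pi^{t}))) = P_{E}({\rm res}_{E/F}(\rec_{F}(\pi^{t}))).
\]
By definition ${\rm res}_{E/F}$ replaces $\rho$ by $\rho|_{W_{E}}$ while leaving the pair $(V,N)$ unchanged, and since $P$ is extracted only from the Jordan block structure of $N$ on $V$, one obtains $P_{E}({\rm res}_{E/F}((\rho,N))) = P_{F}((\rho,N))$. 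This yields $\mathcal V({\rm bc}_{E/F}(\pi)) = P_{F}(\rec_{F}(\pi^{t})) = \mathcal V(\pi)$.

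For part (2), let $\pi \in \aaa_{E}$ be rigid. The same manoeuvre using Proposition \ref{mainpr}(2) gives
\[
\mathcal V({\rm ai}_{E/F}(\pi)) = P_{F}(\rec_{F}({\rm ai}_{E/F}(\pi^{t}))) = P_{F}({\rm ind}_{E/F}(\rec_{E}(\pi^{t}))).
\]
Writing $\rec_{E}(\pi^{t}) = ((\rho,V),N)$, Lemma \ref{patnai} provides a basis in which the induced nilpotent operator is block diagonal with $d$ blocks, each a nonzero scalar multiple of $N$; since rescaling preserves Jordan block sizes, the induced Jordan partition consists of $d$ copies of the partition of $N$. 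Hence $P_{F}({\rm ind}_{E/F}((\rho,N))) = d\, P_{E}((\rho,N))$, which gives $\mathcal V({\rm ai}_{E/F}(\pi)) = d\mathcal V(\pi)$.

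The rigidity hypothesis enters only through Proposition \ref{mainpr}, which bears the substantive content; once that is in hand, the theorem reduces to the observations that restriction preserves $N$ and that induction multiplies its Jordan block multiplicities by $d$ (explicitly recorded in Lemma \ref{patnai}). There is no serious obstacle beyond carefully matching the two definitions of $P$ on $\mathcal G_{E}$ and $\mathcal G_{F}$, which is routine linear algebra.
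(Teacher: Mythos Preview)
Your proof is correct and follows essentially the same approach as the paper: both arguments combine Proposition \ref{mainpr} (commuting the two maps with the Zelevinsky involution on rigid representations) with the observation that the partition $P$ depends only on the nilpotent operator, which is unchanged by ${\rm res}_{E/F}$ and has its multiplicities scaled by $d$ under ${\rm ind}_{E/F}$ (Lemma \ref{patnai}). The paper's proof is simply more terse, recording only the base change case explicitly and leaving the automorphic induction analogue to the reader.
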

\begin{proof}
It is clear from the definition of base change that for $\pi\in \mathcal A_{F}$ that $P_{F}({\rm rec}_{F}(\pi))=P_{E}({\rm rec}_{E}({\rm bc}_{E/F}(\pi)))$. The result now follows from Proposition \ref{mainpr}.
\end{proof}

The hypothesis of rigidity in Theorem \ref{mainth} is essential as demonstrated by the following example. Take $\pi=L([1,\nu_{F}],[\kappa\nu_{F}^{2},\kappa\nu_{F}^{3}])$. Then ${\rm bc}_{E/F}(\pi)=L([1,\nu_{E}],[\nu_{E}^{2},\nu_{E}^{3}])$ and by Lemma \ref{lem_def_part}) we get that $\mathcal V(\pi)\neq \mathcal V({\rm bc}_{E/F}(\pi))$. A similar example can be constructed in the case of automorphic induction.   

However, as earlier, the rigidity hypothesis can be removed if we assume that $\pi$ is unitarizable. 

\begin{theorem}\label{mainth_2}
\begin{enumerate}
\item Let $\pi\in\aaa_{F}^{u}$. Then $\mathcal V(\pi)=\mathcal V({\rm bc}_{E/F}(\pi))$.
\item Let $\pi\in\aaa_{E}^{u}$. Then $d\mathcal V(\pi)=\mathcal V({\rm ai}_{E/F}(\pi))$.
\end{enumerate}
\end{theorem}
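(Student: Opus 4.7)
The plan is to deduce Theorem \ref{mainth_2} from Theorem \ref{mainth} by extending Proposition \ref{mainpr} from rigid to unitarizable representations. The proof of Theorem \ref{mainth} used rigidity only through Proposition \ref{mainpr}; the identity $P_F({\rm rec}_F(\pi)) = P_E({\rm rec}_E({\rm bc}_{E/F}(\pi)))$ holds for any $\pi \in \aaa_F$, being immediate from the definition of base change and the fact that ${\rm res}_{E/F}$ preserves the nilpotent operator $N$. So granting that ${\rm bc}_{E/F}(\pi^t) = {\rm bc}_{E/F}(\pi)^t$ for $\pi \in \aaa_F^u$, the same chain of equalities as in Theorem \ref{mainth} gives
\begin{equation*}
\mathcal V(\pi) = P_F({\rm rec}_F(\pi^t)) = P_E({\rm rec}_E({\rm bc}_{E/F}(\pi^t))) = P_E({\rm rec}_E({\rm bc}_{E/F}(\pi)^t)) = \mathcal V({\rm bc}_{E/F}(\pi)).
\end{equation*}
The automorphic induction analogue is entirely parallel, picking up the factor of $d$ via Lemma \ref{patnai}.

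To prove the extended compatibility, I would invoke Tadi\'c's classification (Theorem \ref{thm:tadic}) to write $\pi = \pi_1 \times \cdots \times \pi_k$ where each $\pi_i$ is either a unitarizable Speh or of the form $\pi(\tau_i, \alpha_i) = \nu^{\alpha_i}\tau_i \times \nu^{-\alpha_i}\tau_i$. Using that the Zelevinsky involution preserves the class of Speh representations (by the ladder-theoretic analysis of \cite{LM} and the algorithm of \cite{MW}) and that $\pi(\tau, \alpha)^t = \pi(\tau^t, \alpha)$, the product $\pi_1^t \times \cdots \times \pi_k^t$ is again of Tadi\'c form, hence unitarizable and irreducible, so $\pi^t = \pi_1^t \times \cdots \times \pi_k^t$; the same reasoning applies to ${\rm bc}_{E/F}(\pi)$, which is unitarizable by Proposition \ref{untri}. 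Combining these with Proposition \ref{untri} and Lemma \ref{bcprod}, it suffices to check ${\rm bc}_{E/F}(\pi_i^t) = {\rm bc}_{E/F}(\pi_i)^t$ separately for each Tadi\'c factor.

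For a Speh factor this is just Proposition \ref{mainpr}(1). For $\pi_i = \pi(\tau_i, \alpha_i)$, the two constituents $\nu^{\pm \alpha_i}\tau_i$ are each rigid, and expanding both ${\rm bc}_{E/F}(\pi_i^t)$ and ${\rm bc}_{E/F}(\pi_i)^t$ via Lemma \ref{bcprod} and the identity $(\nu^\alpha \rho)^t = \nu^\alpha \rho^t$ reduces the matter to applying Proposition \ref{mainpr}(1) to each constituent; both sides collapse to ${\rm bc}_{E/F}(\nu^{\alpha_i}\tau_i^t) \times {\rm bc}_{E/F}(\nu^{-\alpha_i}\tau_i^t)$. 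The irreducibility hypotheses needed for Lemma \ref{bcprod} are met because the two factors are supported on disjoint $E$-cuspidal lines: by Lemma \ref{cuspfac}(3) the cuspidal constituents of the base change of the underlying unitarizable cuspidal of $\tau_i$ are themselves unitarizable, and for $\alpha_i \in (-\tfrac12, \tfrac12) \setminus \{0\}$ two such cuspidals twisted by $\nu_E^{\alpha_i}$ and $\nu_E^{-\alpha_i}$ cannot share a common cuspidal line without forcing a non-trivial unitary relation among unitarizable cuspidals.

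The main obstacle is the combinatorial input that the Zelevinsky involution preserves Tadi\'c form, namely that it carries Speh to Speh and commutes with the construction $\pi(\tau, \alpha)$; once that is in hand through the ladder theory of \cite{LM} and the algorithm of \cite{MW}, the remainder is a direct assembly of the results developed in Sections \ref{defbcai}, \ref{s_lad}, and the earlier parts of Section \ref{relsl}.
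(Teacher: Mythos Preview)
Your proof is correct, and the ingredients you use are exactly those the paper relies on. The organizational structure, however, differs slightly from the paper's.

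The paper's one-line proof applies Theorem~\ref{mainth} directly to each Speh block (and to $\pi(\tau,\alpha)$ via its two rigid constituents), then uses the product decomposition ${\rm bc}_{E/F}(\pi)=\prod_i {\rm bc}_{E/F}(\pi_i)$ from the proof of Proposition~\ref{untri} together with the additivity $\mathcal V(\pi)=\sum_i\mathcal V(\pi_i)$ for an irreducible Tadi\'c product. You instead first extend Proposition~\ref{mainpr} itself---the commutation of ${\rm bc}_{E/F}$ with the Zelevinsky involution---to the unitarizable class, and then rerun the argument of Theorem~\ref{mainth} verbatim.

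These two routes are equivalent: the additivity of $\mathcal V$ over an irreducible Tadi\'c product that the paper implicitly uses is precisely the statement $\pi^t=\pi_1^t\times\cdots\times\pi_k^t$ (together with Lemma~\ref{lem_def_part}), which is the fact you isolate and prove explicitly. Your version has the advantage of making transparent the one nontrivial combinatorial input---that the Zelevinsky involution preserves the Tadi\'c building blocks---which the paper's compressed proof leaves to the reader. The paper's packaging has the advantage of staying at the level of the invariant $\mathcal V$ rather than unwinding to Proposition~\ref{mainpr}, so it reads as a direct reduction to the rigid case.
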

\begin{proof}
The result is obtained by applying Theorem \ref{mainth} to the class of Speh representations and arguing as in the proof of Proposition \ref{untri}.
\end{proof}

Finally we have the following result showcasing the behavior of the two maps with respect to degenerate Whittaker models.
\begin{theorem}\label{deg_whit_func}
\begin{enumerate}
\item Let $\pi\in \aaa_{F}$ be either rigid or unitarizable. Then ${\rm bc}_{E/F}(\pi)$ has a degenerate Whittaker model given by the sequence ${\bf d}(\pi)$. 
\item Let $\pi\in \aaa_{E}$ be either rigid or unitarizable. Then ${\rm ai}_{E/F}(\pi)$ has a degenerate Whittaker model given by the sequence $\underbrace{\operatorname{{\bf d}(\pi)+_{c}\cdots+_{c}{\bf d}(\pi)}}\limits_{d-\text{times}}$.
\end{enumerate}
\end{theorem}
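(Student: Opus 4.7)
The plan is to derive both parts as direct consequences of Zelevinsky's existence theorem for the degenerate Whittaker model attached to the depth sequence (the statement recalled just after \S\ref{sss:def_depth}), combined with Theorems \ref{mainth}, \ref{mainth_2} and the transpose identity of Lemma \ref{sl_depth}. Since every irreducible representation carries a degenerate Whittaker model with respect to its own depth sequence, it is enough to compute ${\bf d}({\rm bc}_{E/F}(\pi))$ and ${\bf d}({\rm ai}_{E/F}(\pi))$ and match them with the compositions in the theorem statement.

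For the base change assertion I would chain
\[
{\bf d}({\rm bc}_{E/F}(\pi))=\mathcal V({\rm bc}_{E/F}(\pi))^{t}=\mathcal V(\pi)^{t}={\bf d}(\pi),
\]
using Lemma \ref{sl_depth} at the outer equalities and Theorem \ref{mainth}(1) (or Theorem \ref{mainth_2}(1) in the unitarizable case) in the middle. Zelevinsky's theorem applied to ${\rm bc}_{E/F}(\pi)$ then yields the required degenerate Whittaker model.

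For the automorphic induction assertion, Theorem \ref{mainth}(2) (or its unitarizable variant) gives $\mathcal V({\rm ai}_{E/F}(\pi))=d\,\mathcal V(\pi)$, where $d\,\mathcal V(\pi)$ denotes the partition obtained from $\mathcal V(\pi)$ by repeating each part $d$ times; this interpretation is forced by the block description of $\tilde N$ in Lemma \ref{patnai}. Composing with Lemma \ref{sl_depth} yields ${\bf d}({\rm ai}_{E/F}(\pi))^{t}=d\,{\bf d}(\pi)^{t}$, and the proof reduces to the purely combinatorial identity
\[
\bigl(d\,\lambda\bigr)^{t}=\underbrace{\lambda^{t}+_{c}\cdots+_{c}\lambda^{t}}_{d-\text{times}},
\]
a short Young-diagram exercise, since `duplicating rows $d$-fold' and `duplicating columns $d$-fold' are conjugate operations, and the latter is precisely the coordinate-wise sum. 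Applying it with $\lambda={\bf d}(\pi)^{t}$ and conjugating gives ${\bf d}({\rm ai}_{E/F}(\pi))=\underbrace{{\bf d}(\pi)+_{c}\cdots+_{c}{\bf d}(\pi)}_{d-\text{times}}$, and Zelevinsky's theorem finishes the argument.

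There is no serious obstacle: the representation-theoretic content has already been packaged into Theorems \ref{mainth} and \ref{mainth_2}, and the rest is bookkeeping. The only point that requires care is reconciling the two meanings of ``$d$ acting on a partition''---repeating each part $d$ times (as in Theorem \ref{mainth}(2)) versus coordinate-wise sum of $d$ copies (as in the present theorem)---which is handled by the transpose identity above.
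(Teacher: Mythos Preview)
Your proposal is correct and follows exactly the approach the paper takes: the paper's proof is the single sentence ``The result is an immediate consequence of Theorem \ref{mainth}, Theorem \ref{mainth_2} and Lemma \ref{sl_depth}.'' You have simply unpacked this, in particular making explicit the combinatorial identity $(d\lambda)^t=\underbrace{\lambda^t+_c\cdots+_c\lambda^t}_{d}$ that the paper leaves implicit in part (2).
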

\begin{proof}
The result is an immediate consequence of Theorem \ref{mainth}, Theorem \ref{mainth_2} and Lemma \ref{sl_depth}.
\end{proof}

\section{Klyachko models}\label{relkts}
We begin this section by recalling the definition of Klyachko models and the classification results for ladder representations with respect to these models that were obtained in \cite{MOS}. We use these results then to show that the two maps preserve the Klyachko type of ladder representations in an appropriate sense. 
\subsection{Definition of Klyachko types}
\subsubsection{}\label{defkt}
For a decomposition $n=2k+r$ let
\begin{equation*}
H_{2k,r}=\{\begin{pmatrix} h & X \\ 0 & u \end{pmatrix}: h\in \Sp_{2k}(F),\,X\in M_{2k\times r}(F),\,u\in U_r\}
\end{equation*}
and $\psi=\psi_{2k,r}$ be defined by
\begin{equation*}
\psi(\begin{pmatrix} h & X \\ 0 & u \end{pmatrix})=\psi_{r}(u).
\end{equation*}
(See \S \ref{sss:gen} for the definition of $U_r$ and its character $\psi_{r}$.)

\begin{defn}
Let $\pi\in\aaa_{F}(n)$. If $\pi$ is $(H_{2k,r},\psi)$-distinguished for some decomposition $n=2k+r$ then we say that it admits a Klyachko model of type $r$. In this case the integer $r$ is referred to as the Klyachko type of $\pi$ and denoted by $r(\pi)$. 
\end{defn}
We remark that $r(\pi)$ is well defined as by \cite[Theorem 1]{OS2} the Klyachko type of an irreducible representation is unique if it exists.  

\subsubsection{}
\begin{remark}
Note that for any $\pi\in\Pi(\GL_{n}(F))$, being $(H_{2k,r},\psi)$-distinguished is independent of the choice of non-trivial character $\psi$ of $F$. Indeed, for any other character $\psi'\ne 1$ there is a diagonal matrix $a\in \GL_{n}(F)$ normalizing $H_{2k,r}$ such that $\psi'_{2k,r}(h)=\psi_{2k,r}(ah a^{-1})$ for all $h\in H_{2k,r}$.
\end{remark}

\subsection{The classification}
We now recall the classification of ladder representations with respect to the Klyachko models.
\subsubsection{Right aligned segments} We define the following relation on segments of cuspidal representations.
\begin{definition}\label{def: ra seg}
For segments $\Delta=[\nu^{a}\sigma,\nu^{b}\sigma]$ and $\Delta'=[\nu^{a'}\sigma,\nu^{b'}\sigma]$ ($a,a',b,b'\in \mathbb Z$) we say that $\Delta'$ is right-aligned with $\Delta$ and write $\Delta' \vdash \Delta$ if 
\begin{itemize}
\item $a\ge a'+1$ and
\item $b= b'+1$.
\end{itemize}
We label this relation by the integer $r=s(a-a'-1)$ where $\sigma\in \aaa_{F}^{\circ}(s)$ and write $\Delta'\vdash_r \Delta$.
\end{definition}
Note, in particular, that $\Delta'\vdash_0\Delta$ means that $\Delta=\nu\Delta'$.

\subsubsection{} We now provide a description of the ladder representations that admit any particular Klyachko model (\cite[Proposition 14.5 and Theorem 14.7]{MOS}).
\begin{theorem}\label{thm: kly lad}
\begin{enumerate}
\item Let $\mult=\{\Delta_1,\dots,\Delta_t\}$ be a proper ladder, so that $L(\mult)\in \mathcal L_{p}\cap \aaa_{F}(n)$ and let $n=2k+r$. If $t$ is odd, let $s$ be such that $L(\Delta_1)\in \aaa_{F}(s)$, otherwise, set $s=0$. Then $L(\mult)$ is $(H_{2k,r},\psi)$-distinguished if and only if $\Delta_{t-2i}\vdash_{r_i}\Delta_{t-2i-1}$ for some $r_i$, $i=0,\dots,\lfloor t/2\rfloor-1$ and $r=r_0+\cdots +r_{\lfloor t/2\rfloor-1}+s$. 
\item Let $\pi$ be a ladder representation and assume that $\pi=\pi_1\times\cdots\times\pi_l$ is the unique decomposition of $\pi$ as a product of proper ladder representations (see Proposition \ref{prop: prop}). Then $\pi$ admits a Klyachko model if and only if $\pi_i$ admits a Klyachko model for all $i=1,\dots,l$. Furthermore, in that case $r(\pi)=r(\pi_1)+\cdots+r(\pi_l)$.
\end{enumerate}
\end{theorem}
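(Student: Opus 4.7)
\medskip

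\noindent\textbf{Proof proposal.} The two parts separate naturally: part (2) is a multiplicativity/disjoint-support statement, while part (1) is a structural classification that sits at the heart of the argument.

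For part (2), the plan is to exploit the fact (Proposition \ref{prop: prop}) that the factors $\pi_1, \dots, \pi_l$ in the proper-ladder decomposition of $\pi$ are supported on \emph{pairwise disjoint} cuspidal lines. Combined with the explicit description of the Jacquet modules of ladders (cf.\ \cite{KL12}), this disjointness should force the Bernstein--Zelevinsky filtration of $\pi|_{H_{2k,r}}$ to split along the partition $2k+r = \sum(2k_i+r_i)$. Concretely, applying the Mackey-type geometric lemma for the parabolic subgroup corresponding to the product $\pi_1 \times \cdots \times \pi_l$, together with the restriction from $\GL_n(F)$ to $H_{2k,r}$, one decomposes
\[
\Hom_{H_{2k,r}}(\pi_1\times\cdots\times\pi_l,\psi_{2k,r}) = \bigoplus \prod_{i=1}^l \Hom_{H_{2k_i,r_i}}(\pi_i,\psi_{2k_i,r_i}),
\]
where the sum runs over admissible partitions $2k+r=\sum(2k_i+r_i)$. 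Disjoint cuspidal support forces the sum to collapse to a single summand (any cross term vanishes on cuspidal-line grounds). Combined with the uniqueness of Klyachko type \cite[Theorem 1]{OS2}, the additivity $r(\pi)=r(\pi_1)+\cdots+r(\pi_l)$ follows immediately from a single non-vanishing summand, and the ``only if'' direction is automatic from the same decomposition.

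For part (1), I would argue inductively on the length $t$ of the proper ladder $\mult=\{\Delta_1,\dots,\Delta_t\}$. The base cases $t=1$ (an essentially square-integrable segment, where the Klyachko type is forced by the length of the segment) and $t=2$ (which should already encode the right-alignment condition $\Delta_2\vdash_{r_0}\Delta_1$) provide the anchor. For the induction step, I would use the explicit Jacquet module formula for ladders from \cite[Corollary 2.2]{KL12} to split off the top two segments $\Delta_{t-1},\Delta_t$ via a maximal parabolic and reduce the distinction problem to a shorter proper ladder. The key combinatorial identity would be that the geometric-lemma orbit contributing nontrivially to the $H_{2k,r}$-equivariant Hom is exactly the one where $\Delta_t \vdash \Delta_{t-1}$ with the correct shift index $r_{\lfloor t/2\rfloor-1}$. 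For even $t$ the induction terminates with $s=0$; for odd $t$ the remaining unpaired segment $\Delta_1$ contributes the leftover $s=\dim L(\Delta_1)$ as a pure Whittaker factor.

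The main obstacle is the induction step of part (1): showing that \emph{only} the right-alignment orbit survives in the filtration and that no other geometric-lemma stratum can contribute a nontrivial $H_{2k,r}$-invariant functional. This will require carefully matching the linkage structure of segments (the proper-ladder hypothesis $a_i \leq b_{i+1}+1$ is used here) against the structure of $(H_{2k,r}, \psi)$-orbits on the flag variety, and using Frobenius reciprocity together with the uniqueness of Klyachko models to rule out spurious contributions. Once the combinatorics of surviving orbits is identified with the sequence of right-alignment shifts $r_0,\dots,r_{\lfloor t/2\rfloor-1}$, the formula $r=r_0+\cdots+r_{\lfloor t/2\rfloor -1}+s$ drops out by tracking the character $\psi_r$ through each reduction step.
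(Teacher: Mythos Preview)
The paper does not prove this theorem at all: it is quoted verbatim from \cite[Proposition 14.5 and Theorem 14.7]{MOS} as background input, so there is no ``paper's own proof'' to compare your proposal against. The actual proof lives in \cite{MOS} and is considerably more involved than what you sketch.

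More importantly, your plan for part (2) rests on a false premise. You write that the proper-ladder factors $\pi_1,\dots,\pi_l$ are supported on \emph{pairwise disjoint} cuspidal lines, and you use this to force the geometric-lemma filtration to collapse. But a ladder representation is by definition rigid: $\supp(\pi)\subset\sigma^{\Z}$ for a single $\sigma\in\aaa_F^{\circ}$, and the decomposition of Proposition \ref{prop: prop} comes from the gaps $a_i>b_{i+1}+1$ in that one cuspidal line. All the $\pi_i$ live on the \emph{same} line $\sigma^{\Z}$, so the disjoint-support argument you invoke is simply unavailable. Without it, the Mackey filtration does not split as you claim, cross terms between the $\pi_i$ are genuinely present, and the clean product formula for the Hom-space does not follow. (Relatedly, even with disjoint support the geometric lemma only gives a filtration, not a direct sum, so the displayed equality would need further justification.)

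For part (1), your inductive outline is reasonable in spirit, but the ``main obstacle'' you flag---ruling out all but the right-alignment orbit---is essentially the entire content of the theorem and is where the real work in \cite{MOS} is done; your sketch does not indicate how you would carry this out.
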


\subsection{Relationship with the two maps}

We now prove Theorem \ref{Klyachkoai_3} in the introduction. The analogous result for unitarizable representations and for the case of the base change map (Corollary \ref{Klyachkoai_uni} (1) below) was obtained in \cite[Corollary 6.1]{OS}. This was done there by observing that $r(\pi)=\sum_{i=0}^{\infty}\mathcal V(\pi)(2i+1)$ (using Lemma \ref{lem_def_part}) and then using the fact that $SL(2)$-type of a unitarizable representation is preserved by base change. Unlike the unitarizable representations though, a ladder representation may not have a Klyachko model and it is not a priori clear if for a representation $\pi$ admitting a Klyachko model, even ${\rm bc}_{E/F}(\pi)$ will admit one. However Theorem \ref{thm: kly lad} allows us to determine precisely which ladders have a Klyachko model and enables us to prove  Theorem \ref{Klyachkoai_3} and Corollary \ref{Klyachkoai_uni} directly, without resorting to $SL(2)$-types, as we see below.
   
\subsubsection{}
\begin{theorem}\label{Klyachkoai}
\begin{enumerate}
\item Let $\pi\in \mathcal L_{F}$. Then $\pi$ admits a Klyachko model if and only if ${\rm bc}_{E/F}(\pi)$ admits one. Moreover $r({\rm bc}_{E/F}(\pi))=r(\pi)$. 
\item Let $\pi\in \mathcal L_{E}$. Then $\pi$ admits a Klyachko model if and only if ${\rm ai}_{E/F}(\pi)$ admits one. Moreover $ r({\rm ai}_{E/F}(\pi))=dr(\pi)$. 
\end{enumerate}
\end{theorem}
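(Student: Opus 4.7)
The plan is to reduce to proper ladder factors via Proposition \ref{prop: prop}, use Lemma \ref{strucpres} to describe ${\rm bc}_{E/F}(\pi)$ (respectively ${\rm ai}_{E/F}(\pi)$) as a product of proper ladders sharing the same integer segment data as the factors of $\pi$ but supported on pairwise disjoint cuspidal lines, and then apply Theorem \ref{thm: kly lad} to track both the existence of Klyachko models and their types.

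For (1), write $\pi = \pi_1 \times \cdots \times \pi_l$ as the unique decomposition into proper ladders on the single cuspidal line $\sigma^\Z$ with $\sigma \in \aaa_F^\circ(s_\sigma)$ (Proposition \ref{prop: prop}). With ${\rm bc}_{E/F}(\sigma) = \sigma_1 \times \cdots \times \sigma_t$ and $\sigma_i \in \aaa_E^\circ(s_\sigma/t)$ (Lemma \ref{cuspfac}(1)), Lemma \ref{strucpres}(1) together with Lemma \ref{bcprod}(1) gives
\[
{\rm bc}_{E/F}(\pi) = \prod_{i=1}^{t}\prod_{j=1}^{l}(\pi_j)_{(\sigma_i)},
\]
an irreducible product of proper ladders, each $(\pi_j)_{(\sigma_i)}$ carrying the same integer segment data as $\pi_j$. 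Since the right-alignment relation $\vdash$ of Definition \ref{def: ra seg} is a condition purely on integer endpoints, a proper ladder admits a Klyachko model if and only if its ``shape-twin'' on any other cuspidal line does; combined with Theorem \ref{thm: kly lad}(2), this yields the first assertion of (1).

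For the equality of types, Theorem \ref{thm: kly lad}(1) expresses $r(\pi_j)$ as a sum of labels $r_i = s_\sigma(a - a' - 1)$ plus, when $t$ is odd, a boundary term equal to $\ell(\Delta_1) \cdot s_\sigma$. Thus $r(\pi_j)$ is linear in the cuspidal degree; passing from $\sigma$ to $\sigma_i$ rescales this by $1/t$, so $r((\pi_j)_{(\sigma_i)}) = r(\pi_j)/t$. Summing the $l \cdot t$ contributions then yields $r({\rm bc}_{E/F}(\pi)) = r(\pi)$. Part (2) proceeds identically; now $\sigma \in \aaa_E^\circ(k)$ and $\sigma_i \in \aaa_F^\circ(kd/t)$, so the rescaling factor is $d/t$, and summing over the $t$ new factors produces a global factor $d$, giving $r({\rm ai}_{E/F}(\pi)) = d \cdot r(\pi)$.

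The principal (albeit mild) obstacle is verifying carefully the linearity of the Theorem \ref{thm: kly lad}(1) formula in the cuspidal degree, including the odd-$t$ boundary term. A secondary bookkeeping matter is to extend additivity of Klyachko type from a single ladder (Theorem \ref{thm: kly lad}(2)) to products of ladders on pairwise disjoint cuspidal lines, needed to combine the contributions across different $i$; this should follow either from the same $(H_{2k,r},\psi)$-distinction analysis used in \cite{MOS} or from a direct extension of Theorem \ref{thm: kly lad}(2) to $\mathcal L_{{\rm ind}}$. Beyond these two resolvable points, the argument is pure combinatorial bookkeeping.
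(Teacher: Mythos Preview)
Your approach is essentially the paper's, with one unnecessary extra layer. The paper does not first decompose $\pi$ into proper ladders; it applies Lemma \ref{strucpres} directly to the ladder $\pi$ to obtain ${\rm bc}_{E/F}(\pi)=\pi_{(\sigma_1)}\times\cdots\times\pi_{(\sigma_t)}$, then invokes Theorem \ref{thm: kly lad} to see that each $\pi_{(\sigma_i)}$ admits a Klyachko model with $r(\pi_{(\sigma_i)})=r(\pi)/t$ (the shape-invariance and linearity-in-cuspidal-degree you describe). Your intermediate use of Proposition \ref{prop: prop} and Lemma \ref{bcprod} is harmless but superfluous, and it obligates you to check irreducibility of $\prod_j {\rm bc}_{E/F}(\pi_j)$, which the paper sidesteps entirely.

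The two ``obstacles'' you flag are exactly the points where the paper cites \cite{MOS}: the additivity/hereditary property across an irreducible product is \cite[Proposition 13.3]{MOS} (giving the ``only if'' direction and the formula $r({\rm bc}_{E/F}(\pi))=\sum_i r(\pi_{(\sigma_i)})=r(\pi)$), while the converse over pairwise disjoint cuspidal lines is \cite[Proposition 13.4]{MOS} (from ${\rm bc}_{E/F}(\pi)$ admitting a model, deduce each $\pi_{(\sigma_i)}$ does, hence $\pi$ does). So nothing new is needed beyond these citations; your ``principal obstacle'' about linearity in the cuspidal degree, including the odd boundary term $\ell(\Delta_1)\cdot s_\sigma$, is immediate from the explicit formula in Theorem \ref{thm: kly lad}(1).
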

\begin{proof}
Let $\sigma\in \aaa_{F}^{\circ}(m)$ be such that $\supp(\pi)\in \sigma^{\mathbb Z}$. Using Lemma \ref{cuspfac}, we write ${\rm bc}_{E/F}(\sigma)=\sigma_{1}\times \cdots \times \sigma_{t}$ where $\sigma_{i}\in \aaa_{E}^{\circ}(\frac{m}{t})$. By Lemma \ref{strucpres}, we get that ${\rm bc}_{E/F}(\pi)=\pi_{(\sigma_{1})}\times \cdots \times \pi_{(\sigma_{t})}$ (see \S \ref{def: cusp supp} for the notation). 

If $\pi$ has a Klyachko model of type $r(\pi)$, then by Theorem \ref{thm: kly lad} each $\pi_{(\sigma_{i})}$ has a Klyachko model and $r(\pi_{(\sigma_{i})})=\frac{r(\pi)}{t}$. By the hereditary property of Klyachko model (\cite[Proposition 13.3]{MOS}), ${\rm bc}_{E/F}(\pi)$ has the Klyachko model of type $r(\pi)$. This gives us the `only if' part of (1).

For the `if' part, note that the cuspidal lines of $\sigma_{i}$ and $\sigma_{j}$ are pairwise disjoint if $i\neq j$ and thus, by \cite[Proposition 13.4]{MOS}, each $\pi_{(\sigma_{i})}$ admits a Klyachko model. Appealing to Theorem \ref{thm: kly lad} again, we get that $\pi$ admits a Klyachko model.
\end{proof}

\subsubsection{}
\begin{corr}\label{Klyachkoai_cor}
\begin{enumerate}
\item Let $\pi_{i}\in \mathcal L_{F}$ ($i=1,\dots,k$) be such that $\pi:=\pi_{1}\times \cdots \times \pi_{k}\in \mathcal L_{{\rm ind}}$. Moreover assume that each $\pi_{i}$ admits a Klyachko model and that $\pi$ is rigid. Then ${\rm bc}_{E/F}(\pi)$ admits a Klyachko model with $r({\rm bc}_{E/F}(\pi))=\sum_{i=1}^{k}r(\pi_{i})$.
\item  Let $\pi_{i}\in \mathcal L_{E}$ ($i=1,\dots,k$) be such that $\pi:=\pi_{1}\times \cdots \times \pi_{k}\in \mathcal L_{{\rm ind}}$. Moreover assume that each $\pi_{i}$ admits a Klyachko model and that $\pi$ is rigid. Then ${\rm ai}_{E/F}(\pi)$ admits a Klyachko model with $r({\rm ai}_{E/F}(\pi))=d(\sum_{i=1}^{k}r(\pi_{i}))$.
\end{enumerate}
\end{corr}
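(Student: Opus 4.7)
The plan is to combine Theorem \ref{Klyachkoai} applied factor-by-factor with the hereditary property of Klyachko models \cite[Proposition 13.3]{MOS}, using Lemma \ref{bcprod} to pass between ${\rm bc}_{E/F}(\pi)$ and the product of the base changes of the $\pi_i$. For part (1), Theorem \ref{Klyachkoai}(1) applied to each ladder $\pi_i$ yields that each ${\rm bc}_{E/F}(\pi_i)$ admits a Klyachko model, with $r({\rm bc}_{E/F}(\pi_i)) = r(\pi_i)$. The crux then reduces to showing that the product ${\rm bc}_{E/F}(\pi_1) \times \cdots \times {\rm bc}_{E/F}(\pi_k)$ is irreducible, so that Lemma \ref{bcprod}(1) identifies it with ${\rm bc}_{E/F}(\pi)$.

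For this irreducibility step I would exploit the rigidity of $\pi$: all the $\pi_i$ are supported on a common cuspidal line $\sigma^{\mathbb Z}$. Writing ${\rm bc}_{E/F}(\sigma) = \sigma_1 \times \cdots \times \sigma_t$ as in Lemma \ref{cuspfac}(1), Lemma \ref{strucpres}(1) applied to each $\pi_i$ gives
\[
{\rm bc}_{E/F}(\pi_i) = (\pi_i)_{(\sigma_1)} \times \cdots \times (\pi_i)_{(\sigma_t)}.
\]
Since the $\sigma_j$'s lie on pairwise disjoint cuspidal lines, the factors across different values of $j$ commute freely inside any irreducible induced representation, so after rearranging one has
\[
\prod_{i=1}^k {\rm bc}_{E/F}(\pi_i) = \prod_{j=1}^t \Big( \prod_{i=1}^k (\pi_i)_{(\sigma_j)} \Big).
\]
The reducibility of a product of representations supported on a fixed cuspidal line is dictated purely by the combinatorial shape of the Langlands data (the ``oblivious to cuspidal line'' phenomenon that also underlies Lemma \ref{strucpres}). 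Hence, since $\pi = \prod_i \pi_i$ is irreducible on $\sigma^{\mathbb Z}$, each inner product $\prod_i (\pi_i)_{(\sigma_j)}$ is irreducible on $\sigma_j^{\mathbb Z}$, and the full product is therefore irreducible.

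Once ${\rm bc}_{E/F}(\pi) = \prod_i {\rm bc}_{E/F}(\pi_i)$ is established, the hereditary property of Klyachko models \cite[Proposition 13.3]{MOS} immediately produces a Klyachko model on ${\rm bc}_{E/F}(\pi)$ of type $\sum_i r({\rm bc}_{E/F}(\pi_i)) = \sum_i r(\pi_i)$, proving (1). Part (2) is handled by the identical template with Theorem \ref{Klyachkoai}(2), Lemma \ref{bcprod}(2), and Proposition \ref{unlad}(2) replacing their (1) counterparts; the extra factor of $d$ in $r({\rm ai}_{E/F}(\pi)) = d\sum_i r(\pi_i)$ stems directly from the identity $r({\rm ai}_{E/F}(\pi_i)) = d\cdot r(\pi_i)$ furnished by Theorem \ref{Klyachkoai}(2).

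The main obstacle is precisely the irreducibility verification in the second paragraph: one must justify rigorously both the rearrangement of factors over disjoint cuspidal lines in an irreducible induced representation, and the invariance of the reducibility locus under the substitution $\sigma \leftrightarrow \sigma_j$. Both are standard consequences of Bernstein--Zelevinsky theory and are implicit in the proof of Lemma \ref{strucpres}, but making the bookkeeping precise is the one step that does not follow purely formally from the cited results.
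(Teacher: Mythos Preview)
Your proposal is correct and follows essentially the same route as the paper. The paper's proof likewise reduces to showing ${\rm bc}_{E/F}(\pi)=\prod_i{\rm bc}_{E/F}(\pi_i)$ via the decomposition over the cuspidal lines $\sigma_j^{\mathbb Z}$, and then applies Theorem~\ref{Klyachkoai} together with the hereditary property \cite[Proposition 13.3]{MOS}; the irreducibility of each $\prod_i(\pi_i)_{(\sigma_j)}$ that you flag as the one nontrivial step is handled in the paper by a direct citation of \cite[Lemma 5.17, Proposition 5.20, Lemma 5.21]{LM1}, which is precisely the ``reducibility is independent of the cuspidal line'' statement you invoke informally.
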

\begin{proof}
By \cite[Lemma 5.17, Proposition 5.20, Lemma 5.21]{LM1}, for any $\sigma'\in \aaa_{E}^{\circ}$, the representation $(\pi_{1})_{(\sigma')}\times \cdots \times (\pi_{k})_{(\sigma')}$ is irreducible (see \S \ref{def: cusp supp} for the notation) and is thus equal to $\pi_{(\sigma')}$. Therefore, by Lemma \ref{cuspfac} and Lemma \ref{strucpres}, we have 
\begin{equation*}
{\rm bc}_{E/F}(\pi)={\rm bc}_{E/F}(\pi_{1})\times \cdots\times{\rm bc}_{E/F}(\pi_{k}).
\end{equation*}
Theorem \ref{Klyachkoai} along with the hereditary property of Klyachko models (\cite[Proposition 13.3]{MOS}) now gives us the corollary.
\end{proof}

\subsubsection{Symplectic models}
Let $\pi\in \aaa_{F}(2n)$. Then $\pi$ is said to have a symplectic model if it admits a Klyachko model of type 0. In other words, the representation $\pi$ has to be $\Sp_{2n}(F)$-distinguished.  

Recall that any $\pi \in \mathcal L_{{\rm ind}}$ can be written as $\pi=\pi_{1}\times \cdots \times \pi_{k}$ where $\pi_{i}\in \mathcal L_{p}$ ($i=1,\dots,k$) and the multi-set consisting of the representations $\pi_{i}$ is uniquely determined by $\pi$ (see Proposition \ref{prop: prop}). We consider the following property for representations in $\mathcal L_{{\rm ind}}$.
\begin{hypothesis}\label{symp_claim} 
Let $\pi \in \mathcal L_{{\rm ind}}$ and $\pi_{i}\in \mathcal L_{p}$ ($i=1,\dots,k$) be such that $\pi=\pi_{1}\times \cdots \times \pi_{k}$. Then $r(\pi)=0$ implies $r(\pi_{i})=0$ for every $i$. 
\end{hypothesis}
Hypothesis \ref{symp_claim} was proved in \cite{MOS} for all representations in $\mathcal L_{{\rm ind}}$ that satisfy a combinatorial condition (\cite[Proposition 12.5]{MOS}) on the underlying multi-set. 

Assuming that every representation in $\mathcal L_{{\rm ind}}$ satisfies Hypothesis \ref{symp_claim}, we can improve Corollary \ref{Klyachkoai_cor} for the special case of symplectic  models in the following way.
\begin{corr}\label{symp_cor} 
Suppose that Hypothesis \ref {symp_claim} hold for every representation in $\mathcal L_{{\rm ind},F}$ and $\mathcal L_{{\rm ind},E}$. 
\begin{enumerate}
\item Let $\pi\in \mathcal L_{{\rm ind},F}$ and $\pi_{i}\in \mathcal L_{p,F}$ ($i=1,\dots,k$) be such that $\pi=\pi_{1}\times \cdots \times \pi_{k}$. Suppose further that $\pi$ is rigid. Then $\pi$ has a symplectic model if and only if ${\rm bc}_{E/F}(\pi)$ has a symplectic model.
\item Let $\pi\in \mathcal L_{{\rm ind},E}$ and $\pi_{i}\in \mathcal L_{p,E}$ ($i=1,\dots,k$) be such that $\pi=\pi_{1}\times \cdots \times \pi_{k}$. Suppose further that $\pi$ is rigid. Then $\pi$ has a symplectic model if and only if ${\rm ai}_{E/F}(\pi)$ has a symplectic model.
\end{enumerate}
\end{corr}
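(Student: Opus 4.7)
The plan is to prove (1); assertion (2) is entirely analogous, with Theorem \ref{Klyachkoai}(2) and Corollary \ref{Klyachkoai_cor}(2) replacing their base-change counterparts. The approach is to reduce both directions to the already-established Theorem \ref{Klyachkoai} via the hereditary property of Klyachko models and the uniqueness of the proper-ladder decomposition in Proposition \ref{prop: prop}, using Hypothesis \ref{symp_claim} as the single new input on each side.

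For the forward implication, I assume $\pi$ admits a symplectic model, i.e.\ $r(\pi)=0$. Since $\pi\in\mathcal L_{{\rm ind},F}$ with proper-ladder decomposition $\pi=\pi_1\times\cdots\times\pi_k$, Hypothesis \ref{symp_claim} forces $r(\pi_i)=0$ for every $i$; in particular each $\pi_i$ admits a Klyachko model. The hypotheses of Corollary \ref{Klyachkoai_cor}(1) are then met, and that corollary gives $r({\rm bc}_{E/F}(\pi))=\sum_i r(\pi_i)=0$, so ${\rm bc}_{E/F}(\pi)$ has a symplectic model.

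For the converse, I would first write ${\rm bc}_{E/F}(\pi)$ in its proper-ladder form. Choose $\sigma\in\aaa_F^\circ$ with $\supp(\pi)\subset\sigma^{\mathbb Z}$ and use Lemma \ref{cuspfac}(1) to write ${\rm bc}_{E/F}(\sigma)=\sigma_1\times\cdots\times\sigma_t$. By Lemma \ref{strucpres}(1), ${\rm bc}_{E/F}(\pi_j)=(\pi_j)_{(\sigma_1)}\times\cdots\times(\pi_j)_{(\sigma_t)}$, and each factor $(\pi_j)_{(\sigma_i)}$ is a proper ladder, since transporting a multi-set of segments to a different cuspidal line preserves the combinatorial conditions defining a proper ladder. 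By Proposition \ref{unlad}(1) we have ${\rm bc}_{E/F}(\pi)\in\mathcal L_{{\rm ind},E}$, and combining the identities above gives ${\rm bc}_{E/F}(\pi)=\prod_{i,j}(\pi_j)_{(\sigma_i)}$; by the uniqueness clause of Proposition \ref{prop: prop} this is \emph{the} decomposition of ${\rm bc}_{E/F}(\pi)$ into proper ladders.

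Now Hypothesis \ref{symp_claim} applied in $\mathcal L_{{\rm ind},E}$ to the symplectic-distinguished representation ${\rm bc}_{E/F}(\pi)$ yields $r((\pi_j)_{(\sigma_i)})=0$ for all $i,j$. Fixing $j$, the factors of ${\rm bc}_{E/F}(\pi_j)=\prod_i(\pi_j)_{(\sigma_i)}$ are supported on the pairwise disjoint cuspidal lines $\sigma_i^{\mathbb Z}$, so the hereditary property of Klyachko models (as used in the proof of Theorem \ref{Klyachkoai}) gives $r({\rm bc}_{E/F}(\pi_j))=\sum_i r((\pi_j)_{(\sigma_i)})=0$. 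Theorem \ref{Klyachkoai}(1) then returns $r(\pi_j)=0$ for every $j$, and one more application of the hereditary property to the irreducible product $\pi=\pi_1\times\cdots\times\pi_k$ yields $r(\pi)=\sum_j r(\pi_j)=0$. The one delicate bookkeeping point—and the only real obstacle—is verifying that $\prod_{i,j}(\pi_j)_{(\sigma_i)}$ is indeed the unique proper-ladder decomposition of ${\rm bc}_{E/F}(\pi)$ so that Hypothesis \ref{symp_claim} may legitimately be invoked in $\mathcal L_{{\rm ind},E}$; once this is in place, everything else is a routine chain of results already established in the paper.
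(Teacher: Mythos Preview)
Your proof is correct and follows essentially the same route as the paper: the forward direction is identical (Hypothesis \ref{symp_claim} plus Corollary \ref{Klyachkoai_cor}), and for the converse both arguments pin down that each proper-ladder factor $(\pi_j)_{(\sigma_i)}$ of ${\rm bc}_{E/F}(\pi)$ is symplectic via Hypothesis \ref{symp_claim} over $E$, then transfer back to the $\pi_j$. The paper differs only cosmetically, first splitting ${\rm bc}_{E/F}(\pi)$ along cuspidal lines via \cite[Corollary 5.3 and Lemma 5.9]{MOS} before invoking the Hypothesis and then appealing directly to Theorem \ref{thm: kly lad} rather than routing through Theorem \ref{Klyachkoai}(1); your ``delicate bookkeeping point'' about the proper-ladder decomposition of ${\rm bc}_{E/F}(\pi)$ is handled exactly as in the proof of Corollary \ref{Klyachkoai_cor} (the \cite{LM1} irreducibility results plus Lemma \ref{strucpres}).
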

\begin{proof}
The `only if' part follows directly from the assumption that $\pi$ satisfies Hypothesis \ref{symp_claim} and Corollary \ref{Klyachkoai_cor}. The `if' part follows from Lemma \ref{cuspfac}, Lemma \ref{strucpres}, \cite[Corollary 5.3 and Lemma 5.9]{MOS}, Theorem \ref{thm: kly lad} and our assumption that the Hypothesis \ref {symp_claim} hold for every representation in $\mathcal L_{{\rm ind}}$. 
\end{proof}

\subsubsection{} The hypothesis of rigidity is essential for both Corollary \ref{Klyachkoai_cor} and Corollary \ref{symp_cor} to hold. For example let $\pi_{1}=\kappa$, $\pi_{2}=L([\nu_{F},\nu_{F}^{2}],[\nu_{F}^{2},\nu_{F}^{3}])$ and $\pi=\pi_{1}\times \pi_{2}$. By Theorem \ref{thm: kly lad} and the hereditary property of Klyachko models (\cite[Proposition 13.3]{MOS}), the irreducible representation $\pi$ has a Klyachko model. On the other hand, Theorem \ref{thm: kly lad} implies that ${\rm bc}_{E/F}(\pi)$ doesn't have any Klyachko model. Similar counterexamples can be constructed for the case of automorphic induction (and to Corollary \ref{symp_cor}) if $\pi$ is not assumed to be rigid.

\subsubsection{Unitarizable case}However the hypothesis of rigidity can be removed for the case of unitarizable representations. Let $\pi$ be unitarizable. Write $\pi=\pi_{1}\times \cdots\times \pi_{k}$ such that each $\pi_{i}$ is either a unitarizable Speh representation or a representation of the form $\nu^{\alpha}\tau\times \nu^{-\alpha}\tau$, where $\alpha\in (-\frac{1}{2},\frac{1}{2})$ and $\tau$ is a unitarizable Speh representation. Theorem \ref{thm: kly lad} gives that each Speh representation admits a Klyachko model and thus by \cite[Proposition 13.3]{MOS}, so does $\pi$ with its Klyachko type equal to $\sum_{i=1}^{k}r(\pi_{i})$. Thus by Proposition \ref{untri} if $\pi\in \aaa_{F}^{u}$ (resp., $\pi\in \aaa_{E}^{u}$) then both $\pi$ and ${\rm bc}_{E/F}(\pi)$ (resp., ${\rm ai}_{E/F}(\pi)$) have a Klyachko model. We further have the following:
\begin{corr}\label{Klyachkoai_uni}
\begin{enumerate}
\item Let $\pi\in \aaa_{F}^{u}$. Then $r({\rm bc}_{E/F}(\pi))=r(\pi)$. 
\item Let $\pi\in \aaa_{E}^{u}$. Then $r({\rm ai}_{E/F}(\pi))=dr(\pi)$. 
\end{enumerate}
\end{corr}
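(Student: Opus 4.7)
My plan is to leverage Tadi{\'c}'s classification of the unitarizable dual to reduce the statement to the ladder case, for which Theorem \ref{Klyachkoai} already applies, and then glue the ladder factors back together via the hereditary property of Klyachko models.

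First I would invoke Theorem \ref{thm:tadic} to write $\pi=\pi_1 \times \cdots \times \pi_k$, where each $\pi_i$ is either a unitarizable Speh representation $\tau_i$ or a representation of the form $\nu^{\alpha_i}\tau_i \times \nu^{-\alpha_i}\tau_i$ with $\tau_i$ a unitarizable Speh and $\alpha_i\in(-1/2,1/2)$. Each $\pi_i$ is unitarizable and, by Proposition \ref{untri}(1), so is ${\rm bc}_{E/F}(\pi_i)$; since a product of unitarizable representations is irreducible, Lemma \ref{bcprod}(1) gives
\[
{\rm bc}_{E/F}(\pi)={\rm bc}_{E/F}(\pi_1)\times \cdots \times {\rm bc}_{E/F}(\pi_k).
\]
Appealing to the hereditary property of Klyachko models (\cite[Proposition 13.3]{MOS}) on both sides then reduces everything to checking $r({\rm bc}_{E/F}(\pi_i))=r(\pi_i)$ factor by factor.

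For a unitarizable Speh factor $\pi_i=\tau_i$, the equality is immediate from Theorem \ref{Klyachkoai}(1), since $\tau_i$ is a ladder. For a factor $\pi_i=\nu^{\alpha_i}\tau_i \times \nu^{-\alpha_i}\tau_i$, I would decompose one step further: each of $\nu^{\pm\alpha_i}\tau_i$ is a (possibly translated) Speh, hence a ladder, and by Theorem \ref{thm: kly lad} admits a Klyachko model with $r(\nu^{\alpha_i}\tau_i)=r(\nu^{-\alpha_i}\tau_i)=r(\tau_i)$. Another application of Lemma \ref{bcprod}(1) (exactly as in the proof of Proposition \ref{untri}) yields ${\rm bc}_{E/F}(\pi_i)=\nu^{\alpha_i}{\rm bc}_{E/F}(\tau_i)\times \nu^{-\alpha_i}{\rm bc}_{E/F}(\tau_i)$, and Theorem \ref{Klyachkoai}(1) applied to each ladder $\nu^{\pm\alpha_i}\tau_i$ together with the hereditary property gives $r({\rm bc}_{E/F}(\pi_i))=2r(\tau_i)=r(\pi_i)$. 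Summing over $i$ proves part (1). Part (2) will follow verbatim, replacing Proposition \ref{untri}(1), Lemma \ref{bcprod}(1), and Theorem \ref{Klyachkoai}(1) by their automorphic induction counterparts; the only arithmetic change is that Theorem \ref{Klyachkoai}(2) contributes an extra factor of $d$ on each ladder factor, so that $r({\rm ai}_{E/F}(\pi_i))=dr(\pi_i)$, and hence $r({\rm ai}_{E/F}(\pi))=d\,r(\pi)$ after summation.

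The main obstacle, and the step requiring the most care, is the bookkeeping between the two levels of decomposition: the coarse Tadi{\'c} decomposition is needed because its pieces are unitarizable (ensuring the irreducibility hypothesis of Lemma \ref{bcprod}), while the finer decomposition into possibly translated Spehs is what places us in the setting of Theorem \ref{Klyachkoai}, which handles only ladders. The bridge between the two levels is the hereditary property of Klyachko models, and this is why the observation that each translated Speh $\nu^{\pm\alpha_i}\tau_i$ does admit a Klyachko model of the same type as the underlying unitarizable Speh (by Theorem \ref{thm: kly lad}, whose combinatorial hypotheses are insensitive to the translation parameter $\alpha_i$) is essential.
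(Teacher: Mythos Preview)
Your proposal is correct and follows essentially the same route as the paper's own proof: Tadi{\'c}'s decomposition, Lemma \ref{bcprod} via irreducibility of products of unitarizables (as in Proposition \ref{untri}), the hereditary property of Klyachko models on both sides, and Theorem \ref{Klyachkoai} applied to the (translated) Speh factors. The paper proceeds identically, including the further splitting of the $\pi(\tau,\alpha)$ factors into $\nu^{\pm\alpha}\tau$ and the observation that the Klyachko type of a Speh is unaffected by the $\nu^{\pm\alpha}$ twist.
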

\begin{proof}
Let $\pi\in \aaa^{u}_{F}$ and $\pi=\pi_{1}\times\dots\times\pi_{k}$ as above. Arguing as in the proof of Proposition \ref{untri}, we get that ${\rm bc}_{E/F}(\pi)={\rm bc}_{E/F}(\pi_{1})\times \cdots \times {\rm bc}_{E/F}(\pi_{k})$ where each ${\rm bc}_{E/F}(\pi_{i})\in \aaa_{E}^{u}$ and admits a Klyachko model. If $\pi_{i}$ is a unitarizable Speh representation, then by Theorem \ref{Klyachkoai} $r(\pi_{i})=r({\rm bc}_{E/F}(\pi_{i}))$. Suppose now that $\pi_{i}=\nu^{\alpha}\tau\times \nu^{-\alpha}\tau$ for $\alpha\in (-\frac{1}{2},\frac{1}{2})$ and a unitarizable Speh representation $\tau$. As in the proof of Proposition \ref{untri}, we have ${\rm bc}_{E/F}(\pi_{i})={\rm bc}_{E/F}(\nu^{-\alpha}\tau)\times {\rm bc}_{E/F}(\nu^{-\alpha}\tau)$. By Theorem \ref{Klyachkoai}, we have $r({\rm bc}_{E/F}(\nu^{-\alpha}\tau))=r(\nu^{-\alpha}\tau)$ and $r({\rm bc}_{E/F}(\nu^{\alpha}\tau))=r(\nu^{\alpha}\tau)$. Therefore even in this case
\begin{equation*}
r({\rm bc}_{E/F}(\pi_{i}))=r({\rm bc}_{E/F}(\nu^{-\alpha}\tau))+r( {\rm bc}_{E/F}(\nu^{-\alpha}\tau))=r(\nu^{-\alpha}\tau)+r(\nu^{\alpha}\tau)=r(\pi_{i}).
\end{equation*}
Thus we have $r({\rm bc}_{E/F}(\pi_{i}))=r(\pi_{i})$ for every $i=1,\dots,k$.

By the hereditary property of Klyachko models (\cite[Proposition 13.3]{MOS}) we get
\begin{equation*}
r({\rm bc}_{E/F}(\pi))=\sum_{i=1}^{k}r({\rm bc}_{E/F}(\pi_{i}))=\sum_{i=1}^{k}r(\pi_{i})=r(\pi).
\end{equation*}
\end{proof}

\section{Fiber under the two maps}\label{s:fiber}

We now investigate the fibers of the base change and automorphic induction maps. We begin by explicitly describing the fiber of an arbitrary rigid representation in the image.

\subsection{Description of the fiber under a rigid representation}
\subsubsection{}
\begin{lemma}\label{fiber:maps}
\begin{enumerate}
\item Suppose that $\Pi=L(\mathfrak m)$ be such that $\supp (\Pi)\subset {\sigma}^{\mathbb Z}$ for some $\sigma\in \aaa_{E}^{\circ}$. Let $\Pi$ be in the image of the map ${\rm bc}_{E/F}$. Let $\kappa$ be a character of $F^{\times}$ with kernel equal to ${\rm N}_{E/F}(E^{\times})$. Then there exists $\sigma'\in \aaa_{F}^{\circ}$ such that ${\rm bc}_{E/F}(\sigma')=\sigma$ and the fiber ${\rm bc}_{E/F}^{-1}(\Pi)$ consists of all the representations of the form 
\begin{equation}\label{eq_fiber:maps1}
L(\mathfrak m_{1})\times \kappa L(\mathfrak m_{2})\times \cdots \times\kappa^{d-1} L(\mathfrak m_{d})
\end{equation}
where the multi-sets $\mathfrak m_{i}$ are such that each $\mathfrak m_{i}\subset {\sigma'}^{\mathbb Z}$ and $\mathfrak m_{1}+\cdots+\mathfrak m_{d}=\mathfrak m_{(\sigma')}$ (see \S \ref{def: cusp supp} for the notation). (Some of the $\mathfrak m_{i}$ can possibly be empty.)

\item Suppose that $\Pi=L(\mathfrak m)$ be such that $\supp (\Pi)\subset {\sigma}^{\mathbb Z}$ for some $\sigma\in \aaa_{F}^{\circ}$. Let $\Pi$ be in the image of the map ${\rm ai}_{E/F}$. Let $\gamma$ be a fixed non-trivial element of  ${\rm Gal}(E/F)$. Then there exists $\sigma'\in \aaa_{E}^{\circ}$ such that ${\rm ai}_{E/F}(\sigma')=\sigma$ and the fiber ${\rm ai}_{E/F}^{-1}(\Pi)$ consists of all the representations of the form 
\begin{equation*}
L(\mathfrak m_{1})\times L(\mathfrak m_{2})\times \cdots \times L(\mathfrak m_{d})
\end{equation*}
where the multi-sets $\mathfrak m_{i}$ are such that $\mathfrak m_{i}\subset ((\sigma')^{\gamma^{i}})^{\mathbb Z}$ for each $i$ and $(\mathfrak m_{1})_{(\sigma)}+\cdots+(\mathfrak m_{d})_{(\sigma)}=\mathfrak m$ (see \S \ref{def: cusp supp} for the notation). (Some of the $\mathfrak m_{i}$ can possibly be empty.)
\end{enumerate}
\end{lemma}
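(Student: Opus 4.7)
The plan is to translate the problem via the reciprocity map, so that base change becomes restriction of Weil--Deligne representations from $W_F$ to $W_E$ and automorphic induction becomes ordinary induction; the main classical inputs then become Lemma~\ref{cuspfac} and Lemma~\ref{strucpres}. The proofs of (1) and (2) are strictly parallel: in the automorphic induction setting the role of $\kappa$-twists on $\mathcal{A}_F^{\circ}$ is played by the ${\rm Gal}(E/F)$-action on $\mathcal{A}_E^{\circ}$, because by local class field theory the characters of $W_F$ trivial on $W_E$ are exactly the powers of $\kappa$, while the Galois orbit governs which cuspidals of $\mathcal{A}_E^{\circ}$ share an image under automorphic induction. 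I sketch part (1); part (2) is entirely analogous.

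The first step is to determine the fiber of the single cuspidal $\sigma$ under ${\rm bc}_{E/F}$. Applying Lemma~\ref{strucpres}(1) to the rigid components of any preimage of $\Pi$, and absorbing an unramified twist, produces some $\sigma'\in\mathcal{A}_F^{\circ}$ with ${\rm bc}_{E/F}(\sigma')=\sigma$. Any other cuspidal with base change $\sigma$ is then of the form $\kappa^i\sigma'$, since two rec images on $W_F$ whose restrictions to $W_E$ agree must differ by a character of $W_F/W_E$. The $d$ representations $\kappa^i\sigma'$, $0\le i\le d-1$, are moreover pairwise non-isomorphic and lie on $d$ pairwise distinct cuspidal lines: if $\kappa^i\sigma'\cong\sigma'$ for some $0<i<d$, then by primality of $d$ we would have ${\rm rec}_F(\sigma')\otimes\kappa\cong{\rm rec}_F(\sigma')$, forcing ${\rm rec}_F(\sigma')$ to be induced from $W_E$ and hence ${\rm rec}_E(\sigma)={\rm rec}_F(\sigma')|_{W_E}$ to be reducible, contradicting the irreducibility of $\sigma$; the statement about cuspidal lines follows by a similar restriction argument after further twisting by $\nu_F$.

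For the forward inclusion of the claimed description of ${\rm bc}_{E/F}^{-1}(\Pi)$, fix multi-sets $\mathfrak{m}_1,\dots,\mathfrak{m}_d\subset(\sigma')^{\mathbb{Z}}$ with $\mathfrak{m}_1+\cdots+\mathfrak{m}_d=\mathfrak{m}_{(\sigma')}$. The representation in \eqref{eq_fiber:maps1} is irreducible because its factors live on the $d$ disjoint cuspidal lines from the previous step. Using the formula \eqref{eq:rec} together with the identities $\kappa|_{W_E}=\mathbf{1}$ and $\nu_F'|_{W_E}=\nu_E'$, each segment $\kappa^{i-1}\Delta$ appearing in $\pi$ contributes a Weil--Deligne summand that restricts to $\tau(\Delta'_{(\sigma)})$; assembling all segments shows that ${\rm rec}_E({\rm bc}_{E/F}(\pi))$ agrees with ${\rm rec}_E(\Pi)$, proving ${\rm bc}_{E/F}(\pi)=\Pi$.

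For the converse, let $\pi\in{\rm bc}_{E/F}^{-1}(\Pi)$ and write $\pi=\pi_1\times\cdots\times\pi_r$ as its unique decomposition into rigid constituents on pairwise distinct cuspidal lines $(\tau_j)^{\mathbb{Z}}$. Lemma~\ref{strucpres}(1) together with Lemma~\ref{cuspfac}(1) shows that ${\rm bc}_{E/F}(\pi_j)$ has cuspidal support exactly on the lines of the factors of ${\rm bc}_{E/F}(\tau_j)$; since $\Pi$ is rigid on $\sigma^{\mathbb{Z}}$ while distinct factors appearing in ${\rm bc}_{E/F}(\tau_j)$ are pairwise unrelated by unramified twist, each ${\rm bc}_{E/F}(\tau_j)$ must itself be irreducible and equal an unramified twist of $\sigma$. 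The first step then places each $\tau_j$ on one of the $d$ lines $(\kappa^{i-1}\sigma')^{\mathbb{Z}}$, and re-indexing yields the form claimed in \eqref{eq_fiber:maps1}; the Weil--Deligne computation of the previous paragraph then forces $\mathfrak{m}_1+\cdots+\mathfrak{m}_d=\mathfrak{m}_{(\sigma')}$. I expect the main obstacle to be this last piece of bookkeeping -- showing that no rigid component $\pi_j$ can have a base change ``split'' across multiple cuspidal lines in $\mathcal{A}_E^{\circ}$; once that is in place, the description of the fiber is a direct unwinding of the reciprocity map.
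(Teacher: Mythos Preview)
Your proof is correct and takes essentially the same approach as the paper's: pass to the Weil--Deligne side, use rigidity of $\Pi$ together with Lemma~\ref{cuspfac} to rule out any cuspidal in $\supp(\pi)$ whose base change splits, and then match segments via the Clifford-theoretic description ${\rm bc}_{E/F}^{-1}(\sigma)=\{\kappa^i\sigma':0\le i\le d-1\}$. The only cosmetic differences are that the paper argues directly at the level of the segments $\Delta_i'$ of $\pi$ (showing each $(\rho(\Delta_i')|_{W_E},N(\Delta_i'))$ stays indecomposable) rather than first passing through the rigid decomposition $\pi=\pi_1\times\cdots\times\pi_r$, and that it defers the pairwise non-isomorphism of the $\kappa^i\sigma'$ to Remark~\ref{rem_fiber:maps} rather than proving it inside the lemma.
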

\begin{proof}
Note first that if $\sigma$ lies in the image of the base change map, then by the local Langlands correspondence there exists a $\sigma'\in \aaa_{F}^{\circ}$ such that $\rec(\sigma')|_{W_{E}}=\rec(\sigma)$. Fix such a $\sigma'$. In this case, by Clifford theory, we have 
\begin{equation}\label{eq_fiber:maps2}
{\rm bc}_{E/F}^{-1}(\sigma)=\{\kappa^{i}\sigma'\mid i=0,\dots,d-1\}.
\end{equation}
Let $\pi\in {\rm bc}_{E/F}^{-1}(\Pi)$. Write $\Pi=L(\Delta_{1},\dots,\Delta_{t})$ and $\pi=L(\Delta_{1}',\dots, \Delta_{t'}')$. Thus ${\rm rec}(\pi)=\oplus_{i=1}^{t'}(\rho(\Delta'_{i}), N(\Delta_{i}'))$. If there exists a $\sigma''\in \supp (\pi)$ such that ${\rm bc}_{E/F}(\sigma'')$ is not cuspidal, then we get a contradiction to the rigidity of $\Pi$ (using Lemma \ref{cuspfac}). In other words, $(\rho(\Delta'_{i})|_{W_{E}}, N(\Delta_{i}'))$ is an indecomposable Weil-Deligne representation of $W_{E}$ for every $i$. Thus $t=t'$ and, renumbering the segments if necessary, we can assume that $(\rho(\Delta'_{i})|_{W_{E}}, N(\Delta_{i}'))=(\rho(\Delta_{i}), N(\Delta_{i}))$. Hence the segments $\Delta$ and $\Delta'$ are of same length, and if $\Delta_{i}=[\nu_{E}^{a_{i}}\sigma,\nu_{E}^{b_{i}}\sigma]$, then $\Delta_{i}'$ can be written as $[\nu_{F}^{a_{i}}\sigma_{i},\nu_{F}^{b_{i}}\sigma_{i}]$ where $\rec(\sigma_{i})|_{W_{E}}=\rec(\sigma)$. Therefore ${\rm bc}_{E/F}(\sigma_{i})=\sigma$, and as noted in eq.(\ref{eq_fiber:maps2}), this implies that $\sigma_{i}=\kappa^{k_{i}}\sigma'$ for some integer $k_{i}$. Thus we have that $\pi$ is a representation of the form described in eq.(\ref{eq_fiber:maps1}). 

The converse statement that every representation in $\aaa_{F}$ of the form described in eq.(\ref{eq_fiber:maps1}) lies in ${\rm bc}_{E/F}^{-1}(\Pi)$ follows directly from the definition of base change and the observation in eq.(\ref{eq_fiber:maps2}). 
\end{proof}

\subsubsection{}
\begin{remark}\label{rem_fiber:maps}
Note that in the statement of Lemma \ref {fiber:maps}(1), we have $\sigma'\ncong \kappa^{i}\sigma'$ for any $i\in \{1,\dots, d-1\}$. Indeed otherwise, by local Langlands correspondence and Clifford theory, this contradicts the fact that $\Pi$ is rigid. Similarly, in the statement of Lemma \ref {fiber:maps}(2), we have $\sigma'\ncong (\sigma')^{\gamma^{i}}$ for any $i\in \{1,\dots, d-1\}$. 

In particular, if $s$ is the size of the multi-set $\mathfrak m$, then the cardinality of the set ${\rm bc}_{E/F}^{-1}(\Pi)$ and ${\rm ai}_{E/F}^{-1}(\Pi)$ in the respective situations is $d^{s}$.
\end{remark}

\subsubsection{The case of generic rigid representations}
\begin{lemma}\label{lem_fiber:maps_gen}
\begin{enumerate}
\item Suppose that $\Pi=L(\mathfrak m)$ be rigid and in the image of the base change map.  Then $\Pi$ is generic if and only if every representation in ${\rm bc}_{E/F}^{-1}(\Pi)$ is generic. 
\item Suppose that $\Pi=L(\mathfrak m)$ be rigid and in the image of the automorphic induction map. Then $\Pi$ is generic if and only if every representation in ${\rm ai}_{E/F}^{-1}(\Pi)$ is generic. 
\end{enumerate}
\end{lemma}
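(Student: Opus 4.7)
The plan is to combine the explicit description of the fiber furnished by Lemma \ref{fiber:maps} with the classical combinatorial characterization of generic representations due to Zelevinsky (see \cite{Z}): an irreducible representation $L(\mathfrak n)$ of $\GL_{m}(F)$ is generic if and only if the segments constituting $\mathfrak n$ are pairwise unlinked, or equivalently, if and only if the induced representation $L(\Delta_{1}) \times \cdots \times L(\Delta_{t})$ is already irreducible. A key observation to leverage is that linkedness depends only on the integer exponents $a_{i}, b_{i}$ of the segments and is insensitive to the underlying cuspidal representation; in particular, $\mathfrak m$ consists of pairwise unlinked segments if and only if $\mathfrak m_{(\sigma')}$ does, for any $\sigma' \in \aaa_{F}^{\circ}$.

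I will carry out the base change case; the automorphic induction case will proceed by a verbatim translation using Lemma \ref{fiber:maps}(2) together with the disjointness of the cuspidal lines $((\sigma')^{\gamma^{i}})^{\mathbb{Z}}$ noted in Remark \ref{rem_fiber:maps}. By Lemma \ref{fiber:maps}(1), every $\pi \in {\rm bc}_{E/F}^{-1}(\Pi)$ takes the form
\[
\pi = L(\mathfrak m_{1}) \times \kappa L(\mathfrak m_{2}) \times \cdots \times \kappa^{d-1} L(\mathfrak m_{d})
\]
with $\mathfrak m_{1} + \cdots + \mathfrak m_{d} = \mathfrak m_{(\sigma')}$. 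The factors are supported on pairwise disjoint cuspidal lines by Remark \ref{rem_fiber:maps}, so two segments belonging to distinct factors are automatically unlinked. Consequently $\pi$ will be generic precisely when each individual $\mathfrak m_{i}$ consists of pairwise unlinked segments.

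From here the two directions will be immediate. For the forward direction, suppose $\Pi$ is generic; then $\mathfrak m_{(\sigma')}$ has pairwise unlinked segments, and since unlinkedness is inherited by every sub-multi-set, each $\mathfrak m_{i}$ enjoys the same property, so every $\pi$ in the fiber is generic. For the converse, I will exhibit one distinguished element of the fiber whose genericity forces that of $\Pi$: take $\pi_{0} = L(\mathfrak m_{(\sigma')})$, corresponding to $\mathfrak m_{1} = \mathfrak m_{(\sigma')}$ and $\mathfrak m_{j} = \emptyset$ for $j \geq 2$; its membership in the fiber is immediate from Lemma \ref{strucpres}(1) applied to $\sigma'$ with ${\rm bc}_{E/F}(\sigma') = \sigma$. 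If $\pi_{0}$ is generic, then $\mathfrak m_{(\sigma')}$, and hence $\mathfrak m$, is pairwise unlinked, whence $\Pi$ is generic. I do not foresee a substantial obstacle; the argument reduces to two elementary observations, namely that linkedness is a condition on exponents alone and that unlinkedness passes to sub-multi-sets.
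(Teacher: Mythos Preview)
Your proposal is correct and follows essentially the same approach as the paper: both invoke Zelevinsky's characterization of genericity (\cite[Theorem 9.7]{Z}) in terms of unlinked segments, combine it with the fiber description of Lemma \ref{fiber:maps}, and for the converse appeal to a rigid element of the fiber. Your write-up is slightly more explicit in naming the rigid element $\pi_{0}=L(\mathfrak m_{(\sigma')})$, whereas the paper simply says ``applying \cite[Theorem 9.7]{Z} to a rigid representation in the fiber,'' but the substance is identical.
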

\begin{proof}
Suppose first that $\Pi$ is generic. Let $\mathfrak m=\{\Delta_{1},\dots, \Delta_{s}\}\in \sigma^{\mathbb Z}$. Fix a $\sigma'\in {\rm bc}_{E/F}^{-1}(\sigma)$. By \cite[Theorem 9.7]{Z}, we have $\Delta_{i}\nprec\Delta_{j}$ for every $i$ and $j$. Thus $\kappa^{k_{i}}(\Delta_{i})_{(\sigma')}\nprec\kappa^{k_{j}}(\Delta_{j})_{(\sigma')}$ for any integers $k_{i}$ and $k_{j}$, and so by \cite[Theorem 9.7]{Z} and Lemma \ref{fiber:maps}, every representation in ${\rm bc}_{E/F}^{-1}(\Pi)$ is generic. The converse is obtained in a similar manner by applying \cite[Theorem 9.7]{Z} to a rigid representation in the fiber.
\end{proof}

\subsubsection{}The following result is an immediate corollary of Lemma \ref{fiber:maps}:
\begin{corr}\label{corr_fiber:maps}
\begin{enumerate}
\item Let $\Pi\in \mathcal L_{E}$. Then ${\rm bc}_{E/F}^{-1}(\Pi)\subset \mathcal L_{{\rm ind},F}$. 
\item Let $\Pi\in \mathcal L_{F}$. Then ${\rm ai}_{E/F}^{-1}(\Pi)\subset \mathcal L_{{\rm ind},E}$.  
\end{enumerate}
\qed
\end{corr}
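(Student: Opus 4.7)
The plan is to reduce both statements directly to Lemma \ref{fiber:maps}; no new machinery is needed. For part (1), let $\Pi=L(\mathfrak m)\in \mathcal L_E$ and pick $\sigma\in \aaa_E^\circ$ so that $\supp(\Pi)\subset \sigma^{\mathbb Z}$. By Lemma \ref{fiber:maps}(1), any $\pi\in{\rm bc}_{E/F}^{-1}(\Pi)$ has the form
\begin{equation*}
\pi = L(\mathfrak m_1)\times \kappa L(\mathfrak m_2)\times\cdots\times\kappa^{d-1}L(\mathfrak m_d)
\end{equation*}
with $\mathfrak m_i\subset (\sigma')^{\mathbb Z}$ and $\mathfrak m_1+\cdots+\mathfrak m_d=\mathfrak m_{(\sigma')}$, for a suitable $\sigma'\in\aaa_F^\circ$ with ${\rm bc}_{E/F}(\sigma')=\sigma$.

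First I would observe that each $\mathfrak m_i$ is itself a ladder: since $\mathfrak m_{(\sigma')}=\{[a_1,b_1],\dots,[a_k,b_k]\}_{(\sigma')}$ has strictly decreasing sequences $a_1>\cdots>a_k$ and $b_1>\cdots>b_k$, any sub-multi-set inherits these strict inequalities. Hence each $L(\mathfrak m_i)$ (and thus each $\kappa^{i-1}L(\mathfrak m_i)$) is a ladder representation in $\mathcal L_F$.

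Next I would verify irreducibility of the induced product. By Remark \ref{rem_fiber:maps}, $\sigma'\ncong\kappa^i\sigma'$ for $1\le i\le d-1$. Since $\kappa$ has finite order (its kernel $N_{E/F}(E^\times)$ has index $d$ in $F^\times$) while $\nu_F^\alpha$ has infinite order for $\alpha\neq 0$, no relation of the form $\kappa^{i-j}\sigma'\cong \nu_F^\alpha\sigma'$ with $\alpha\ne 0$ can hold for $1\le i\ne j\le d$. Consequently the cuspidal lines $(\kappa^{i-1}\sigma')^{\mathbb Z}$ are pairwise disjoint, and so each pair of factors $\kappa^{i-1}L(\mathfrak m_i)$ and $\kappa^{j-1}L(\mathfrak m_j)$ has cuspidal supports on disjoint cuspidal lines. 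The standard irreducibility criterion (cf.\ \cite[Proposition 8.5]{Z}) then forces $\pi$ to be irreducible, so $\pi\in\mathcal L_{{\rm ind},F}$.

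Part (2) is handled in exactly the same manner using Lemma \ref{fiber:maps}(2) and the second half of Remark \ref{rem_fiber:maps}: the relevant twists are by the Galois action $\sigma'\mapsto(\sigma')^{\gamma^i}$, which again lie on pairwise disjoint cuspidal lines by the remark, and the sub-multi-set argument goes through verbatim. There is no serious obstacle here; the proof is purely a matter of unpacking Lemma \ref{fiber:maps} and noting the elementary facts that sub-multi-sets of ladders remain ladders and that ladders on disjoint cuspidal lines induce irreducibly.
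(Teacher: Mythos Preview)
Your proposal is correct and is exactly what the paper has in mind: the paper records this result as ``an immediate corollary of Lemma~\ref{fiber:maps}'' and gives no further argument, so your write-up simply makes explicit the two observations behind that sentence (sub-multi-sets of a ladder are ladders, and the factors live on disjoint cuspidal lines). One small simplification: you do not actually need a separate irreducibility argument, since any $\pi\in{\rm bc}_{E/F}^{-1}(\Pi)$ lies in $\mathcal A_F$ by definition and hence is already irreducible; once Lemma~\ref{fiber:maps} exhibits it as a product of ladders, membership in $\mathcal L_{{\rm ind},F}$ is automatic.
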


\subsection{Estimates for Klyachko types}
\subsubsection{}
\begin{defn}\label{defn_fiber:maps}
\begin{enumerate}
\item Suppose $\Pi={\rm bc}_{E/F}(\pi)$ for some $\pi\in\aaa_{F}(n)$, is rigid, and admits a Klyachko model. Set $\mathcal H_{\pi,r}=\Hom_{H_{2k,r}}(\pi,\psi)$ (where $2k+r=n$). Define 
\begin{equation*}
d_{\Pi,{\rm bc}}=\sum _{\pi\in {\rm bc}_{E/F}^{-1}(\Pi)}\dim_{\mathbb C}(\mathcal H_{\pi,r(\Pi)}).
\end{equation*}
\item Suppose $\Pi={\rm ai}_{E/F}(\pi)$ for some $\pi\in\aaa_{E}(n)$, is rigid, and admits a Klyachko model. Set $\mathcal H_{\pi,r}=\Hom_{H_{2k,r}}(\pi,\psi)$ (where $2k+r=n$). Define 
\begin{equation*}
d_{\Pi,{\rm ai}}=\sum _{\pi\in {\rm ai}_{E/F}^{-1}(\Pi)}\dim_{\mathbb C}(\mathcal H_{\pi,\frac{r(\Pi)}{d}}).
\end{equation*}
\end{enumerate}
\end{defn}
Since an irreducible representation admits a Klyachko model with multiplicity at most one (\cite[Theorem 1]{OS2}), the integer $d_{\Pi,{\rm bc}}$ (or $d_{\Pi,{\rm ai}}$) is equal to the number of elements in the preimage of the respective maps which have the corresponding Klyachko type. For example if $\Pi=L(\mathfrak m)$ is a rigid generic representation in the image of the base change map, then by Lemma \ref{lem_fiber:maps_gen} and Remark \ref{rem_fiber:maps}, $d_{\Pi,{\rm bc}}=d^{s}$ where $s$ is the size of the multi-set $\mathfrak m$. 
\begin{remark}
Note that every cuspidal representation in the support of the representation $\Pi$ in Definition \ref{defn_fiber:maps}(2) lies in $\aaa_{F}(m)$ for some $m$ such that $d|m$. This fact along with Frobenius reciprocity implies that the Klyachko type of $\Pi$ has to be divisible by $d$. 
\end{remark}

\subsubsection{}
\begin{lemma}\label{lem_fiber:maps_symp}
\begin{enumerate}
\item Suppose that $\Pi=L(\mathfrak m)\in \mathcal L_{E}\cap \aaa_{E}(2n)$ such that $\Pi$ admits the symplectic model.  Further suppose that $\Pi$ is in the image of the base change map. Denote by $s$ the size of the multi-set $\mathfrak m$. Then
\begin{equation*}
d_{\Pi,{\rm bc}}=d^{\frac{s}{2}}.
\end{equation*}
\item Suppose that $\Pi=L(\mathfrak m)\in \mathcal L_{F}\cap \aaa_{F}(2dn)$ such that $\Pi$ admits the symplectic model. Further suppose that $\Pi$ is in the image of the automorphic induction map. Denote by $s$ the size of the multi-set $\mathfrak m$. Then
\begin{equation*}
d_{\Pi,{\rm ai}}=d^{\frac{s}{2}}.
\end{equation*}
\end{enumerate}
\end{lemma}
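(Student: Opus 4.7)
The plan is to reduce the symplectic model count to a combinatorial enumeration of decompositions of the segment multi-set $\mathfrak{m}_{(\sigma')}$. By Lemma \ref{fiber:maps}(1), every element of ${\rm bc}_{E/F}^{-1}(\Pi)$ has the form $L(\mathfrak{m}_{1})\times \kappa L(\mathfrak{m}_{2})\times \cdots \times \kappa^{d-1}L(\mathfrak{m}_{d})$ for some decomposition $\mathfrak{m}_{(\sigma')}=\mathfrak{m}_{1}+\cdots+\mathfrak{m}_{d}$; by Remark \ref{rem_fiber:maps} the factors $\kappa^{i-1}L(\mathfrak{m}_{i})$ are supported on pairwise disjoint cuspidal lines, so \cite[Proposition 13.3 and Proposition 13.4]{MOS} imply that the product admits the symplectic model if and only if each $L(\mathfrak{m}_{i})$ does. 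Since an irreducible representation admits a Klyachko model with multiplicity at most one (\cite[Theorem 1]{OS2}), $d_{\Pi,{\rm bc}}$ equals the number of such decompositions.

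Applying Theorem \ref{thm: kly lad}(2) to $\Pi$ together with the symplectic-model criterion of Theorem \ref{thm: kly lad}(1), the ladder $\mathfrak{m}_{(\sigma')}$ carries a canonical matching: each proper-ladder component contains an even number of segments, and inside it consecutive segments $(\Delta_{2j-1},\Delta_{2j})$ satisfy $\Delta_{2j-1}=\nu \Delta_{2j}$. Call the resulting $s/2$ pairs $P_{1},\dots,P_{s/2}$. The core claim is that the valid decompositions are precisely the \emph{pair-respecting} ones, namely those in which each $P_{j}$ lies entirely in some $\mathfrak{m}_{i}$; since there are exactly $d^{s/2}$ pair-respecting decompositions (one choice of index $i$ per pair), this proves the lemma.

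To see that a pair-respecting decomposition is valid, note that the two segments of a canonical pair differ by $\nu$ and are therefore always linked, so no split in the proper-ladder decomposition of $\mathfrak{m}_{i}$ can occur inside a pair. Thus every proper-ladder component of $\mathfrak{m}_{i}$ is a concatenation of consecutive canonical pairs, and Theorem \ref{thm: kly lad}(1) gives the symplectic model for $L(\mathfrak{m}_{i})$. For the converse I will induct on $s$. Let $\Delta_{1}$ be the leftmost segment of $\mathfrak{m}_{(\sigma')}$ (the one with the largest $a$-value) and suppose $\Delta_{1}\in \mathfrak{m}_{i}$. Then $\Delta_{1}$ is also the leftmost segment of $\mathfrak{m}_{i}$ and starts the first proper-ladder component of $\mathfrak{m}_{i}$; Theorem \ref{thm: kly lad}(1) forces the first pair of that component to be $(\Delta_{1},\nu^{-1}\Delta_{1})=(\Delta_{1},\Delta_{2})$, hence $\Delta_{2}\in \mathfrak{m}_{i}$. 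Deleting $P_{1}=\{\Delta_{1},\Delta_{2}\}$ from $\mathfrak{m}_{(\sigma')}$ and from $\mathfrak{m}_{i}$ yields a rigid sub-ladder $\mathfrak{m}'$ of size $s-2$ which still admits the symplectic model (its proper-ladder components shrink by two from the top but keep their parity and the $\nu$-shift pattern), together with a valid decomposition of $\mathfrak{m}'$. The induction hypothesis provides $d^{(s-2)/2}$ such decompositions, and multiplying by the $d$ choices for the index holding $P_{1}$ gives $d^{s/2}$.

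The main obstacle is the structural bookkeeping in the induction: when $P_{1}$ is excised from $\mathfrak{m}_{i}$, its first proper-ladder component either shrinks by two or disappears entirely, and one must confirm that validity is preserved in both cases, as well as when $P_{1}$ is reinserted into any $\mathfrak{m}_{i}$ of a valid decomposition of $\mathfrak{m}'$. All these checks reduce to the single observation used above, that splits in any proper-ladder decomposition occur only between canonical pairs, never inside one; with this in hand the induction closes. Part (2) then follows by the verbatim translation principle of the remark following Lemma \ref{bcprod}, together with Lemma \ref{fiber:maps}(2).
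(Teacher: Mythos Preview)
Your proposal is correct and follows essentially the same route as the paper's proof: reduce to decompositions of $\mathfrak{m}_{(\sigma')}$ via Lemma \ref{fiber:maps}, show that a decomposition yields a symplectic preimage if and only if it is pair-respecting (using Theorem \ref{thm: kly lad} and the hereditary properties for disjoint supports), and count $d^{s/2}$ such decompositions. The paper cites \cite[Lemma 5.9]{MOS} rather than \cite[Proposition 13.4]{MOS} for the ``product symplectic $\Rightarrow$ each factor symplectic'' step and compresses your induction into a one-line sequential argument, but these are differences of presentation, not of substance.
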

\begin{proof}
Since $r(\Pi)=0$, by Theorem \ref{thm: kly lad} $s$ is even. Let $s=2s'$ and let $\mathfrak m=\{\Delta_{1},\dots, \Delta_{2s'}\}$. Appealing to Theorem \ref{thm: kly lad} again, we get that $\Delta_{2j+1}=\nu\Delta_{2j+2}$ for every $j=0,\dots, s'-1$.  Let $\pi\in {\rm bc}_{E/F}^{-1}(\Pi)$. Then by Lemma \ref {fiber:maps}(1) $\pi$ is a representation of the form described in eq.(\ref{eq_fiber:maps1}), and by \cite[Lemma 5.9]{MOS}, the representations $\kappa^{i-1}L(\mathfrak m_{i})$ has a symplectic model for every $i$, if $\pi$ does so. Theorem \ref{thm: kly lad} applied to each $\kappa^{i-1}L(\mathfrak m_{i})$ gives us that if $(\Delta_{1})_{(\sigma')}\in\mathfrak m_{i}$, then $\nu_{F}^{-1}(\Delta_{1})_{(\sigma')}\in\mathfrak m_{i}$ as well.  Since $\mathfrak m$ is a ladder multi-set, the segment $\nu_{F}^{-1}(\Delta_{1})_{(\sigma')}$ can only be equal to $(\Delta_{2})_{(\sigma')}$. An easy induction gives us that for all $j=1,\dots,s'$ if $(\Delta_{2j-1})_{(\sigma')}\in\mathfrak m_{i}$ for some $i$, then $(\Delta_{2j})_{(\sigma')}\in\mathfrak m_{i}$ as well.  Thus any multi-set $\mathfrak m_{i}$ is of the form 
\begin{equation*}
\{\Delta_{2j_{1}+1},\Delta_{2j_{1}+2},\dots, \Delta_{2j_{a}+1},\Delta_{2j_{a}+2}\}_{(\sigma')}
\end{equation*}
for some distinct integers $\{j_{1},j_{2},\dots,j_{a}\}\subset \{0,\dots, s'-1\}$. By the observation in Remark \ref{rem_fiber:maps}, we get that $d_{\Pi,{\rm bc}}=d^{\frac{s}{2}}$.
\end{proof}

\subsubsection{} Thus for a ladder representation $\Pi$ having Whittaker or symplectic models, the integers $d_{\Pi,{\rm bc}}$ and $d_{\Pi,{\rm ai}}$ depend only on the degree of the field extension and the size of the underlying multi-set when $\Pi$ is expressed using the Langlands classification. For other Klyachko models this is not the case. For example consider the ladder representations $\Pi_{1}=L([\nu_{E}^{2},\nu_{E}^{3}],[\nu_{E},\nu_{E}^{2}],[1,\nu_{E}])$ and $\Pi_{2}=L([\nu_{E}^{3},\nu_{E}^{4}],[\nu_{E},\nu_{E}^{2}],[1,\nu_{E}])$. By Theorem \ref{thm: kly lad} both $\Pi_{1}$ and $\Pi_{2}$ admit Klyachko models and $r(\Pi_{1})=r(\Pi_{2})=2$. It is easy to check using Lemma \ref{fiber:maps}, Theorem \ref{thm: kly lad}, and \cite[Proposition 13.4]{MOS} that $d_{\Pi_{1},{\rm bc}}> d_{\Pi_{2},{\rm bc}}$. Similar examples can be constructed in the case of automorphic induction as well. 

\subsubsection{}However we can say the following: 
\begin{lemma}\label{lem_fiber:maps_mixed}
\begin{enumerate}
\item Suppose that $\Pi=L(\mathfrak m)\in \mathcal L_{E}$ such that $\Pi$ admits a Klyachko model other than the Whittaker and the symplectic models. Further suppose that $\Pi$ is in the image of the base change map. Denote by $s$ the size of the multi-set $\mathfrak m$. Then
\begin{equation*}
d^{\frac{s}{2}} \leq d_{\Pi,{\rm bc}}.
\end{equation*}
\item Suppose that $\Pi=L(\mathfrak m)\in \mathcal L_{F}$ such that $\Pi$ admits a Klyachko model other than the Whittaker and the symplectic models. Further suppose that $\Pi$ is in the image of the automorphic induction map. Denote by $s$ the size of the multi-set $\mathfrak m$. Then
\begin{equation*}
d^{\frac{s}{2}} \leq d_{\Pi,{\rm ai}}.
\end{equation*}
\end{enumerate}
\end{lemma}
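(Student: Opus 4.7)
We sketch a proof of (1); part (2) follows by the analogous argument. The plan is to generalize the counting in the proof of Lemma \ref{lem_fiber:maps_symp}: rather than being \emph{forced} to keep the right-aligned pairs of the Langlands multi-set together (as the vanishing of the Klyachko type compelled in the symplectic case), we will \emph{voluntarily} keep them together and count the resulting fiber elements of the correct Klyachko type.

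First decompose $\Pi$ as $\pi_{1}\times\cdots\times\pi_{l}$, where each $\pi_{j}=L(\mathfrak m_{j})$ is a proper ladder via Proposition \ref{prop: prop}, so $\sum_{j}|\mathfrak m_{j}|=s$. Since $\Pi$ admits a Klyachko model, Theorem \ref{thm: kly lad}(2) implies that each $\pi_{j}$ does too, and Theorem \ref{thm: kly lad}(1) then supplies, for each $j$, a grouping of the segments of $\mathfrak m_{j}$ into right-aligned pairs together with a possibly unpaired top segment $\Delta_{1}^{(j)}$ when $t_{j}:=|\mathfrak m_{j}|$ is odd. Fix $\sigma'\in {\rm bc}_{E/F}^{-1}(\sigma)$. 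For any function that assigns a label from $\{1,\ldots,d\}$ to each right-aligned pair (across all $j$) and to each unpaired top, define multi-sets $\mathfrak M_{1},\ldots,\mathfrak M_{d}$ on the cuspidal line of $\sigma'$ by collecting into $\mathfrak M_{i}$ every segment carrying the label $i$. By Lemma \ref{fiber:maps}(1) the representation $L(\mathfrak M_{1})\times\kappa L(\mathfrak M_{2})\times\cdots\times\kappa^{d-1}L(\mathfrak M_{d})$ lies in ${\rm bc}_{E/F}^{-1}(\Pi)$.

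The key claim is that each $L(\mathfrak M_{i})$ still admits a Klyachko model and that these Klyachko types sum over $i$ to $r(\Pi)$. To verify this, observe that the unique decomposition of $\mathfrak M_{i}$ into proper ladders (Proposition \ref{prop: prop}) consists of chunks, each built from a maximal block of consecutively-indexed segments of some $\mathfrak m_{j}$ bearing the label $i$. Being a sub-multi-set of consecutive segments of a proper ladder, each such chunk is itself a proper ladder; since we have only taken full pairs (plus possibly the top), its pair structure in the inherited indexing matches a consecutive range of pairs of $\mathfrak m_{j}$, carrying the same right-alignment labels, so Theorem \ref{thm: kly lad}(1) guarantees that the chunk admits a Klyachko model. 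Tallying the pair-labels $r_{i}^{(j)}$ and the contributions $s^{(j)}$ from the unpaired tops, the total Klyachko type of the $L(\mathfrak M_{i})$'s works out to $\sum_{j}r(\pi_{j})=r(\Pi)$; the hereditary property of Klyachko models (\cite[Proposition 13.3]{MOS}) then concludes the verification.

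For each $\pi_{j}$ the $\lceil t_{j}/2\rceil$ objects to be labeled (the $\lfloor t_{j}/2\rfloor$ right-aligned pairs plus, when $t_{j}$ is odd, the unpaired top) can be independently assigned any of $d$ labels, and distinct labelings produce distinct tuples $(\mathfrak M_{1},\ldots,\mathfrak M_{d})$ by Remark \ref{rem_fiber:maps}, hence distinct fiber elements. The construction thus yields at least $\prod_{j}d^{\lceil t_{j}/2\rceil}\geq d^{s/2}$ elements of ${\rm bc}_{E/F}^{-1}(\Pi)$ of Klyachko type $r(\Pi)$, proving the bound. The main obstacle I foresee is the inheritance assertion from the preceding paragraph, namely that every sub-multi-set of consecutive segments of a proper ladder is itself a proper ladder carrying the inherited right-alignment labels; this should follow directly from the strict-decrease and $a_{i}\leq b_{i+1}+1$ conditions defining a proper ladder, after which the rest of the argument falls into place.
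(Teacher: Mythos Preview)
Your approach is exactly the paper's: distribute the right-aligned pairs (and any unpaired top) of the proper ladder pieces of $\Pi$ among the $d$ twist classes, then invoke Theorem~\ref{thm: kly lad} and heredity to see that each resulting fiber element has Klyachko type $r(\Pi)$, giving at least $\prod_j d^{\lceil t_j/2\rceil}\ge d^{s/2}$ such elements. The paper organizes this slightly differently (it treats proper ladders first and then reduces via ${\rm bc}_{E/F}^{-1}(\Pi)=\{\pi_1\times\cdots\times\pi_k:\pi_i\in{\rm bc}_{E/F}^{-1}(\Pi_i)\}$), but the content is the same.

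There is, however, a small slip in your verification of the ``key claim''. You assert that the proper ladder decomposition of each $\mathfrak{M}_i$ consists of maximal blocks of \emph{consecutively-indexed} segments of some $\mathfrak{m}_j$ bearing label $i$. This need not hold: if the pairs $\{\Delta_1,\Delta_2\}$ and $\{\Delta_5,\Delta_6\}$ of a six-segment proper ladder receive label $i$ while $\{\Delta_3,\Delta_4\}$ does not, then $\{\Delta_1,\Delta_2,\Delta_5,\Delta_6\}$ can still be a single proper ladder (it often is, e.g.\ for a Speh-like ladder). So your chunks are not always consecutive sub-blocks, and the sentence ``Being a sub-multi-set of consecutive segments of a proper ladder\ldots'' does not apply as written.

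The fix is easy and does not require consecutiveness. Each proper ladder piece of $\mathfrak{M}_i$ lies inside a single $\mathfrak{m}_j$ (breaks between the $\mathfrak{m}_j$ are genuine gaps $a_k>b_{k+1}+1$, hence persist in any sub-ladder). Within $\mathfrak{m}_j$ the segments of the chunk are a union of full original pairs together with, possibly, the original top $\Delta_1^{(j)}$; in particular the chunk has odd size exactly when it contains $\Delta_1^{(j)}$. The pairing prescribed by Theorem~\ref{thm: kly lad}(1), which pairs from the bottom up, therefore matches the original pairs segment-for-segment, so the required right-alignment relations $\vdash_{r}$ are inherited verbatim and the Klyachko types add up to $r(\Pi)$ as you want. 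With this parity observation in place, your argument goes through.
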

\begin{proof}
Let us first consider the case when $\Pi\in  \mathcal L_{p,E}$. 
Suppose first that $s=2s'+1$ for some integer $s'$ and let $\mathfrak m=\{\Delta_{1},\dots,\Delta_{2s'+1}\}$. 
Let $\pi$ be a representation of the form described in eq.(\ref{eq_fiber:maps1}) such that each $\mathfrak m_{i}$ be a multi-set of the form 
\begin{equation*}
\{\Delta_{2i_{1}},\Delta_{2i_{1}+1},\dots, \Delta_{2i_{a}},\Delta_{2i_{a}+1}\}_{(\sigma')}
\end{equation*}
or 
\begin{equation*}
\{\Delta_{1},\Delta_{2i_{1}},\Delta_{2i_{1}+1},\dots, \Delta_{2i_{a}},\Delta_{2i_{a}+1}\}_{(\sigma')}
\end{equation*}
for some distinct integers $\{i_{1},i_{2},\dots,i_{a}\}\subset \{1,\dots, s'\}$, and $\mathfrak m_{1}+\cdots+\mathfrak m_{d}=\mathfrak m_{(\sigma')}$. Note that by Lemma \ref{fiber:maps}(1), the representation $\pi\in {\rm bc}_{E/F}^{-1}(\Pi)$ while by Theorem \ref{thm: kly lad} and \cite[Proposition 13.4]{MOS} it admits a Klyachko model with $r(\pi)=r(\Pi)$. By the observation in Remark \ref{rem_fiber:maps}, we get that $d_{\Pi,{\rm bc}}\geq d^{s'+1}$ which proves the lemma when $s$ is an odd integer. The case when $s$ is even is dealt with similarly which proves the statement for all proper ladders.

Now let $\Pi\in \mathcal L_{E}$. Using Proposition \ref{prop: prop} write $\Pi=\Pi_{1}\times\cdots\times \Pi_{k}$ where each $\Pi_{i}\in \mathcal L_{p,E}$ for $1\leq i\leq k$. It follows easily from Lemma \ref{fiber:maps}(1) that 
\begin{equation}\label{eqn_pr_lad}
{\rm bc}_{E/F}^{-1}(\Pi)=\{\pi_{1}\times \cdots \times \pi_{k}\mid \pi_{i}\in {\rm bc}_{E/F}^{-1}(\Pi_{i}), \ \forall \ 1\leq i \leq k\}.
\end{equation}
Using eq.(\ref{eqn_pr_lad}) and Theorem \ref{thm: kly lad}(2) we get that $d_{\Pi,{\rm bc}}\geq \prod_{i=1}^{k}d_{\Pi_{i},{\rm bc}}$. The statement for proper ladders proved above now implies the statement for $\Pi\in \mathcal L_{E}$.
\end{proof}

\subsubsection{} We have the following theorem summarizing the results of this section.

\begin{theorem}\label{prop_fiber:maps_all}
\begin{enumerate}
\item Suppose that $\Pi=L(\mathfrak m)\in \mathcal L_{E}$ such that $\Pi$ admits a Klyachko model and is in the image of the base change map. Denote by $s$ the size of the multi-set $\mathfrak m$. Then the set ${\rm bc}_{E/F}^{-1}(\Pi)$ has cardinality $d^{s}$ and 
\begin{equation*}
\frac{1}{2} \leq \frac{{\rm log}_{d}(d_{\Pi,{\rm bc}})}{s}\leq 1.
\end{equation*}
If $\Pi$ has the symplectic model, then the first inequality is an equality. The second inequality is an equality if and only if $\Pi$ has the Whittaker model.
\item Suppose that $\Pi=L(\mathfrak m)\in \mathcal L_{F}$ such that $\Pi$ admits a Klyachko model and is in the image of the automorphic induction map. Denote by $s$ the size of the multi-set $\mathfrak m$. Then the set ${\rm ai}_{E/F}^{-1}(\Pi)$ has cardinality $d^{s}$ and 
\begin{equation*}
\frac{1}{2} \leq \frac{{\rm log}_{d}(d_{\Pi,{\rm ai}})}{s}\leq 1.
\end{equation*}
If $\Pi$ has the symplectic model, then the first inequality is an equality. The second inequality is an equality if and only if $\Pi$ has the Whittaker model.
\end{enumerate}
\qed
\end{theorem}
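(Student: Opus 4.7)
The proof is an assembly of the preceding lemmas in this section. The cardinality assertion $|{\rm bc}_{E/F}^{-1}(\Pi)| = d^s$ is Remark \ref{rem_fiber:maps}, and the upper bound $d_{\Pi,{\rm bc}} \leq d^s$ is tautological, since $d_{\Pi,{\rm bc}}$ counts a subset of the fiber (Klyachko types being unique by \cite[Theorem 1]{OS2}). The lower bound $d^{s/2} \leq d_{\Pi,{\rm bc}}$ is obtained by splitting into cases according to the Klyachko type of $\Pi$: if $\Pi$ is generic, Lemma \ref{lem_fiber:maps_gen} gives the stronger $d_{\Pi,{\rm bc}} = d^s$; if $\Pi$ admits the symplectic model, Lemma \ref{lem_fiber:maps_symp} gives the exact equality $d_{\Pi,{\rm bc}} = d^{s/2}$ (which is simultaneously the asserted equality in the lower bound for symplectic $\Pi$); in every remaining case, Lemma \ref{lem_fiber:maps_mixed} provides the inequality directly.

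For the equivalence ``$d_{\Pi,{\rm bc}} = d^s$ iff $\Pi$ is Whittaker'', one direction is immediate from Lemma \ref{lem_fiber:maps_gen}: if $\Pi$ is generic then every fiber element is generic, so each has Klyachko type equal to its dimension, which matches $r(\Pi) = \dim \Pi$, whence $d_{\Pi,{\rm bc}} = d^s$. For the converse, I would argue by contrapositive: assuming $\Pi$ is not generic, the goal is to exhibit at least one fiber element whose Klyachko type differs from $r(\Pi)$ (or which admits no Klyachko model at all), forcing $d_{\Pi,{\rm bc}} < d^s$. By \cite[Theorem 9.7]{Z} the multi-set $\mathfrak m$ then contains a pair of linked segments $\Delta_i, \Delta_j$; using the explicit parametrization of ${\rm bc}_{E/F}^{-1}(\Pi)$ in Lemma \ref{fiber:maps}, the plan is to construct a splitting $\mathfrak m_{(\sigma')} = \mathfrak m_1 \sqcup \cdots \sqcup \mathfrak m_d$ that places $\Delta_i$ and $\Delta_j$ in distinct bins while arranging at least one bin $\mathfrak m_k$ to inherit a linked sub-configuration that fails the right-alignment criterion of Theorem \ref{thm: kly lad}(1). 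By the hereditary property of Klyachko models on disjoint cuspidal lines (\cite[Proposition 13.4]{MOS}), the corresponding fiber element either admits no Klyachko model, or admits one whose type is strictly greater than $r(\Pi)$ (because the generic contribution from an isolated segment raises the total), in either case contradicting its membership in the count $d_{\Pi,{\rm bc}} = d^s$.

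The main obstacle is guaranteeing that such a ``structure-destroying'' splitting exists for every non-generic ladder, rather than only in easy illustrative cases. I expect a short combinatorial case analysis tracking the right-alignment relation $\vdash_r$: first reduce to a single proper ladder using Proposition \ref{prop: prop}, and then, for a proper ladder of length $t \geq 2$ admitting a Klyachko model through the pairings $(\Delta_{t-2i}, \Delta_{t-2i-1})$, separate a paired segment from its partner by assigning them to different bins. Inspecting the four possibilities for how the two indices $t-2i$ and $t-2i-1$ can be placed among the remaining segments, one verifies that in at least one assignment the bin containing the orphaned partner cannot be re-paired consistently with the $\vdash_r$ condition, producing the desired bad fiber element. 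The automorphic induction half of the theorem then follows by the verbatim translation of the above argument, in accordance with the convention established earlier in the paper.
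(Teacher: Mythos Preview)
Your assembly of the ingredients is exactly what the paper intends: the theorem carries only a \qed\ in the paper and is meant as a summary of Remark~\ref{rem_fiber:maps} (fiber cardinality), Lemma~\ref{lem_fiber:maps_gen} (the generic case), Lemma~\ref{lem_fiber:maps_symp} (the symplectic equality), and Lemma~\ref{lem_fiber:maps_mixed} (the lower bound in the mixed case), together with the trivial observation that $d_{\Pi,{\rm bc}}$ counts a subset of the fiber.

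Where you go beyond the paper is in justifying the converse of the Whittaker equivalence, namely that $d_{\Pi,{\rm bc}}=d^{s}$ forces $\Pi$ to be generic. You are right that this is not literally contained in the preceding lemmas, and supplying an argument is appropriate. Your plan of separating one member of a right-aligned pair from its partner is workable, but be aware that not every such separation produces a bad fiber element: for the Speh ladder $\Pi=L([2,3],[1,2],[0,1])$ with $r(\Pi)=2$, placing $\Delta_{3}$ alone in one bin and $\{\Delta_{1},\Delta_{2}\}$ in the other yields a fiber element that again has Klyachko type~$2$. The separation that succeeds is the other one, placing $\Delta_{2}$ alone and $\{\Delta_{1},\Delta_{3}\}$ together; then $\{\Delta_{1},\Delta_{3}\}_{(\sigma')}$ is a proper ladder of length two violating the right-alignment condition $b_{1}=b_{2}+1$, so by Theorem~\ref{thm: kly lad}(1) and \cite[Proposition 13.4]{MOS} that fiber element admits no Klyachko model at all. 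Your phrase ``in at least one assignment'' covers this, but the ``four possibilities'' you mention should really be read as the two choices of which partner to isolate, and the verification that one of them always breaks the $\vdash$-pairing (or visibly changes the type count) is the short case check you still owe. Once that is written out, your argument is complete and in fact more detailed than what the paper records.
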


\begin{remark}
It would be an interesting problem to find invariants for ladder representations, and more generally for rigid representations, which determine the integers $d_{\Pi,{\rm bc}}$ (resp., $d_{\Pi,{\rm ai}}$) completely for $\Pi$ with a given Klyachko type, and study their asymptotic behavior in the manner of Theorem \ref{prop_fiber:maps_all}.
\end{remark}

\subsubsection{Example of a Speh representation}
Let $\Pi=L(\mathfrak m)\in \aaa_{E}$ be a Speh representation in the image of the base change map and as above, let $|\mathfrak m|=s$. The simple structure of the representation allows us to obtain a precise value for the integer $d_{\Pi,{\rm bc}}$ in this case for any Klyachko model. If $s$ is even, then by Theorem \ref{thm: kly lad}(1) $\Pi$ has the symplectic model. Thus in this case $d_{\Pi,{\rm bc}}=d^{\frac{s}{2}}$ (by Lemma \ref{lem_fiber:maps_symp}(1)). So let $s=2s'+1$. Let $\Delta$ be an arbitrary segment in $\mathfrak m$ and let the integer $m$ be such that $L(\Delta)\in \aaa_{E}(m)$. Then applying Theorem \ref{thm: kly lad}(1) again we get that $\Pi$ admits a Klyachko model and $r(\Pi)=m$. Arguing as in the proof of Lemma \ref{lem_fiber:maps_symp}, in this case it is easy to see that 
\begin{equation*}
d_{\Pi,{\rm bc}}=(s'+1)d^{(s'+1)}-s'd^{s'}. 
\end{equation*}
For a Speh representation $\Pi\in \aaa_{F}$ in the image of the automorphic induction map, the integer $d_{\Pi,{\rm ai}}$ can be calculated similarly. 

\proof[Acknowledgements] 
The authors wish to thank Omer Offen for several helpful comments. The first author would also like to thank the Tata Institute of Fundamental Research, Mumbai, for providing an excellent work environment during which this work was completed.

\end{document}